\title{Local Framings}
\author{David Barnes}
\address{David Barnes \\ The University of Sheffield \\ School of Mathematics and Statistics \\ Hicks Building \\  Sheffield \\ S3 7RH \\ UK
                }
\email{D.J.Barnes@sheffield.ac.uk}
\thanks{The first author was supported by EPSRC grant EP/H026681/1} 
\author{Constanze Roitzheim}
\address{Constanze Roitzheim \\ University of Glasgow \\ Department of Mathematics \\ 15 University Gardens \\ Glasgow \\ G12 8QW \\ UK
        }
\email{constanze.roitzheim@glasgow.ac.uk}
\thanks{The second author was supported by EPSRC grant EP/G051348/1.}
\date{$20^\text{th}$ July 2011}
\DeclareMathOperator{\ev}{ev}
\DeclareMathOperator{\id}{id}
\DeclareMathOperator{\Ho}{Ho}
\DeclareMathOperator{\Hom}{Hom}
\DeclareMathOperator{\Map}{Map}
\DeclareMathOperator{\SSet}{sSet}
\DeclareMathOperator{\map}{map}
\DeclareMathOperator{\h}{H}
\DeclareMathOperator{\ch}{Ch}
\DeclareMathOperator{\Pic}{Pic}
\newcommand{\Sp}{\mathcal{S}}
\newcommand{\C}{\mathcal{C}}
\newcommand{\co}{\colon \!}
\newcommand{\lradjunction}{\,\,\raisebox{-0.1\height}{$\overrightarrow{\longleftarrow}$}\,\,}
\newcommand{\sset}{\SSet_*}
\newcommand{\ssetsmash}{\times}
\newcommand{\spectrasmash}{\wedge}
\newcommand{\Esmash}{\wedge^L_E}
\newcommand{\tensor}{\otimes}
\newcommand{\sstensor}{\wedge}
\newcommand{\Esframe}{\wedge^L_E}
\newcommand{\sframe}{\wedge}
\newtheorem{theorem}{Theorem}[section]
\newtheorem{proposition}[theorem]{Proposition}
\newtheorem{corollary}[theorem]{Corollary}
\newtheorem{lemma}[theorem]{Lemma}
\newtheorem{definition}[theorem]{Definition}
\newtheorem{ex}[theorem]{Example}
\newtheorem{rmk}[theorem]{Remark}
\newtheorem*{thm}{Theorem}
\begin{document}

\begin{abstract}

Framings provide a way to construct Quillen functors from simplicial sets to any given model category. A more structured set-up studies stable frames giving Quillen functors from spectra to stable model categories. We will investigate how this is compatible with Bousfield localisation to gain insight into the deeper structure of the stable homotopy category. We further show how these techniques relate to rigidity questions and how they can be used to study algebraic model categories. 

\end{abstract}

\maketitle

\section*{Introduction}

The two categories most important to homotopy theory are the stable homotopy category 
and the homotopy category of simplicial sets. 
It is very hard to study either of these categories, 
so a standard and highly successful method, known as Bousfield localisation, is often used. 
The idea is to look at `smaller pieces' of these categories.
These pieces have less information than the whole category, but are easier to work with as they are more structured.
To apply this method, one takes a homology theory $E_*$ and declares that
two simplicial sets (or two spectra) are equivalent if there is a map between them which induces an isomorphism
of $E_*$--homology. The resulting homotopy category is called the $E$--local homotopy category of simplicial sets
or the $E$--local stable homotopy category. 

There are many other model categories whose homotopy category behaves like 
a category of simplicial sets or spectra. 
The homotopy category of any pointed model category $\C$ is a 
closed module over the homotopy category of pointed simplicial sets, \cite{Hov99}. 
We show that this action extends to an action of the $E$--local
homotopy category of pointed simplicial sets if and only if the 
simplicial mapping spaces $\map(X,Y)$
are $E$--local simplicial sets for any $X$ and $Y$ in $\C$. 
We call such a model category $E$--familiar. 

If $\C$ is a pointed model category then there is a functor $\Sigma \co \Ho(\C) \to \Ho(\C)$, 
which corresponds to tensoring with the simplicial set $S^1$. If this functor is an
equivalence then the model category $\C$ is said to be stable. 
The work of \cite{Len11} shows that for stable $\C$, $\Ho(\C)$ has an action of the stable homotopy category. 
We have studied when this action is compatible with $E$--localisation 
and have the following characterisation of compatibility,
which is Theorem \ref{localmappingspectrum}.

\begin{thm}
If $\C$ is a stable model category, then the action 
of the stable homotopy category on $\Ho(\C)$ passes 
to an action of the $E$--local stable homotopy category 
if and only if the mapping spectra $\Map(X,Y)$
are $E$--local spectra for any $X$ and $Y$ in $\C$. 
\end{thm}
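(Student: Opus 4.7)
The plan is to run the same formal argument as in the unstable case, but with stable frames and mapping spectra replacing simplicial frames and simplicial mapping spaces. The stable frame machinery of Lenhardt gives, for each $X \in \C$, a Quillen adjunction
\[
- \sframe X \co \Sp \lradjunction \C \co \Map(X,-)
\]
between spectra and $\C$, inducing a derived adjunction $(- \Dsmash X, \Map(X,-))$ between $\Ho(\Sp)$ and $\Ho(\C)$; this is the source of the $\Ho(\Sp)$--action on $\Ho(\C)$. The key observation is that the $E$--local stable homotopy category $\Ho(L_E \Sp)$ is the localisation of $\Ho(\Sp)$ at the class of $E$--equivalences, so the action of $\Ho(\Sp)$ passes to an action of $\Ho(L_E \Sp)$ precisely when every $E$--equivalence of spectra $f \co A \to B$ induces, for every $X \in \C$, a weak equivalence $f \Dsmash X \co A \Dsmash X \to B \Dsmash X$ in $\Ho(\C)$.

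First I would establish this equivalent reformulation carefully, noting that by a standard Yoneda argument it suffices to test that $f \Dsmash X$ is an isomorphism in $\Ho(\C)$ on the functor $[-,Y]_{\C}$ for every $Y$. Then the derived adjunction converts this question into one about mapping spectra: $f \Dsmash X$ is an iso in $\Ho(\C)$ for all $X, Y$ if and only if the map
\[
[B, \Map(X,Y)]_{\Sp} \longrightarrow [A, \Map(X,Y)]_{\Sp}
\]
is a bijection for all $X, Y \in \C$ and all $E$--equivalences $A \to B$. By definition of $E$--local objects in $\Ho(\Sp)$, this is exactly the condition that each $\Map(X,Y)$ is an $E$--local spectrum. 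This gives both implications at once.

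For the forward direction one also needs the universal property of $L_E \Sp$: if the action factors through $\Ho(L_E \Sp)$, then $E$--equivalences act by isomorphisms, and the previous paragraph forces $\Map(X,Y)$ to be $E$--local. For the backward direction, $E$--locality of all mapping spectra makes every $E$--equivalence act as a weak equivalence on $\Ho(\C)$, so by the universal property the action descends uniquely through the localisation functor $\Ho(\Sp) \to \Ho(L_E \Sp)$; one then checks that the resulting functor still makes $\Ho(\C)$ into a closed module, which is automatic because the adjoints and coherence isomorphisms are inherited.

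I expect the main obstacle to be bookkeeping rather than conceptual: one must verify that Lenhardt's stable frame adjunction genuinely supplies a derived adjunction of the form required, and that the module-structure coherences (associativity with respect to smash product of spectra, unit, etc.) pass to the localisation. The naturality step that converts $E$--equivalences acting trivially on $A \Dsmash X$ into $E$--locality of $\Map(X,Y)$ should be essentially formal once the adjunction on the homotopy level is in place, and will mirror the proof of the unstable $E$--familiar criterion already established earlier in the paper.
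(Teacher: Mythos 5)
Your proposal is correct, but it runs along a genuinely different track from the paper's argument. You work entirely in homotopy categories: the derived adjunction $[X \Dsmash A, Y]_{\C} \cong [A, R\Map(X,Y)]_{\Sp}$ plus Yoneda converts ``every $E$--equivalence acts by an isomorphism on $\Ho(\C)$'' directly into ``every $R\Map(X,Y)$ is $E$--local'', and the universal property of the localisation $\Ho(\Sp) \to \Ho(L_E\Sp)$ handles the factorisation of the action. The paper instead works at the model category level: it defines ``stably $E$--familiar'' to mean that each Quillen adjunction $(X \sframe -, \Map(X,-))$ descends to a Quillen adjunction with $L_E\Sp$, proves that this holds iff the mapping spectra are $E$--local by verifying that $\Map(X,-)\co \C \to L_E\Sp$ is right Quillen (via Dugger's criterion: it preserves trivial fibrations since the cofibrations agree, it sends fibrations to level fibrations, $\Map(X,Y)$ is an $E$--local $\Omega$--spectrum for fibrant $Y$, and level fibrations between fibrant objects of $L_E\Sp$ are fibrations), and then separately shows in Theorem \ref{thm:EfamiliarEmodule} that this model-level condition is equivalent to the $\Ho(\Sp)$--module structure factoring. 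Your route is shorter and cleaner for the statement as literally phrased, and your treatment of the descent of the coherence isomorphisms (invert the naturality squares) is the same trick the paper uses in Lemma \ref{lem:localmodfunctors}. What the paper's heavier approach buys is the point-set statement that every stable frame is an $E$--local stable frame, which is what the rest of the paper actually uses (for instance to construct $\sframe^L_E$ as a genuine derived functor and to run the arguments of Section \ref{sec:Efamiliarstable}); to recover that from your derived-category argument you would need one extra step, e.g.\ the dual of Dugger's Corollary A.2 to upgrade ``the derived functor inverts $E$--equivalences'' to ``the left adjoint is left Quillen out of $L_E\Sp$''.
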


We call such a model category stably $E$--familiar. 
It is important to note that in general being stable and $E$--familiar
is not sufficient to be stably $E$--familiar.

As an application, we study how these techniques relate to rigidity of stable model categories. A stable model category is called rigid if its homotopical behaviour only depends on the triangulated structure of its homotopy category. The main examples are spectra themselves \cite{Sch07} and $K_{(2)}$--local spectra \cite{Roi07}. We show how the proofs of those results fit into our framework. This will provide a more streamlined formal setting for future rigidity proofs.

We also consider an alternative approach to rigidity, which investigates how much homotopical information is seen by framings. The answer, Theorem \ref{thm:modrigid}, is that in the case of a smashing localisation, the homotopical information of $E$--local spectra is entirely encoded in the $\Ho(\Sp)$--module structure of the $E$--local stable homotopy category. 

\begin{thm}
Let $L_E$ be a smashing localisation and let
\[
\Phi: \Ho(L_E \Sp) \longrightarrow \Ho(\C)
\]
be an equivalence of triangulated categories. Then the following are equivalent.
\begin{itemize}
\item $\Phi$ is the derived functor of a Quillen equivalence.
\item $\Phi$ is a $\Ho(\Sp)$--module functor.
\end{itemize}
\end{thm}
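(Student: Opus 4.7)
The forward implication is immediate: any Quillen equivalence between stable model categories induces a triangulated equivalence that preserves stable frames, and when $L_E$ is smashing the $\Ho(\Sp)$--action on $\Ho(L_E \Sp)$ is the standard action restricted along $L_E$. I focus on the substantive direction.

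The plan is to use stable frames to promote $\Phi$ to an actual Quillen adjunction and then to argue that this adjunction is necessarily a Quillen equivalence. Pick a cofibrant--fibrant object $X \in \C$ representing $\Phi(L_E S^0)$. By \cite{Len11}, $X$ determines a Quillen adjunction
\[
X \sframe - \co \Sp \lradjunction \C \co \Map(X,-)
\]
whose derived functor is a $\Ho(\Sp)$--module functor sending $S^0$ to $X$. The next goal is to show that $\C$ is stably $E$--familiar, so that by Theorem \ref{localmappingspectrum} this adjunction factors through a Quillen adjunction $L_E \Sp \lradjunction \C$.

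For any $Y_1, Y_2 \in \C$ and any $E$--acyclic spectrum $A$, the framing adjunction for $\C$ gives an isomorphism $[A, \Map(Y_1,Y_2)]_{\Ho(\Sp)} \cong [A \sframe Y_1, Y_2]_{\Ho(\C)}$, where the smash denotes the $\Ho(\Sp)$--action on $\Ho(\C)$ coming from the stable frames. Since $\Phi$ is a $\Ho(\Sp)$--module equivalence, this group is in bijection with $[A \sframe \Phi^{-1}(Y_1), \Phi^{-1}(Y_2)]_{\Ho(L_E \Sp)}$. Because $\Phi^{-1}(Y_1)$ is $E$--local and $A$ is $E$--acyclic, $A \sframe \Phi^{-1}(Y_1)$ is $E$--acyclic and this group vanishes. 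Hence every mapping spectrum in $\C$ is $E$--local, $\C$ is stably $E$--familiar, and the descended Quillen adjunction $L_E \Sp \lradjunction \C$ exists.

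It remains to identify the derived functor $\Psi$ of this descended adjunction with $\Phi$. Both are triangulated $\Ho(\Sp)$--module functors out of $\Ho(L_E \Sp)$ sending $L_E S^0$ to $X$. Because $L_E$ is smashing, every $Z \in \Ho(L_E \Sp)$ is canonically isomorphic to $Z \sframe L_E S^0$ under the $\Ho(\Sp)$--action, so both functors are determined up to natural isomorphism by their common value at $L_E S^0$. Hence $\Psi \simeq \Phi$ as triangulated functors, and since $\Phi$ was already assumed to be an equivalence, the adjunction is a Quillen equivalence. The main obstacle, as I see it, is the stably $E$--familiar step: one must be precise about which $\Ho(\Sp)$--module structure on $\Ho(\C)$ is in play and why it matches the structure transported from $\Ho(L_E \Sp)$ along $\Phi$. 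This matching is exactly what the module--functor hypothesis provides, but carrying it through with the naturality needed for the vanishing argument requires careful bookkeeping, and it is here that the smashing assumption on $L_E$ is used most sharply.
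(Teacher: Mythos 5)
Your proof is correct, and while its overall skeleton matches the paper's (realise $\Phi$ via a stable frame on $X=\Phi(L_E\mathbb{S})$, show $\C$ is stably $E$--familiar, then use the module structure and the smashing hypothesis to identify the derived functor $X\sframe^L_E-$ with $\Phi$), the crucial middle step is argued by a genuinely different route. The paper establishes stable $E$--familiarity by invoking Schwede--Shipley's classification of compactly generated stable model categories as module categories over a ring spectrum $R$, together with the EKMM identification $\Ho(L_E R\text{--mod})\cong\Ho(R\text{--mod})[E^{-1}]$; it then observes that $\Phi^{-1}$ factors through $\Ho(L_ER\text{--mod})$, which is an $L_E\Sp^\Sigma$--model category, so $\C$ is stably $E$--familiar by Quillen invariance. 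You instead prove the mapping-spectrum criterion of Theorem \ref{localmappingspectrum} directly: for $E$--acyclic $A$ the derived frame adjunction gives $[A,R\Map(Y_1,Y_2)]\cong[Y_1\sframe^L A,Y_2]_{\Ho(\C)}$, the module-functor hypothesis transports this to $[\Phi^{-1}(Y_1)\spectrasmash^L A,\Phi^{-1}(Y_2)]_{\Ho(L_E\Sp)}$, and this vanishes because an acyclic smashed with anything is acyclic. Your argument is more self-contained and elementary: it avoids the technical hypotheses implicit in the Schwede--Shipley theorem and the detour through $R$-- and $L_ER$--modules, and it makes transparent exactly where the module-functor hypothesis enters. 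What the paper's route buys in exchange is an explicit zig-zag of Quillen equivalences $\C\simeq R\text{--mod}\simeq L_ER\text{--mod}$, which exhibits $\C$ as an honest $L_E\Sp^\Sigma$--model category rather than merely verifying the locality of mapping spectra. Your final identification step (a $\Ho(\Sp)$--module functor out of $\Ho(L_E\Sp)$ is determined by its value on $L_E\mathbb{S}$ since $Z\cong L_E\mathbb{S}\spectrasmash^L Z$ when $L_E$ is smashing) is exactly the paper's closing computation.
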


In \cite{SchShi02}, Schwede and Shipley show that a stable model category is entirely determined by the triangulated structure of its homotopy category together with a $\pi_*(\mathbb{S})$--action. For a stably $E$--familiar model category, we prove $E$--local analogues. This offers a technical advantage as the homotopy groups of the $E$--local spheres tend to be more highly structured and better understood than $\pi_*(\mathbb{S})$. 

A final application is examining algebraic model categories ($\ch(\mathbb{Z})$--model categories) that are also stably $E$--familiar. Our conclusion is Theorem \ref{thm:algrational}:

\begin{thm}
The model category of $E$--local spectra,
$L_E \Sp$, is an algebraic model category 
if and only if $E=\h\mathbb{Q}$.
\end{thm}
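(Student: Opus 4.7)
The plan is to prove the forward implication from known results on rational spectra, and to derive the reverse implication from the interplay between the $\ch(\mathbb{Z})$-enrichment and the stable frame machinery of the paper. For $E = \h\mathbb{Q}$, the classical Quillen equivalence between $L_{\h\mathbb{Q}} \Sp$ and $\ch(\mathbb{Q})$, a rationalisation of Shipley's zigzag between $\h\mathbb{Z}$-module spectra and $\ch(\mathbb{Z})$, combined with the canonical $\ch(\mathbb{Z})$-action on $\ch(\mathbb{Q})$ via restriction of scalars along $\mathbb{Z} \to \mathbb{Q}$, transfers a $\ch(\mathbb{Z})$-model structure to $L_{\h\mathbb{Q}} \Sp$, so this direction is immediate.

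For the converse, assume $L_E \Sp$ is a $\ch(\mathbb{Z})$-model category. I would combine the stable $E$-familiarity of $L_E \Sp$ (so that the stable-frame mapping spectra are $E$-local) with Shipley's Quillen equivalence between $\ch(\mathbb{Z})$ and $\h\mathbb{Z}$-modules. The $\ch(\mathbb{Z})$-enrichment promotes the stable frame enrichment to an $\h\mathbb{Z}$-module enrichment, so every mapping spectrum $\Map(X,Y)$ is an $\h\mathbb{Z}$-module spectrum, hence weakly equivalent to a generalised Eilenberg--MacLane spectrum (GEM). Taking $X = Y = L_E \mathbb{S}$ and using the unit property of $L_E \mathbb{S}$ shows that $L_E \mathbb{S}$ itself is a GEM, so $L_E \mathbb{S} \simeq \h R_*$ with $R_* = \pi_*(L_E \mathbb{S})$ a graded commutative ring.

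To finish, I would pin down $R_*$. The $\h\mathbb{Z}$-module structure forces the unit map $\mathbb{S} \to L_E \mathbb{S}$ to vanish on every positive-degree stable stem, leaving only a ring map $\mathbb{Z} \to R_0$ in degree zero. Each $R_n$ is an $R_0$-module, so $\langle L_E \mathbb{S} \rangle = \langle \h R_0 \rangle$, and in fact the composite $\Sp \to \h\mathbb{Z}\text{-mod} \to L_E\Sp$ identifies $L_E\mathbb{S}$ with the image of $\h\mathbb{Z}$ under a Bousfield localisation internal to $\h\mathbb{Z}\text{-mod} = D(\mathbb{Z})$. Bousfield localisations in $D(\mathbb{Z})$ are very restricted: they correspond to localisations or completions of $\mathbb{Z}$. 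Case analysis rules out every candidate except $R_0 = \mathbb{Q}$: for $R_0 \subsetneq \mathbb{Q}$, a prime $p$ is not inverted and the class $\alpha_1 \in \pi_{2p-3}(\mathbb{S})$ persists in $\pi_*(L_{\h R_0}\mathbb{S})$, contradicting GEM-ness; for torsion or mixed rings the sphere localises to a completion in which $\eta$ or higher Hopf elements survive, again preventing GEM-ness. Hence $L_E\mathbb{S} \simeq \h\mathbb{Q}$, so the unit of $L_E\Sp$ is $\h\mathbb{Q}$ and $L_E\Sp = L_{\h\mathbb{Q}}\Sp$; that is, $E = \h\mathbb{Q}$ up to Bousfield class.

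The main obstacle is the compatibility step in the second paragraph: showing that the abstract $\ch(\mathbb{Z})$-enrichment promotes to a concrete $\h\mathbb{Z}$-module structure on the mapping spectra produced by stable frames, so that GEM-ness of the local sphere can be extracted. This draws on both the paper's framing constructions and Shipley's comparison; once it is in place, the remaining Bousfield-class analysis is essentially classical, boiling down to the well-known fact that rationalisation is the only Bousfield localisation of $\mathbb{S}$ which kills every positive stable stem.
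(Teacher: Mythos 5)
Your overall strategy matches the paper's in outline: use the $\ch(\mathbb{Z})$--enrichment to see that mapping spectra are generalised Eilenberg--MacLane spectra, specialise to the endomorphisms of the local sphere, and then classify the possibilities. But two steps in your converse direction have genuine gaps. First, your mechanism for pinning down $\pi_*(L_E \mathbb{S})$ is not justified: the claim that the composite $\Sp \to \h\mathbb{Z}\mbox{--mod} \to L_E \Sp$ ``identifies $L_E\mathbb{S}$ with the image of $\h\mathbb{Z}$ under a Bousfield localisation internal to $D(\mathbb{Z})$'' presupposes exactly the control over $L_E$ on Eilenberg--MacLane spectra that has to be imported from outside, and your assertion that such localisations correspond to localisations or completions of $\mathbb{Z}$ is stated without reference or proof. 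The paper's route is different and sharper: it uses the unital $\pi_*(L_E\mathbb{S})$--action on $[X,Y]_*^{L_E\Sp}$ coming from the stable framing machinery, together with Guti\'errez's computation \cite[Corollary 4.2]{Gut10} that $E$--localisations of Eilenberg--MacLane spectra are again products of Eilenberg--MacLane spectra, to conclude by degree reasons and unitality ($\alpha = \alpha \cdot \id$) that $\pi_*(L_E\mathbb{S})$ is concentrated in degree zero --- so $L_E\mathbb{S} = \h R$ on the nose, not merely a GEM. It then invokes \cite[Theorem 3.5]{Gut10}, giving $L_E \h R = L_{\h G}\h R$ with $G = \mathbb{Z}/P$ or $\mathbb{Z}_{(P)}$, plus idempotence of localisation, to reduce to $P$--localisation or $P$--completion and hence to $R = \mathbb{Q}$. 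Your weaker conclusion ($L_E\mathbb{S}$ is a GEM, unit map kills positive stems) leaves you needing these classification results anyway, and your case analysis quietly assumes $L_E$ agrees with $L_{\h R_0}$ on the sphere, which has not been established.

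Second, your final sentence ``$L_E\mathbb{S}\simeq \h\mathbb{Q}$, so \dots $L_E\Sp = L_{\h\mathbb{Q}}\Sp$'' is a non sequitur: the localisation of the sphere does not determine the localisation functor (for instance $L_{\h\mathbb{Z}}\mathbb{S}\simeq\mathbb{S}$ although $L_{\h\mathbb{Z}}$ is not the identity). The paper closes this gap explicitly: the cofibre of $\mathbb{S}\to\h\mathbb{Q}$ is $E$--acyclic, hence every Moore spectrum $M(n)$ is $E$--acyclic, hence multiplication by $n$ is invertible on $E$, so $E$ is rational, and a rational spectrum is a module over $\h\mathbb{Q}$ and therefore a wedge of shifted copies of $\h\mathbb{Q}$, giving $L_E = L_{\h\mathbb{Q}}$. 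You need an argument of this kind to finish. (Your forward direction via Shipley's equivalence and restriction of scalars along $\mathbb{Z}\to\mathbb{Q}$ is fine and is essentially what the paper points to.)
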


\subsection*{Organisation}
Section \ref{sec:Eloc} is a reminder of the notion
of Bousfield localisations of simplicial sets of spectra. Section \ref{sec:modsimp} recalls the notions of $\C$--module categories and $\C$--model categories. 
Section \ref{sec:framings} summarises Hovey's work on framings which proves that 
the homotopy category of any pointed model category is a $\Ho(\sset)$--module.

Section \ref{sec:Elocalframings} marks the start of the new work. We study 
when a framing on a model category is compatible with the $E$--local model structure on simplicial sets and define the notion of $E$--familiar model categories. In Section \ref{sec:Efamiliar} we study the properties of these $E$--familiar model categories. Furthermore, we show how our set-up generalises the notion of an $L_E \sset$--model category.

We move to a stable setting and use Lenhardt's notion of stable frames to replace simplicial sets with spectra in Section \ref{sec:Stableframes}. Following a similar pattern to the non-stable case,
in Section \ref{sec:Efamiliarstable},  we 
ask when are these stable frames compatible with the $E$--local model structure 
on spectra. 
The fact that a stable $E$--familiar model category is not, in general, a stably $E$--familiar model category is examined in  Section \ref{sec:versus}.
We finish the paper with examples and applications in Section \ref{sec:applications}. We start with some immediate consequences from the previous sections regarding chromatic localisations. The next part is dedicated to rigidity questions, followed by a study of $\pi_*(L_E \mathbb{S})$--actions. Finally, we can classify how 
$\Ho(L_E \Sp)$ acts on the homotopy category of a big class of algebraic stably $E$--familiar model categories.

\section{\texorpdfstring{$E$}{E}--localisations}\label{sec:Eloc}

Let $E$ be a spectrum, then $E$ corepresents a homology functor $E_*$ on the category of simplicial sets via $E_*(X) = \pi_*(E \spectrasmash X)$. Bousfield used this to construct a homotopy category of spaces where maps which induce isomorphisms on $E_*$--homology are isomorphisms \cite{Bou75}. Later, this was extended to a similar construction for spectra in \cite{bou79}. We recap some of the definitions from this work. We give them for simplicial sets, but there are obvious analogues for spectra. We denote homotopy classes of maps of simplicial sets by $[-,-]$ and we denote the product in $\sset$ by $\ssetsmash$.

\begin{definition}
A map $f \co X \to Y$ of simplicial sets is an \textbf{$E$--equivalence} if $E_*(f)$ is an isomorphism.
A simplicial set $Z$ is \textbf{$E$--local} if $f^* \co [Y,Z] \to [X,Z]$ is an isomorphism for
all $E$--equivalences $f \co X \to Y$. A simplicial set $A$ is \textbf{$E$--acyclic} if
$[A,Z]$ consists of only the trivial map, for all $E$--acyclic $Z$.
An $E$--equivalence from $X$ to an $E$--local object $Z$ is called an \textbf{$E$--localisation}.
\end{definition}

Bousfield localisation of simplicial sets gives rise to a homotopy theory that is particularly sensitive towards $E_*$ and $E$--local phenomena. The $E$--local homotopy theory is obtained from the category of simplicial sets by formally inverting the $E$--equivalences. In terms of model structures we have the theorem below which summarises \cite[Section 10]{Bou75}. Note that any weak homotopy equivalence of simplicial sets
is an $E$--equivalence. 

\begin{theorem}
Let $E$ be a homology theory. Then there is a model structure, $L_E\sset$, on the category of simplicial sets such that
\begin{itemize}
\item the weak equivalences are the $E_*$--isomorphisms
\item the cofibrations are cofibrations of simplicial sets (i.e. inclusions)
\item the fibrations are those maps with the right lifting property towards trivial cofibrations.
\end{itemize}
\end{theorem}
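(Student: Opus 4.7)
The plan is to realise $L_E\sset$ as the left Bousfield localisation of the standard Kan--Quillen model structure on $\sset$ at the $E$-equivalences. Since the cofibrations are to remain the monomorphisms and the weak equivalences are prescribed, the fibrations are forced by the right lifting property; the real content of the theorem is the existence of functorial factorisations, which reduces to producing a \emph{set} of generating trivial cofibrations to which the small object argument can be applied.

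I would begin by recording the closure properties of the class $\mathcal{W}_E$ of $E$-equivalences: it contains all ordinary weak equivalences, satisfies two-out-of-three, and is closed under retracts, pushouts along monomorphisms, and filtered colimits, since $E_*$ commutes with filtered colimits and turns cofibration pushouts into Mayer--Vietoris sequences. Next, I characterise the $E$-local fibrant objects $Z$ as those Kan complexes with the right lifting property against every $E$-trivial monomorphism; this in turn pins down the candidate fibrations between fibrant objects.

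The main obstacle, and the heart of Bousfield's original argument, is the cardinality bound: the class of $E$-trivial monomorphisms must be generated by a \emph{set} under transfinite composition and pushout. Choosing an infinite cardinal $\kappa$ that bounds the cardinality of a simplicial model for $E$ together with the size of a functorial fibrant replacement on $\kappa$-bounded inputs, I would prove that any $E$-trivial monomorphism $A \hookrightarrow B$ is a filtered colimit of $E$-trivial sub-monomorphisms $A \hookrightarrow B'$ with $|B'| \leq \kappa$. The delicate point is that an arbitrary bounded sub-object $B' \subseteq B$ need not itself give an $E$-equivalence; one inductively enlarges $B'$ inside $B$ by adjoining, for each relative $E$-homology class of $(B',A)$ that vanishes in $(B,A)$, a bounded witness of that vanishing, and takes a fixed point of this enlargement process below $\kappa$.

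Once this cardinality estimate is available, the set $J_E$ of $E$-trivial monomorphisms between simplicial sets of cardinality at most $\kappa$, together with the standard set $J$ of horn inclusions, generates the $E$-trivial cofibrations. The small object argument then supplies the two required factorisations, and one identifies the fibrations with the $J_E$-injectives as desired. Retract closure and two-out-of-three for $\mathcal{W}_E$ complete the verification, with the full details of the cardinality computation being those of \cite[Section 10]{Bou75}, which I expect to be by far the most technical step.
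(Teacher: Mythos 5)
Your outline is correct and follows exactly the route the paper relies on: the theorem is stated there as a summary of \cite[Section 10]{Bou75} with no independent proof, and your sketch --- cofibrations and weak equivalences prescribed, fibrations forced by the lifting property, and the Bousfield--Smith cardinality argument (bounding $E$ by a cardinal $\kappa$ and closing up bounded subobjects to $E$-trivial ones) supplying a generating set of $E$-trivial cofibrations for the small object argument --- is precisely Bousfield's proof. There is nothing in the paper to compare it against beyond that citation, and your identification of the cardinality estimate as the technical heart of the matter is accurate.
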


The fibrant replacement functor of this model structure is an $E$--localisation functor. In the $E$--local homotopy category of simplicial sets, $\Ho(L_E\sset)$, every object is isomorphic to a local one. Finally, we 
can identify the fibrant objects of this model structure. Since we will need to refer to this later, 
we give it as a corollary.
\begin{corollary}\label{cor:localmappingspace}
A simplicial set $K$ is $E$--fibrant if and only if it is fibrant in $\sset$ and $E$--local.
\end{corollary}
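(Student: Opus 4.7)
The plan is to prove the two implications separately, using the definitions together with the simplicial enrichment of $L_E \sset$.

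For the forward direction, suppose $K$ is $E$--fibrant. The first observation is that every weak equivalence of $\sset$ is an $E$--equivalence, so every trivial cofibration of $\sset$ remains a trivial cofibration in $L_E \sset$. Hence $K \to \ast$ lifts against all trivial cofibrations of $\sset$, so $K$ is fibrant in $\sset$. To establish $E$--locality, I would exploit that $L_E \sset$ is a simplicial model category, so for any trivial cofibration $i \co X \to Z$ of $L_E \sset$ the map $i^* \co \map(Z, K) \to \map(X, K)$ is a trivial fibration of simplicial sets. Given an arbitrary $E$--equivalence $f \co X \to Y$, factor it in $\sset$ as $X \xrightarrow{i} Z \xrightarrow{p} Y$ with $i$ a cofibration and $p$ a trivial fibration; since $p$ is a weak equivalence of $\sset$ it is an $E$--equivalence, so the two-out-of-three property forces $i$ to be an $E$--equivalence and hence a trivial cofibration in $L_E \sset$. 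Both $i^*$ and $p^*$ are then weak equivalences on mapping spaces, so applying $\pi_0$ yields the bijection $[Y, K] \cong [X, K]$ required for $E$--locality.

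For the converse, suppose $K$ is fibrant in $\sset$ and $E$--local. Given a trivial cofibration $i \co A \to B$ in $L_E \sset$ and a map $\phi \co A \to K$, I need to produce an extension $\psi' \co B \to K$ with $\psi' i = \phi$. Because $i$ is an $E$--equivalence and $K$ is $E$--local, the map $i^* \co [B, K] \to [A, K]$ is a bijection, so there exists $\psi \co B \to K$ with $\psi i \simeq \phi$. The final step applies the homotopy extension property of the cofibration $i$ against the fibrant target $K$: the homotopy from $\psi i$ to $\phi$ is lifted to a homotopy from $\psi$ to some $\psi'$ satisfying $\psi' i = \phi$, giving the strict extension.

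The step I expect to be the main obstacle is precisely this last one, where a homotopy-level extension must be promoted to a strict equality. All other steps are essentially formal from the definitions and the simplicial structure, but passing from ``$\psi i \simeq \phi$'' to ``$\psi' i = \phi$'' genuinely requires the cofibrancy of $i$ together with the fibrancy of $K$ in $\sset$, which is why one must first secure ordinary fibrancy before attempting to verify the lifting property in $L_E \sset$.
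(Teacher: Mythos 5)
Your argument is correct. Note, however, that the paper does not actually prove this corollary: it is stated with an immediate \qed as a summary of Bousfield's results in \cite[Section 10]{Bou75}, so you have supplied a complete argument where the paper defers to the literature. Both halves of your proof are the standard ones. In the forward direction you could avoid invoking the simplicial enrichment of $L_E\sset$ for the map $p$ by observing that the two model structures have the same cofibrations and hence the same trivial fibrations, but using SM7 for $L_E\sset$ (which is indeed part of Bousfield's theorem, so there is no circularity) works equally well; the chain $f^* = i^* \circ p^*$ of weak equivalences on mapping spaces, followed by $\pi_0$, does give the required bijection $[Y,K] \to [X,K]$ since all simplicial sets are cofibrant and $K$ has already been shown to be fibrant in $\sset$. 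In the converse, your identification of the key step is right: upgrading $\psi i \simeq \phi$ to a strict extension is exactly the homotopy extension property, i.e.\ lifting against the trivial cofibration from $(B \ssetsmash \{0\}) \cup_{A \ssetsmash \{0\}} (A \ssetsmash \Delta[1])$ into $B \ssetsmash \Delta[1]$, which is where the ordinary fibrancy of $K$ (established first) and the cofibrancy of $i$ both enter. The only detail worth flagging is that $\sset$ here denotes \emph{pointed} simplicial sets, so the cylinders and homotopies should be taken in the pointed sense (using $\Delta[1]_+$); this changes nothing of substance.
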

\qed

\begin{ex}
In \cite{Bou75}, Bousfield gives some examples of $E$--local simplicial sets. For this, one has to consider ``nilpotent spaces'', i.e. simplicial sets on whose homotopy groups the fundamental group acts in a certain way \cite[4.2]{Bou75}. For example, simply connected simplicial sets are nilpotent. Now let $P$ be a set of primes. For $$R=\bigoplus\limits_{p \in P}\mathbb{Z}/p \,\,\,\mbox{or}\,\,\, R=\mathbb{Z}_{(P)},$$
$\h R$--local simplicial sets can be characterised by their homotopy groups together with the action of $\pi_1$ on them \cite[Theorem 5.5]{Bou75}. In the case of $R=\mathbb{Z}_{(P)}$ this implies that 
\[
\pi_*(L_{\h R} K) \cong \pi_*(K) \otimes \mathbb{Z}_{(P)}.
\]
In the case of $R=\bigoplus\limits_{p \in P}\mathbb{Z}/p$, a simplicial set is $\h R$--local if and only if it is $P$--complete. 
\end{ex}

\bigskip
In the later sections of this paper we will deal with spectra instead of simplicial sets. Two categories of spectra will occur, most prominently the category of \textbf{sequential spectra} (or \textbf{Bousfield--Friedlander spectra}) which we will denote by $\mathcal{S}$. For some results we will need a monoidal model category of spectra. For this we choose \textbf{symmetric spectra $\mathcal{S}^\Sigma$} in the sense of \cite{HSS}. Again, there are $E$--local versions of both model categories where the weak equivalences are $E_*$--isomorphisms, cofibrations are the same as before and fibrations are defined via their lifting property. As for references, the introduction of \cite{bou79} as well as \cite[Remark 3.12]{GJ98} cover the case of $L_E \mathcal{S}$. The existence of $L_E \mathcal{S}^\Sigma$ is well-known but has not yet been fully published. The most complete reference known to the authors is the Diplom thesis of Jan M\"ollers under the supervision of Stefan Schwede.

\begin{ex}\label{ex:plocalspectra}
A spectrum $X \in \Sp$ is fibrant in the $\h\mathbb{Z}_{(P)}$--local model structure if and only if it is an $\Omega$--spectrum and its homotopy groups are $\mathbb{Z}_{(P)}$--local. In particular, this implies that $X$ is $\h\mathbb{Z}_{(P)}$--local if its level spaces are local, see 
Lemma \ref{lem:HRfamiliarity}. Unfortunately, this does not hold for $\h R$--localisation with $R=\bigoplus\limits_{p \in P}\mathbb{Z}/p$.
\end{ex}

\section{Some model category techniques and simplicial methods}\label{sec:modsimp}

In this section, we are briefly going to recall some of the definitions we work with. For more detail, we refer to \cite[Chapter 4]{Hov99} and  \cite[Appendix A]{Dug06}. 

\begin{definition}
Let $\mathcal{C}$, $\mathcal{D}$ and $\mathcal{E}$ be categories. An \textbf{adjunction of two variables} consists of functors

\[
\begin{array}{r@{\quad : \quad}l}
-\tensor - & \mathcal{C} \times \mathcal{D} \longrightarrow \mathcal{E} \\
(-)^{(-)} & \mathcal{D}^{op} \times \mathcal{E} \longrightarrow \C \\
\map(-,-) & \mathcal{C}^{op} \times \mathcal{E} \longrightarrow \mathcal{D} 
\end{array}
\]

satisfying the usual adjointness conditions, see \cite[Definition 4.1.12]{Hov99}.
\end{definition}

If the categories in above definition are model categories, then it makes sense to ask for an adjunction of two variables to be compatible with the respective model structures. 

\begin{definition}
Now let $\C$, $\mathcal{D}$ and $\mathcal{E}$ be model categories. A \textbf{Quillen adjunction of two variables} is an adjunction of two variables such that:

If $f: U \longrightarrow V$ is a cofibration in $\C$ and $g: W \longrightarrow X$ is a cofibration in $\mathcal{D}$, then the induced pushout-product map
\[
f \Box g:  (U \tensor X) \coprod_{U \tensor W} (V \tensor W) \longrightarrow V \tensor X.
\]
is a cofibration in $\mathcal{E}$. Furthermore, the map $f \Box g$ must be a trivial cofibration
if either of $f$ or $g$ is. 

The left adjoint $-\tensor -$ is sometimes called a \textbf{left Quillen bifunctor}.
\end{definition}

\begin{definition}\label{Dmodule}
Let $\mathcal{D}$ be a closed symmetric monoidal category with product $\times$ and unit $S$. A category $\mathcal{M}$ is a \textbf{closed $\mathcal{D}$--module category} if it has an adjunction of two variables
\[
(-\tensor-, (-)^{(-)}, \map(-,-)): \mathcal{M} \times \mathcal{D} \longrightarrow \mathcal{M}
\]
together with natural associativity isomorphisms
\[
(X \tensor D) \tensor E \longrightarrow X \tensor (D \times E)
\]
and natural unit isomorphisms
\[
X \tensor S \longrightarrow X.
\]
These isomorphisms have to make satisfy some standard coherence conditions. That is, the pentagonal diagram describing fourfold associativity must commute,
as must the triangle relating the 
two ways to obtain $X \tensor D$ from $X \tensor (S \times D)$.
\end{definition}

If $\mathcal{D}$ is a symmetric monoidal \emph{model} category, then one can ask for the $\mathcal{D}$--module structure on a model category $\mathcal{M}$ to be compatible with the model structures.

\newpage

\begin{definition}
Let $\mathcal{D}$ be a closed symmetric monoidal model category. A model category $\mathcal{M}$ is a \textbf{$\mathcal{D}$--model category} if it is a $\mathcal{D}$--module category in the sense of Definition \ref{Dmodule} satisfying the following. 
\begin{itemize}
\item $-\tensor -$ is a Quillen bifunctor.
\item Let $QS \longrightarrow S$ be the cofibrant replacement of the unit in $\mathcal{D}$ and let $X \in \mathcal{M}$ be cofibrant. Then 
\[
X \tensor QS \longrightarrow X \tensor S
\]
is a weak equivalence in $\mathcal{M}$
\end{itemize}
\end{definition}

We are interested in the case where $\mathcal{D}$ is the model category of pointed simplicial sets or symmetric spectra. 
\begin{definition}
A \textbf{simplicial model category} is an $\sset$--model category. A \textbf{spectral model category} is an $\Sp^\Sigma$--model category.
\end{definition}

\section{Framings}\label{sec:framings}

In this section we are going to recall some basic properties of cosimplicial and simplicial frames. Suppose one is studying a model category $\C$ that is not necessarily simplicial, one would still like to have a reasonable substitute for tensoring with simplicial sets or for mapping spaces. Framings provide such a generalisation. The idea is to take an object $A \in \C$, view it as a constant cosimplicial (or simplicial object) in $\C$ and then apply a particular cofibrant (respectively fibrant) replacement. The resulting cosimplicial or simplicial objects can then be used to define the desired tensor, cotensor and enrichment structures over $\sset$. Since various choices are involved in the process, this will not make $\C$ a simplicial model category. But it can at least ensure that the homotopy category $\Ho(\C)$ is a closed $\Ho(\sset)$--module. For more details on framings see, for example, \cite[Chapter 5]{Hov99} or \cite[Chapter 16]{Hir03}.

We note that for the statements in this section the simplicial case for $\C$ is dual to the cosimplicial case of $\C^{op}$, but we prefer to spell out the simplicial case anyway.

\bigskip
We begin with the cosimplicial case.
Let $\mathcal{C}$ be a category. By $\C^\Delta$ we denote the category of cosimplicial objects in $\C$. The standard model structure for this category is the Reedy model structure, which is described in \cite[Section 5.1]{Hov99}.
It is well--known that $\C^\Delta$ is equivalent to the category of adjunctions
\[
\sset \,\,\raisebox{-0.1\height}{$\overrightarrow{\longleftarrow}$}\,\, \C
\]
see, for example,  \cite[Proposition 3.1.5]{Hov99} or \cite[Theorem 16.4.2]{Hir03}. We denote the image of $A^\bullet \in \C^\Delta$ under this equivalence by $$(A^\bullet \tensor -, \C(A^\bullet,-)).$$ Note that
\begin{itemize}
\item $A^\bullet \tensor \Delta[n] = A^\bullet[n]$
\item $A^\bullet \tensor \partial \Delta[n] \longrightarrow A^\bullet\tensor\Delta[n]$ is the $n^{th}$ latching map of $A^\bullet$ \cite[Proposition 16.3.8]{Hir03}
\item $A^\bullet \tensor -$ preserves colimits.
\end{itemize}

Dually, the category $\C^{\Delta^{op}}$ of simplicial objects in $\C$ is equivalent to the category of adjunctions
\[
\sset^{op} \,\,\raisebox{-0.1\height}{$\overleftarrow{\longrightarrow}$}\,\, \C.
\]
We denote the image of an object $A_\bullet \in \C^{\Delta^{op}}$ by $(A_\bullet^{(-)}, \C(-,A_\bullet)).$
Note carefully that an adjunction
\[
\sset^{op} \,\,\raisebox{-0.1\height}{$\overleftarrow{\longrightarrow}$}\,\, \C
\]
is the same as an adjunction
\[
\sset \,\,\raisebox{-0.1\height}{$\overrightarrow{\longleftarrow}$}\,\, \C^{op}
\]
with the left and right adjoints interchanged. In the first convention the functor $A_\bullet^{(-)}$ is the right adjoint of $\C(-,A_\bullet)$. Again we have the following properties.
\begin{itemize}
\item $A_\bullet^{\Delta[n]} = A_\bullet[n]$
\item $A_\bullet^{\Delta[n]} \longrightarrow A_\bullet^{\partial\Delta[n]}$ is the $n^{th}$ matching map of $A_\bullet$ \cite[Proposition 16.3.8]{Hir03}
\item $A_\bullet^{(-)}$ takes limits of $\sset$ to colimits of $\C$. 
\end{itemize}
One must take care with the last property. For example, note that
a limit of $\sset$ is a colimit of $\sset^{op}$. 

\begin{definition}
If $\C$ is a model category, 
we say that an object $A^\bullet \in \C^\Delta$ is a \textbf{cosimplicial frame} if
\[
A^\bullet \tensor - : \sset \,\,\raisebox{-0.1\height}{$\overrightarrow{\longleftarrow}$}\,\, \C :\C(A^\bullet,-)
\]
is a Quillen adjunction.

An object $A_\bullet \in \C^{\Delta^{op}}$ is a \textbf{simplicial frame} if
\[
A_\bullet^{(-)}: \sset^{op} \,\,\raisebox{-0.1\height}{$\overleftarrow{\longrightarrow}$}\,\, \C : \C(-,A_\bullet)
\]
is a Quillen adjunction.
\end{definition}

Note that a Quillen adjunction $\C \,\,\raisebox{-0.1\height}{$\overrightarrow{\longleftarrow}$}\,\, \mathcal{D}$ is the same as a Quillen adjunction
$\C^{op} \,\,\raisebox{-0.1\height}{$\overleftarrow{\longrightarrow}$}\,\, \mathcal{D}^{op}$, under this identification a left Quillen functor $F: \C \longrightarrow \mathcal{D}$ becomes a right Quillen functor $F: \C^{op} \longrightarrow \mathcal{D}^{op}$ with respect to the opposite model structure \cite[Remark 1.1.7]{Hov99}.

\medskip
Simplicial and cosimplicial frames can be characterised as follows.

\begin{proposition}\label{framingcharacterisation}
 A cosimplicial object $A^\bullet \in \C^\Delta$ is a cosimplicial frame if and only if $A^\bullet$ is cofibrant and the structure maps $A^\bullet[n] \longrightarrow A^\bullet[0]$ are weak equivalences for $n\ge 0$.

 A simplicial object $A_\bullet \in \C$ is a simplicial frame if and only if $A_\bullet$ is cofibrant and the structure maps $A_\bullet[0] \longrightarrow A_\bullet[n]$ are weak equivalences for all $n \ge 0$.
\end{proposition}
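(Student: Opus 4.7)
The plan is to work out the cosimplicial case in detail; the simplicial version then follows formally by passing to the opposite model category, using the identification of Quillen adjunctions $\C \lradjunction \mathcal{D}$ with Quillen adjunctions $\C^{op} \rladjunction \mathcal{D}^{op}$ recalled just above the statement.

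For the necessity direction, suppose $A^\bullet \tensor -$ is a left Quillen functor. Applying it to the generating cofibration $\partial \Delta[n] \hookrightarrow \Delta[n]$ produces the $n$-th latching map of $A^\bullet$, which must therefore be a cofibration in $\C$; this is exactly Reedy cofibrancy of $A^\bullet$. Moreover, every simplicial set is cofibrant, so $\Delta[n] \to \Delta[0]$ is a weak equivalence between cofibrant objects, and Ken Brown's lemma applied to the left Quillen functor $A^\bullet \tensor -$ forces $A^\bullet[n] \to A^\bullet[0]$ to be a weak equivalence in $\C$.

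For the sufficiency direction, assume $A^\bullet$ is Reedy cofibrant and each codegeneracy-induced map $A^\bullet[n] \to A^\bullet[0]$ is a weak equivalence. Preservation of cofibrations is immediate from cofibrant generation: every cofibration in $\sset$ is a retract of a transfinite composition of pushouts of the generators $\partial \Delta[n] \hookrightarrow \Delta[n]$, and the left adjoint $A^\bullet \tensor -$ commutes with these colimit operations while sending the generators to latching maps, which are cofibrations by hypothesis.

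The substantive step is to show that $A^\bullet \tensor -$ sends trivial cofibrations to trivial cofibrations. Since every simplicial set is cofibrant in $\sset$, it suffices to prove that $A^\bullet \tensor -$ preserves all weak equivalences. My plan is to induct along the skeletal filtration of a weak equivalence $f \co K \to L$: each $n$-skeleton is built from the previous by a pushout along a coproduct of maps $\partial \Delta[n] \hookrightarrow \Delta[n]$, which $A^\bullet \tensor -$ converts into a pushout along a coproduct of latching maps. Reedy cofibrancy makes each such pushout a homotopy pushout in $\C$, and the hypothesis $A^\bullet[n] \simeq A^\bullet[0]$ allows one to compare the attachments across $f$ with the trivial ones and to conclude inductively, taking transfinite composites where needed. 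I expect this skeletal induction — bootstrapping from a pointwise weak equivalence statement about the $A^\bullet[n]$'s to a statement about arbitrary weak equivalences of simplicial sets — to be the main technical obstacle, as it demands careful bookkeeping of cofibrant replacements together with the left properness-style fact that weak equivalences in $\C$ are preserved by pushouts along the cofibrations produced by Reedy cofibrancy.
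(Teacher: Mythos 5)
Your necessity direction is correct and standard: the generating cofibrations $\partial\Delta[n] \to \Delta[n]$ go to the latching maps, and Ken Brown's lemma (every simplicial set being cofibrant) handles the structure maps. The cofibration half of sufficiency is also fine. For context, the paper does not actually prove this proposition; it only lists the ingredients in \cite[Proposition 3.6.8, Example 5.2.4, Theorem 5.2.5, Proposition 5.4.1]{Hov99} and \cite[Proposition 16.3.8]{Hir03}, so the burden of your proposal falls entirely on the step you yourself flag as substantive --- and that is where it breaks down.

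A skeletal induction along a weak equivalence $f \co K \to L$ cannot work, because $f$ does not respect skeletal filtrations: the restriction $\mathrm{sk}_n K \to \mathrm{sk}_n L$ of a weak equivalence is in general not a weak equivalence (already the collapse $\Delta[1] \to \Delta[0]$ fails on $0$--skeleta), and the nondegenerate cells of $K$ and $L$ do not correspond under $f$, so there are no ``attachments across $f$'' to compare. The difficulty is not fixed by Reedy cofibrancy or the gluing lemma: each individual cell attachment is a homotopy pushout along a latching map that is a \emph{non-trivial} cofibration (e.g.\ $A^\bullet[0] \sqcup A^\bullet[0] \to A^\bullet[1]$), so it genuinely changes the homotopy type of $A^\bullet \tensor (-)$; even the skeleta of a single horn $\Lambda^r[n]$ are not weakly equivalent to one another. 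The correct reduction is not to ``all weak equivalences'' but to the generating trivial cofibrations: since $A^\bullet \tensor -$ preserves colimits and retracts, and trivial cofibrations in $\C$ are closed under pushout, transfinite composition and retract, it suffices to show that each $A^\bullet \tensor \Lambda^r[n] \to A^\bullet \tensor \Delta[n]$ is a weak equivalence. That is the real content of \cite[Theorem 5.2.5]{Hov99}: one exploits that $\Lambda^r[n] \to \Delta[n]$ is a simplicial strong deformation retract and that $A^\bullet \tensor (K \times \Delta[1])$ serves as a cylinder object on $A^\bullet \tensor K$ (alternatively, one runs a two-out-of-three argument against the vertex inclusions $A^\bullet \tensor \Delta[0] \to A^\bullet \tensor \Delta[n]$, which are trivial cofibrations precisely because the codegeneracies $A^\bullet[n] \to A^\bullet[0]$ are weak equivalences). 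Your proposal never supplies this mechanism, so the hypothesis on the structure maps is in fact never used in a way that could close the argument.
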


The various ingredients to the proof can be found in \cite[Proposition 3.6.8, Example 5.2.4, Theorem 5.2.5, Proposition 5.4.1]{Hov99} and  \cite[Proposition 16.3.8]{Hir03}.

\begin{theorem}[Hovey]\label{existence} There exists a functor $\C \longrightarrow \C^\Delta$ such that the image $A^*$ of any cofibrant $A \in \C$ under this functor is a cosimplicial frame with $A^*[0] \cong A$.

There also exists a functor $\C \longrightarrow \C^{\Delta^{op}}$ such that the image $A_*$ of any fibrant $A \in \C$ under this functor is a simplicial frame with $A_*[0] \cong A$.
\end{theorem}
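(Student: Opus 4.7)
My plan is to construct $A^*$ as a functorial Reedy cofibrant replacement of the constant cosimplicial object $cA$ on $A$ in $\C^\Delta$ (with $cA[n]=A$ and every coface/codegeneracy the identity), and then use Proposition \ref{framingcharacterisation} to recognise it as a cosimplicial frame. The dual construction using Reedy fibrant replacement of the constant simplicial object will handle the simplicial case.

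The indexing category $\Delta$ is a Reedy category with direct part the cofaces and inverse part the codegeneracies, so functorial factorisation in $\C$ lifts to functorial factorisation in the Reedy model structure on $\C^\Delta$. I proceed by induction on $n$. At level $0$, since $A$ is cofibrant, I set $A^*[0] := A$ with the identity map down to $cA[0] = A$; the map $L_0 A^* = \emptyset \to A^*[0]$ is tautologically a cofibration. At stage $n \geq 1$, the $(n-1)$-skeleton of the diagram under construction, together with its map to the $(n-1)$-truncation of $cA$, produces a latching map $L_n A^* \to A = cA[n]$, which I factor functorially as a cofibration $L_n A^* \to A^*[n]$ followed by a trivial fibration $A^*[n] \to A$. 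Extending the cosimplicial structure maps in the only way compatible with the latching data gives the next stage, functorially in $A$.

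By construction $A^*$ is Reedy cofibrant, and the induced map $A^* \to cA$ is a Reedy trivial fibration, hence level-wise a trivial fibration in $\C$; in particular each $A^*[n] \to A$ is a weak equivalence. Since $A^*[0] = A$ via the identity, the composite $A^*[n] \to A^*[0] \to A$ agrees with this weak equivalence with the second factor the identity, so two-out-of-three forces the structure map $A^*[n] \to A^*[0]$ to itself be a weak equivalence. Proposition \ref{framingcharacterisation} then identifies $A^*$ as a cosimplicial frame with $A^*[0] \cong A$, and functoriality in $A$ is inherited from the functorial factorisation used at each inductive step.

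The simplicial case is formally dual: starting from a fibrant $A$ and the constant simplicial object $cA_\bullet$, I perform a functorial Reedy fibrant replacement, setting $A_*[0] := A$ at level $0$ (valid because $A$ is already fibrant, so $A \to \ast$ is a fibration and the $0$th matching map is trivial) and factoring matching maps functorially at higher levels. The main technical obstacle in both constructions is insisting that the $0$th level equal $A$ on the nose, rather than merely up to weak equivalence; this is the only reason one must depart from a black-box application of the functorial Reedy replacement and do the induction by hand, and it is exactly what the (co)fibrancy hypothesis on $A$ is needed for. Once this is arranged, the rest of the verification is a direct application of the latching/matching formalism and Proposition \ref{framingcharacterisation}.
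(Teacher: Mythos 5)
Your overall strategy --- an inductive functorial factorisation in the Reedy model structure on $\C^\Delta$, with level zero pinned to $A$ by hand and Proposition \ref{framingcharacterisation} invoked at the end --- is exactly the route the paper sketches (it is Hovey's Theorem 5.2.8, which factors the canonical map $l^\bullet A \to r^\bullet A$ into a cofibration followed by a trivial fibration). However, there is a genuine gap in your inductive step. At stage $n$ you factor only the composite $L_n A^* \to cA[n] = A$ and then assert that the cosimplicial structure maps extend ``in the only way compatible with the latching data''. This is not true. The latching map $L_n A^* \to A^*[n]$ encodes the cofaces into level $n$, but the codegeneracies out of level $n$ --- equivalently the map $A^*[n] \to M_n A^*$ to the matching object, which must factor the canonical map $L_n A^* \to M_n A^*$ --- are independent data that your factorisation never produces and that are not determined by the cofaces. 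Already for $n=2$ the two codegeneracies $A^*[2] \to A^*[1]$ have to be chosen. Without them you have not constructed a cosimplicial object at all; and if you tried to manufacture them afterwards by a lifting argument (cofibration $L_n A^* \to A^*[n]$ against the trivial fibration $M_n A^* \to M_n cA$) you would lose functoriality, which is the whole point of the theorem. Relatedly, your claim that $A^* \to cA$ is a \emph{Reedy} trivial fibration does not follow from what you built --- your factorisation only gives a levelwise trivial fibration, which happens to be all you need for the two-out-of-three step, but the Reedy assertion is unjustified as stated.

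The repair is standard Reedy theory and brings you back to the paper's proof: at stage $n\ge 1$ factor the canonical latching-to-matching map
\[
L_n A^* \longrightarrow M_n A^* \times_{M_n cA} cA[n] \;\cong\; M_n A^*
\]
(the identification holds because $M_n cA \cong A \cong cA[n]$ for $n \ge 1$) functorially as a cofibration followed by a trivial fibration. This single factorisation supplies the cofaces and the codegeneracies simultaneously, makes $A^*$ Reedy cofibrant and $A^* \to cA$ a genuine Reedy trivial fibration, and is functorial because the factorisation is. For $n=1$ it amounts to choosing a functorial cylinder object, i.e.\ factoring the fold map $A \sqcup A \to A$. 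The dual correction (factor the matching-to-latching data, not merely the map out of $cA[n]$) is needed in the simplicial half. With this change your argument is correct and coincides with the one the paper cites.
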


\begin{definition}
A functor $A \mapsto A^*$ together with a functor $A \mapsto A_*$ satisfying the conditions of Theorem \ref{existence} is called a \textbf{framing} of $\C$.
\end{definition}

The idea of the proof is to obtain the framing functor $(-)^*$ from a functorial factorisation in $\C^\Delta$ as a cosimplicial framing: a cosimplicial frame on $A$ can be viewed as the factorisation of a certain map into a cofibration followed by a trivial fibration. This map $l^\bullet A \longrightarrow r^\bullet A$, where $l^\bullet A$ is a cosimplicial object built from latching spaces and $r^\bullet A$ is the constant cosimplicial object \cite[Example 5.2.4]{Hov99}.  However, this factorisation has to be inductively set up to ensure that the cosimplicial frame $A^*$ has the correct object in level zero. This is \cite[Theorem 5.2.8]{Hov99}.

This also means that two framings of the same object $A \in \C$ are naturally weakly equivalent in $\C^\Delta$, see also \cite[Lemma 5.5.1]{Hov99}. Let $A^\circ$ be another cosimplicial frame of $A$. We consider the commutative square
\[
\xymatrix{ l^\bullet A \ar@{>->}[d]\ar[r]& A^\circ \ar@{->>}^\sim[d] \\
A^* \ar_\sim[r] \ar@{.>}[ur]& r^\bullet A
}
\]
Because the left vertical arrow is a cofibration and the right one a trivial fibration, there exists a lift in the diagram. Because of the 2-out-of-3 axiom this lift is also a weak equivalence. Hence every framing can be compared to the one obtained functorially.

The same is also true in the simplicial case if we view a simplicial frame $A_*$ as the factorisation of the canonical map $l_\bullet A \longrightarrow r_\bullet A$ into a cofibration that is a weak equivalence followed by a fibration in $\C^{\Delta^{op}}$.

\bigskip
Let us now look at a standard example of a framing.

\begin{ex}\label{simplicialexample} Let $\C$ be a simplicial category and $A \in \C$. Then $$A^\bullet = A \tensor \Delta[-],$$ i.e. the canonical cosimplicial object with $A^\bullet [n] = A \tensor \Delta[n]$, is a cosimplicial frame for $A$ by \cite[Proposition 16.1.3 and Proposition 16.6.4]{Hir03} and \cite[Remark 5.2.10]{Hov99}. In particular, for a simplicial set $K$
\[
A^\bullet\tensor K \cong A \ssetsmash K.
\]
Because any two framings of the same object $A \in \C$ are weakly equivalent (as shown above), 
for a cosimplicial frame $B^\bullet$ and a simplicial set $K$ we have that
\[
B^\bullet \tensor K \cong B^\bullet[0] \ssetsmash K.
\]
Dually, $A_\bullet$ with $A_\bullet[n]=A^{\Delta[n]}$ is a simplicial frame for $A$ \cite[Proposition 16.6.4]{Hir03}. Any simplicial frame $B_\bullet$  will satisfy
\[
B_\bullet^K \cong B_\bullet[0]^K.
\]
\end{ex}

Together with the framing functors $A \mapsto A^*$ and $A \mapsto A_*$ of Theorem \ref{existence} one obtains bifunctors
\[
\begin{array}{r@{\quad : \quad}ll}
- \tensor - & \C \times \sset \longrightarrow \C, & (A,K) \mapsto A^* \tensor K  \\
\map_l(-,-) & \C^{op} \times \C \longrightarrow \sset, & (A,B) \mapsto \C(A^*,B)  \\
(-)^{(-)} &  \sset^{op} \times \C \longrightarrow \C, & (A,K) \mapsto A_*^K  \\
\map_r(-,-) & \C^{op} \times \C \longrightarrow \sset, & (A,B) \mapsto \C(A,B_*). 
\end{array}
\]
Hovey shows in \cite[Theorem 5.4.9]{Hov99} that $$- \tensor - : \C \times \sset \longrightarrow \C$$ and $$(-)^{(-)}:  \sset \times \C^{op} \longrightarrow \C^{op}$$ (with the opposite model structure) have total left derived functors. However, these functors do not form a Quillen adjunction of two variables as the two right adjoints $\map_l$ and $\map_r$ do not generally agree: they only agree up to a zig-zag of weak equivalences in $\C$ \cite[Proposition 5.4.7]{Hov99}.

However, this means the right derived mapping spaces $R\map_l$ and $R\map_r$ agree. Hence we at least have an adjunction of two variables
\[
(-\tensor^L -, R(-)^{(-)}, R\map(-,-)):\,\, \Ho(\C) \times \Ho(\sset) \longrightarrow \Ho(\C).
\]

We also note that the functor $- \tensor -$ is not, in general, associative. This defect is also 
removed upon passage to the homotopy category. Hovey details the construction of a particular
associativity weak equivalence and thus comes to the following result \cite[Theorem 5.5.3]{Hov99}.

 \begin{theorem}[Hovey]\label{homotopymodule}
 The framing functor of Theorem \ref{existence} makes $\Ho(\C)$ into a closed $\Ho(\sset)$--module category.
 \end{theorem}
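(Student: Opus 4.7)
The plan is to verify the structure laid out in Definition \ref{Dmodule} and its coherences for the pair $(\Ho(\C), \Ho(\sset))$ with $\tensor^L$ as the action. The adjunction of two variables on the homotopy level is already in place from the discussion immediately preceding the theorem, so what remains is to construct a natural unit isomorphism $A \tensor^L S^0 \cong A$ in $\Ho(\C)$, a natural associativity isomorphism $(A \tensor^L K) \tensor^L L \cong A \tensor^L (K \sframe L)$, and then to verify the triangle and pentagon coherences.

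The unit isomorphism is essentially automatic: the first bullet recalled for cosimplicial frames gives $A^\bullet \tensor \Delta[n] = A^\bullet[n]$, so taking $n = 0$ and using Example \ref{simplicialexample} yields $A^* \tensor S^0 = A^*[0] \cong A$ naturally in any cofibrant $A$. This descends to the required isomorphism in $\Ho(\C)$, and the triangle coherence then becomes a routine verification once associativity is in place.

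The associativity isomorphism is the substantive construction. The difficulty is that at the point-set level $(A \tensor^L K) \tensor^L L$ involves framing twice (first $A$, then $A^* \tensor K$) while $A \tensor^L (K \sframe L)$ only involves framing $A$. The key device is to use the canonical cosimplicial frame of $K$ in $\sset$, which by Example \ref{simplicialexample} is $K \sframe \Delta[-]$, and to apply $A^* \tensor -$ so as to produce the cosimplicial object $[n] \mapsto A^* \tensor (K \sframe \Delta[n])$ in $\C$. I would verify via Proposition \ref{framingcharacterisation} that this is a cosimplicial frame of $A^* \tensor K$: Reedy cofibrancy comes from the left Quillen bifunctor $A^\bullet \tensor -$ applied to a Reedy cofibrant input, and each structure map to level $0$ is a weak equivalence since $A^* \tensor -$ preserves weak equivalences between cofibrant objects. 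By the essential uniqueness of frames discussed after Theorem \ref{existence}, this cosimplicial frame is then naturally weakly equivalent in $\C^\Delta$ to the functorial frame $(A^* \tensor K)^*$. Evaluating both frames at $L$ and passing to $\Ho(\C)$ yields the desired natural isomorphism.

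The main obstacle is verifying the pentagon coherence, where one must compare five iterated tensor products whose point-set representatives involve multiple independent framing steps. My strategy is to reduce the diagram to the pentagon already known in $\sset$ by showing that each edge of the pentagon in $\Ho(\C)$ is built from the same uniqueness-of-frames comparison applied in slightly different orders. Because any two frames of the same object are connected by a morphism that is unique up to weak equivalence in $\C^\Delta$, the a priori ambiguity disappears upon passage to $\Ho(\C)$. This uniqueness in $\Ho(\C^\Delta)$, together with the explicit associativity weak equivalence that Hovey constructs, is the technical input that must be carefully invoked to secure both the pentagon and the triangle.
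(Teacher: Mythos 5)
Your proposal is correct and follows essentially the same route as the paper, which defers to Hovey's Theorem 5.5.3 but spells out exactly this method in its proof of the $E$--local analogue (Theorem \ref{Emodule}): the associativity map is obtained by recognising $[n]\mapsto A^*\tensor(K\ssetsmash\Delta[n])$ as a cosimplicial frame on $A^*\tensor K$ and comparing it with the functorial frame, and all naturality and coherence diagrams are checked by observing that they commute in cosimplicial level zero and invoking the level-zero agreement lemma (the paper's Lemma \ref{levelzero}, Hovey's Lemma 5.5.2). Your appeal to ``uniqueness of frame comparisons up to weak equivalence'' is exactly that lemma in slightly looser clothing, and the only point you gloss over --- naturality of the associativity isomorphism in $A$ and $K$, not just $L$ --- is handled by the same device.
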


It is worth noting that for a simplicial model category $\C$, the $\Ho(\sset)$--module structure coming from framings agrees with the $\Ho(\sset)$--module structure derived from the simplicial structure \cite[Theorem 5.6.2]{Hov99}.

\section{\texorpdfstring{$E$}{E}--local cosimplicial frames}\label{sec:Elocalframings}

In this section we look at those framings that factor over $E$--local simplicial sets and establish the $E$--local analogues of the known results from the previous section.

The categories $\sset$ and $L_E \sset$ are identical as categories, so there is still a bijection between cosimplicial objects in a category $\C$ and adjunctions between $\C$ and $L_E \sset$ as before. However, we would like to look at those adjunctions that respect the $E$--local model structure on simplicial sets rather than the canonical one.

\begin{definition} We say that $A^\bullet \in \C^\Delta$ is an \textbf{$E$--local cosimplicial frame} if
\[
A^\bullet \tensor -: L_E\sset \,\,\raisebox{-0.1\height}{$\overrightarrow{\longleftarrow}$}\,\, \C: \C(A^\bullet,-)
\]
is a Quillen adjunction. We say that $A_\bullet \in \C^{\Delta^{op}}$ is a \textbf{$E$--local simplicial frame} if
\[
A_\bullet^{(-)}: L_E \sset^{op} \,\,\raisebox{-0.1\height}{$\overleftarrow{\longrightarrow}$}\,\, \C : \C(-,A_\bullet)
\]
is a Quillen adjunction.
\end{definition}

In particular this means that an $E$--local cosimplicial frame is a cosimplicial frame that factors over $L_E \sset$. We will use this definition later to specify for which model categories the $\Ho(\sset)$--action from Theorem \ref{homotopymodule} factors over a $\Ho(L_E \sset)$--action. Theorem \ref{Efamiliarcharacterisation} will say that this is the case if and only if all mapping spaces are $E$--local, or equivalently, if and only if every cosimplicial frame is $E$--local in the above sense. 

\bigskip

\begin{definition}
We say that a model category $\C$ is \textbf{$E$--familiar} if every cosimplicial frame $A^\bullet \in \C^\Delta$ is also an $E$--local cosimplicial frame and also if every simplicial frame $A_\bullet \in \C^{\Delta^{op}}$ is an $E$--local simplicial frame.
\end{definition}

\begin{corollary}\label{Eexistence}
Let $\C$ be an $E$--familiar model category. Then the framing functor $$\C \longrightarrow \C^\Delta$$ of Theorem \ref{existence} assigns to each cofibrant $A \in \C$ an $E$--local cosimplicial frame $A^*$ with $A^*[0] \cong A$.
\end{corollary}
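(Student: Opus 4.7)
The plan is essentially an immediate application of the definitions combined with Theorem \ref{existence}. I would proceed as follows.

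First I would invoke Theorem \ref{existence} to produce, for each cofibrant $A \in \C$, a cosimplicial frame $A^* \in \C^\Delta$ with $A^*[0] \cong A$, obtained via the functorial framing $\C \longrightarrow \C^\Delta$. At this stage we already have the identification $A^*[0] \cong A$ and we know that
\[
A^* \tensor - \co \sset \lradjunction \C \co \C(A^*,-)
\]
is a Quillen adjunction with respect to the standard model structure on $\sset$.

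Next I would upgrade this Quillen adjunction to one over $L_E \sset$. By the hypothesis that $\C$ is $E$--familiar, every cosimplicial frame in $\C^\Delta$ is, by definition, an $E$--local cosimplicial frame. Applying this to the particular cosimplicial frame $A^*$ produced above immediately gives that
\[
A^* \tensor - \co L_E \sset \lradjunction \C \co \C(A^*,-)
\]
is a Quillen adjunction, which is exactly the claim that $A^*$ is an $E$--local cosimplicial frame.

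There is no real obstacle in this corollary; the content is entirely absorbed into the definition of $E$--familiar, with Theorem \ref{existence} supplying the existence of the functor and the level-zero identification. I would therefore present the proof as a one-line consequence, noting that the substantive work (verifying that every cosimplicial frame in an $E$--familiar model category factors through $L_E \sset$) is deferred to the characterisation mentioned in the preceding paragraph and to Theorem \ref{Efamiliarcharacterisation} cited just before the definition of $E$--familiar.
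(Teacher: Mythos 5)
Your argument is correct and is exactly the reasoning the paper intends: the corollary is stated with no proof precisely because it is an immediate consequence of Theorem \ref{existence} together with the definition of $E$--familiarity, which is what you spell out. Nothing further is needed.
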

\qed

Combining the framing functors $A \mapsto A^*$ and $A \mapsto A_*$ with the  
adjunctions
$$A^*: L_E \sset  \,\,\raisebox{-0.1\height}{$\overrightarrow{\longleftarrow}$}\,\, \C: \C(A^*,-) \,\,\,\,\mbox{and}\,\,\,\, A_*: L_E \sset  \,\,\raisebox{-0.1\height}{$\overleftarrow{\longrightarrow}$}\,\, \C: \C(-,A_*)$$ again gives rise to bifunctors
\[
\begin{array}{r@{\quad : \quad}ll}
- \tensor - & \C \times L_E \sset \longrightarrow \C, & (A,K) \mapsto A^* \tensor K  \\
\map_l(-,-)& \C^{op} \times \C \longrightarrow L_E \sset, & (A,B) \mapsto \C(A^*,B)  \\
(-)^{(-)}&  L_E \sset^{op} \times \C \longrightarrow \C, & (A,K) \mapsto A_*^K   \\
\map_r(-,-)& \C^{op} \times \C \longrightarrow L_E \sset, & (A,B) \mapsto \C(A,B_*). 
\end{array}
\]
It is now not difficult to establish an $E$--local analogue of the corresponding results in the previous section. First, let us work towards derived functors of the above. 

\begin{lemma}
Let $f: A^\bullet \longrightarrow B^\bullet$ be a morphism in $\C^\Delta$ and $g: K \longrightarrow L$ a morphism of simplicial sets. Consider the pushout-product
\[
f \Box g: Q :=(B^\bullet \tensor K) \coprod\limits_{A^\bullet \tensor K} (A^\bullet \tensor L) \longrightarrow B \tensor L.
\]
Then $f\Box g$ is a cofibration if both $f$ and $g$ are cofibrations. If $f$ is a trivial cofibration, then $f\Box g$ is a trivial cofibration. If $A^\bullet \in \C^\Delta$ is furthermore cofibrant and $g$ is a trivial cofibration, then $f\Box g$ is a trivial cofibration.

Dually, consider a morphism $p: A_\bullet \longrightarrow B_\bullet$ of simplicial objects in $\C$. Then the map
\[
\Hom_\Box(g,p): A_\bullet^L \longrightarrow A_\bullet^K \times_{B_\bullet^K} B_\bullet^L
\]
is a fibration if both $p$ and $g$ are. If in addition $p$ is an acyclic fibration, then so is $\Hom_\Box(g,p)$. If $B_\bullet$ is fibrant, $p$ is an acyclic fibration and $g$ is a fibration, then $\Hom_\Box(g,p)$ is an acyclic fibration.
\end{lemma}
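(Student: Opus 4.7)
My plan is to treat the three cosimplicial statements as instances of a pushout--product axiom for the bifunctor $-\tensor - \co \C^\Delta \times \sset \to \C$, with $\C^\Delta$ carrying the Reedy model structure. The key computational input, which I would record first, is the identification of the pushout--product against the boundary inclusion $g_n \co \partial\Delta[n] \hookrightarrow \Delta[n]$ with the $n$-th relative latching map of $f \co A^\bullet \to B^\bullet$:
\[
f \Box g_n \;\cong\; \bigl(\, A^\bullet[n] \cup_{L_n A^\bullet} L_n B^\bullet \longrightarrow B^\bullet[n] \,\bigr).
\]
This follows from the fact recalled before the lemma that $A^\bullet \tensor \partial\Delta[n] \to A^\bullet \tensor \Delta[n]$ is the $n$-th latching map of $A^\bullet$, together with $A^\bullet \tensor -$ preserving colimits.

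For parts (i) and (ii), the Reedy characterisation of (trivial) cofibrations in $\C^\Delta$ says precisely that $f$ is a Reedy cofibration (respectively trivial cofibration) iff each of its relative latching maps is a cofibration (respectively trivial cofibration) in $\C$. This gives the conclusion when $g = g_n$. For a general cofibration $g \co K \to L$ in $\sset$, write $g$ as a transfinite composition of pushouts of the $g_n$; since $f \Box -$ preserves colimits in $g$ (both the pushout source and the target $B^\bullet \tensor L$ are colimit-preserving in $L$), the closure of cofibrations (and of trivial cofibrations, when $f$ is trivial) in $\C$ under pushouts, transfinite compositions and retracts propagates the conclusion to all cofibrations $g$.

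For part (iii) the trivial map is $g$ rather than $f$, so a different line is needed. From $A^\bullet$ Reedy cofibrant and $f$ a Reedy cofibration, $B^\bullet$ is also Reedy cofibrant, and in this section's context both may be upgraded to cosimplicial frames (Proposition \ref{framingcharacterisation}); hence $A^\bullet \tensor -$ and $B^\bullet \tensor -$ are left Quillen from $\sset$ (or from $L_E \sset$, in the $E$--familiar setting) to $\C$. Then $A^\bullet \tensor g$ and $B^\bullet \tensor g$ are trivial cofibrations. Pushing the first along $f \tensor K$ yields a trivial cofibration
\[
B^\bullet \tensor K \longrightarrow (A^\bullet \tensor L) \cup_{A^\bullet \tensor K} (B^\bullet \tensor K),
\]
whose composition with $f \Box g$ equals $B^\bullet \tensor g$, itself a weak equivalence. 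Two-out-of-three forces $f \Box g$ to be a weak equivalence, and it is a cofibration by part (i), hence a trivial cofibration.

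The dual statement about $\Hom_\Box(g,p)$ follows by applying the same reasoning in $\C^{op}$ with its opposite Reedy model structure: cofibrations and fibrations, latching and matching objects, pushouts and pullbacks, and cosimplicial and simplicial frames all swap roles, with $\Hom_\Box(g,p)$ the right-adjoint dual of $f \Box g$. The main obstacle I anticipate lies in part (iii): mere Reedy cofibrancy of $A^\bullet$ only ensures that $A^\bullet \tensor -$ preserves cofibrations, not trivial cofibrations, since the latter requires the structure-map condition of Proposition \ref{framingcharacterisation}. Thus ``cofibrant'' in (iii) is best read as ``cosimplicial frame'', which is natural in the context of $E$--familiar framings but is the one place the argument requires genuine care.
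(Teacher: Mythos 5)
Your proof is correct and follows essentially the same route as the paper: the paper disposes of the first two cases by citing Hovey's Proposition 5.4.1 (applicable because $L_E \sset$ and $\sset$ have the same cofibrations), which your latching-map identification plus cell induction simply unpacks, and your argument for the third case (pushing out $A^\bullet \tensor g$ along $f \tensor K$ and applying two-out-of-three against $B^\bullet \tensor g$) is precisely the paper's, as is the appeal to duality for the simplicial half. Your closing caveat is also well taken: the paper's proof of the third case uses that $A^\bullet \tensor -$ and $B^\bullet \tensor -$ are left Quillen from $L_E \sset$, which requires $A^\bullet$ and $B^\bullet$ to be cosimplicial frames in an $E$--familiar category rather than merely Reedy cofibrant objects, exactly as you observe.
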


\begin{proof}

For the case of $g$ being a cofibration and $f$ either a cofibration or trivial cofibration this is \cite[Proposition 5.4.1]{Hov99} because the cofibrations in $L_E \sset$ and $\sset$ are the same.

Now let $g: K \stackrel{\sim}{\hookrightarrow} L$ be a trivial cofibration in $L_E \sset$. Then $A^\bullet \tensor -$ and $B^\bullet \tensor -$ are left Quillen functors between $L_E \sset$ and $\C$ by assumption, as $\C$ is $E$--familiar. The rest of the proof proceeds the same way as \cite[Proposition 5.4.3]{Hov99}. Since it is the pushout of a trivial cofibration, the map
\[
B^\bullet \tensor K \longrightarrow Q
\]
is a trivial cofibration. Since $B^\bullet \tensor K \longrightarrow B^\bullet \tensor L$ is also a trivial cofibration, the cofibration $Q \hookrightarrow B^\bullet \tensor L$ must be trivial by the 2-out-of-3 axiom.

The case of $(-)^{(-)}$ follows by duality, analogously to \cite[Theorem 16.5.7]{Hir03}.
\end{proof}

For the existence of a total left derived functor it suffices to show that the functor sends trivial cofibrations between cofibrant objects to weak equivalences \cite[Proposition 8.4.4]{Hir03}. Hence we arrive at the following.

\begin{corollary}\label{Ederive}
Let $\C$ be an $E$--familiar model category. Then the functors
\[
- \tensor - : \C \times L_E \sset \longrightarrow \C
\]
and
\[
(-)^{(-)}:  L_E \sset \times \C^{op} \longrightarrow \C^{op}
\]
possess total left derived functors.
\end{corollary}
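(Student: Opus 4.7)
The plan is straightforward given the Lemma: apply the criterion of \cite[Proposition 8.4.4]{Hir03} announced in the paragraph preceding the corollary, which asserts that a functor possesses a total left derived functor whenever it sends trivial cofibrations between cofibrant objects to weak equivalences.

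For $- \tensor - \co \C \times L_E \sset \to \C$, I would take a trivial cofibration $(f, g) \co (A, K) \to (B, L)$ between cofibrant objects of the product (so $f$ is a trivial cofibration between cofibrants in $\C$, and $g$ a trivial cofibration in $L_E \sset$, every simplicial set being cofibrant there) and factor the induced map as
\[
A^* \tensor K \xrightarrow{A^* \tensor g} A^* \tensor L \xrightarrow{f^* \tensor L} B^* \tensor L.
\]
Both legs will be shown to be trivial cofibrations in $\C$ by direct application of the Lemma. The first arrow is the pushout-product of the cofibration $\emptyset \to A^*$ in $\C^\Delta$ with the trivial cofibration $g$ in $L_E \sset$; since the codomain $A^*$ is cofibrant by Proposition \ref{framingcharacterisation}, the third clause of the Lemma applies. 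The second arrow is the pushout-product of the Reedy trivial cofibration $f^* \co A^* \to B^*$, obtained from functoriality of the framing of Theorem \ref{existence}, with the cofibration $\emptyset \to L$; here the second clause of the Lemma applies. The composite is then a weak equivalence.

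The argument for $(-)^{(-)} \co L_E \sset \times \C^{op} \to \C^{op}$ is formally dual, using the pullback-exponential half of the Lemma. For a trivial cofibration $(g, f^{op}) \co (K, A) \to (L, B)$ between cofibrants, which is to say $g$ a trivial cofibration in $L_E \sset$ and $f \co B \to A$ a trivial fibration between fibrants in $\C$, I would factor the induced map $B_*^L \to A_*^K$ in $\C$ through $A_*^L$. Each leg is then identified as a trivial fibration by symmetric applications of the pullback-exponential clauses: one by pairing $A_*$ (which is Reedy fibrant) with $g$, and the other by pairing the Reedy trivial fibration $f_* \co B_* \to A_*$ with a cofibration of simplicial sets.

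The only point requiring vigilance is an implicit appeal to functoriality of the framings of Theorem \ref{existence}: a trivial cofibration between cofibrants in $\C$ should induce a Reedy trivial cofibration of cosimplicial frames, and dually a trivial fibration between fibrants should induce a Reedy trivial fibration of simplicial frames. Both facts follow from the functorial Reedy factorisation construction underlying \cite[Theorem 5.2.8]{Hov99} and present no genuine obstacle.
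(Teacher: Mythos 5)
Your overall strategy matches the paper's: the paper offers no argument beyond the sentence citing \cite[Proposition 8.4.4]{Hir03} together with the preceding pushout-product Lemma, so the two-leg factorisation you spell out is exactly the intended content. There is, however, a genuine gap in your second leg. To invoke the second clause of the Lemma you need $f^* \co A^* \to B^*$ to be a Reedy trivial cofibration, and you assert that this ``follows from the functorial Reedy factorisation construction'' underlying \cite[Theorem 5.2.8]{Hov99}. It does not: applying a functorial factorisation naturally to a square does not send (trivial) cofibrations between the inputs to (trivial) cofibrations between the factored objects, and Hovey proves no such preservation statement for the framing functor. (Indeed, Hovey's Lemma 5.5.1 and Lemma \ref{levelzero} of this paper exist precisely because maps of frames can only be controlled in level zero, not as Reedy cofibrations.) The same unjustified claim appears in your dual argument for $(-)^{(-)}$, where you need $f_* \co B_* \to A_*$ to be a Reedy trivial fibration.

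The repair is standard but uses a different mechanism from the one you cite. Since $f \co A \to B$ is a weak equivalence and the costructure maps $A^*[n] \to A^*[0] \cong A$ and $B^*[n] \to B^*[0] \cong B$ are weak equivalences by Proposition \ref{framingcharacterisation}, the two-out-of-three axiom shows that $f^*$ is a levelwise, hence Reedy, weak equivalence between Reedy cofibrant objects. The functor $- \tensor L \co \C^{\Delta} \to \C$ sends Reedy trivial cofibrations to trivial cofibrations (the second clause of the Lemma applied with $g$ the cofibration $* \to L$), so Ken Brown's lemma gives that $f^* \tensor L$ is a weak equivalence --- which is all that Hirschhorn's criterion requires of that leg; it need not be a trivial cofibration. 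With this substitution, and its dual for simplicial frames, your proof goes through. Your first leg is fine, though note that what actually makes $A^* \tensor g$ a trivial cofibration is that $A^*$ is an $E$--local cosimplicial frame (by $E$--familiarity), not merely that some object in the pushout-product is cofibrant.
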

\qed

To distinguish between the derived functors of 
\[
- \tensor - : \C \times \sset \longrightarrow \C
\,\,\,\mbox{and}\,\,\,
- \tensor - : \C \times L_E\sset \longrightarrow \C
\]
we denote the latter by $\tensor^L_E$.

\bigskip
Let $\C$ be an $E$--familiar model category. Together with \cite[Theorem 5.4.9]{Hov99} we obtain
\begin{corollary}
The above derives to an adjunction of two variables
\[
(-\tensor^L_E -, R(-)^{(-)}, R\map(-,-)): \,\, \Ho(L_E \sset) \times \Ho(\C) \longrightarrow \Ho(\C).
\]
\end{corollary}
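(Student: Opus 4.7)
The plan is to mirror the proof of \cite[Theorem 5.4.9]{Hov99} in the $E$--local setting, using $E$--familiarity precisely at the points where one needs to promote statements about $\sset$ to statements about $L_E\sset$. By Corollary \ref{Ederive} the two ``left'' functors $-\tensor^L_E -$ and $R(-)^{(-)}$ already exist, so the remaining work is to construct $R\map(-,-)\co \Ho(\C)^{op}\times\Ho(\C)\to\Ho(L_E\sset)$ and check the two adjunction bijections at the level of the homotopy categories.

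First I would check that for cofibrant $A$ and fibrant $B$ in $\C$, the simplicial sets $\map_l(A,B)=\C(A^*,B)$ and $\map_r(A,B)=\C(A,B_*)$ are not merely Kan complexes but are in fact $E$--local. This is where $E$--familiarity is used: the cosimplicial frame $A^*$ makes $A^*\tensor -\co L_E\sset \lradjunction \C \co \C(A^*,-)$ into a Quillen adjunction, so the right adjoint $\C(A^*,-)$ sends fibrant objects of $\C$ to fibrant objects of $L_E\sset$, i.e.\ to $E$--local Kan complexes by Corollary \ref{cor:localmappingspace}. The simplicial case is dual. Hence both bifunctors take values in $E$--fibrant simplicial sets on cofibrant--fibrant pairs, and the preceding pushout--product/pullback--hom lemma shows they take weak equivalences between such pairs to $E$--local weak equivalences, so each has a well--defined right derived functor landing in $\Ho(L_E\sset)$.

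Next I would show the two right derived functors coincide. The zig--zag of natural weak equivalences comparing $\map_l(A,B)$ and $\map_r(A,B)$ constructed in \cite[Proposition 5.4.7]{Hov99} uses only the framing axioms and the 2--out--of--3 property, so it applies verbatim here; after inverting weak equivalences we obtain a single functor $R\map(-,-)\co \Ho(\C)^{op}\times\Ho(\C)\to \Ho(L_E\sset)$. The two adjunction bijections
\[
\Ho(\C)(A\tensor^L_E K,B)\cong \Ho(L_E\sset)(K,R\map(A,B))\cong \Ho(\C)(A,RB^{(K)})
\]
are then obtained by taking cofibrant replacements of $A$ and $K$, a fibrant replacement of $B$, and applying the point--set adjunction identities defining $A^*\tensor -$, $\C(A^*,-)$ and $B_*^{(-)}$, $\C(-,B_*)$. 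These bijections are natural in the framings, and since any two framings are connected by a zig--zag of weak equivalences in $\C^\Delta$ and $\C^{\Delta^{op}}$ (as recalled after Theorem \ref{existence}), the bijections pass to the homotopy categories.

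The main technical point, and the only place the argument genuinely differs from Hovey's, is the verification that $\map_l$ and $\map_r$ land in $E$--fibrant simplicial sets; everything else is bookkeeping. Once one has this, the coherence isomorphisms (associativity and unit) for the $\Ho(L_E\sset)$--module structure are inherited from the $\Ho(\sset)$--module structure of Theorem \ref{homotopymodule} via the canonical functor $\Ho(\sset)\to\Ho(L_E\sset)$, so no additional coherence argument is required.
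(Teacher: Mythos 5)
Your argument is correct and is essentially the paper's own (the paper gives no written proof, treating the corollary as immediate from Corollary \ref{Ederive} together with \cite[Theorem 5.4.9]{Hov99} and the fact, recalled at the end of Section \ref{sec:framings}, that $R\map_l$ and $R\map_r$ agree); your key observation that $E$--familiarity forces $\C(A^*,B)$ and $\C(A,B_*)$ to be $E$--fibrant is exactly the point that makes $R\map$ land in $\Ho(L_E\sset)$. Your closing remark about inheriting the associativity and unit coherences concerns the module structure of Theorem \ref{Emodule}, not this corollary, and is stated a little too glibly --- the paper still verifies those coherence diagrams via Lemma \ref{levelzero} --- but this does not affect the proof of the statement at hand.
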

\qed

We recall that a closed module structure on a category consists of an adjunction of two variables, a unit isomorphism and an associativity isomorphism, see Definition \ref{Dmodule}. In our case, the above corollary is the first major step towards the following theorem. For this, we first need to state a lemma like \cite[Lemma 5.5.2]{Hov99}. In fact, there is nothing to prove in our case, as a cofibrant replacement functor in $\sset$ is also one in $L_E \sset$.

\begin{lemma}\label{levelzero}
Let $\C$ be $E$--familiar and $A \in \C$ cofibrant. Let $A^\bullet$ and $B^\bullet$ be cosimplicial frames for $A$. If two maps
\[
f: A^\bullet \longrightarrow B^\bullet
\]
agree on level zero, then their derived natural transformations
\[
A^\bullet \tensor^L_E K \longrightarrow B^\bullet \tensor^L_E K
\]
agree.
\end{lemma}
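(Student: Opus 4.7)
The plan is to reduce the statement directly to its non-local counterpart, Hovey's Lemma 5.5.2. The essential observation, already hinted at in the statement, is that nothing changes on the $\C$--side: the weak equivalences and the underlying tensoring $-\tensor -: \C^\Delta \times \sset \longrightarrow \C$ are exactly the same whether we view its second variable as living in $\sset$ or in $L_E\sset$. What changes is only how we compute a cofibrant replacement of $K$, and here the key point is that a cofibrant replacement functor $Q$ for $\sset$ automatically serves as one for $L_E\sset$: the cofibrations of the two model structures coincide, and every weak equivalence of $\sset$ is a weak equivalence of $L_E\sset$, so if $QK \stackrel{\sim}{\to} K$ is a cofibrant replacement in $\sset$ then it is also a cofibrant replacement in $L_E\sset$.

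With this in hand, the first step is to unpack what the derived natural transformation at $K \in L_E\sset$ actually is. By Corollary \ref{Ederive} (and its proof via the usual ``cofibrant replace, then apply'' recipe), the derived natural transformation $A^\bullet \tensor^L_E K \longrightarrow B^\bullet \tensor^L_E K$ induced by $f$ is represented in $\Ho(\C)$ by the map
\[
f \tensor \id_{QK} \co A^\bullet \tensor QK \longrightarrow B^\bullet \tensor QK,
\]
where $QK$ is any cofibrant replacement in $L_E\sset$. By the preceding observation we may take $QK$ to be a cofibrant replacement of $K$ in $\sset$, and then this is precisely the formula used to represent the non-local derived natural transformation $A^\bullet \tensor^L K \longrightarrow B^\bullet \tensor^L K$.

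The second step is simply to quote Hovey's Lemma 5.5.2: since $f$ and $g$ are two maps of cosimplicial frames on the same object $A$ which agree at level zero, the maps $f \tensor \id_{QK}$ and $g \tensor \id_{QK}$ agree in $\Ho(\C)$. Because this conclusion is formulated entirely in $\Ho(\C)$, and $\Ho(\C)$ is defined by inverting the same class of weak equivalences regardless of whether we think of $K$ as an object of $\sset$ or $L_E\sset$, the equality passes without change to the $E$--local derived natural transformations. There is no real obstacle here; the only point requiring mild care is the bookkeeping that confirms the chosen cofibrant replacement represents both derived functors, which is exactly why $E$--familiarity is built so that cofibrant replacements in $\sset$ and $L_E\sset$ can be taken compatibly.
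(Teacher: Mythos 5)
Your proposal is correct and is essentially the paper's own argument spelled out in detail: the paper dismisses the lemma with the remark that ``there is nothing to prove in our case, as a cofibrant replacement functor in $\sset$ is also one in $L_E \sset$,'' which is exactly your key observation, after which everything reduces to Hovey's Lemma 5.5.2 as you say.
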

\qed

\begin{theorem}\label{Emodule}
The framing given in Corollary \ref{Eexistence} makes the homotopy category of any $E$--familiar model category into a $\Ho(L_E \sset)$--module. Moreover, the module action of $\Ho(\sset)$ given in Theorem \ref{homotopymodule} factors over this $\Ho(L_E \sset)$--action.
\end{theorem}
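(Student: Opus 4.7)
The plan is to mimic Hovey's proof of \cite[Theorem 5.5.3]{Hov99} almost verbatim, checking at each step that the ingredients he uses in the case of ordinary simplicial sets have $E$--local analogues that we have already assembled. The adjunction of two variables is handed to us by the corollary following Corollary \ref{Ederive}, so what remains is to construct a unit isomorphism $X \tensor^L_E S^0 \cong X$ and an associativity isomorphism $(X \tensor^L_E K) \tensor^L_E L \cong X \tensor^L_E (K \ssetsmash L)$ on $\Ho(\C)$, and then verify the pentagon and triangle coherence conditions, before finally addressing the factorisation statement.

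First I would handle the unit. For any cofibrant $A \in \C$, the cosimplicial frame $A^*$ satisfies $A^* \tensor \Delta[0] = A^*[0] \cong A$; this is a point-set equality and survives passage to $\Ho(\C)$ because $\Delta[0] = S^0$ is the unit of both $\sset$ and $L_E\sset$ and is already cofibrant, so no extra cofibrant replacement is needed. Next comes the associativity. Given a cosimplicial frame $A^\bullet$ of a cofibrant $A$ and $E$--cofibrant simplicial sets $K$ and $L$, one wants to compare $(A^\bullet \tensor K) \tensor L$ with $A^\bullet \tensor (K \ssetsmash L)$ after deriving. The trick, exactly as in Hovey, is to observe that both expressions come from cosimplicial frames of $A^\bullet \tensor K$ (resp.\ $A^\bullet[0] \ssetsmash K \cong A \tensor K$), and these two frames agree on level zero. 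By Lemma \ref{levelzero}, which is valid in our $E$--familiar setting precisely because cofibrant replacement in $L_E\sset$ coincides with the one in $\sset$, the two derived natural transformations $A^\bullet \tensor^L_E K \tensor^L_E L \to A^\bullet \tensor^L_E (K \ssetsmash L)$ agree, giving a well-defined associativity isomorphism in $\Ho(\C)$.

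The coherence checks (pentagon for fourfold associativity and triangle relating the two ways to collapse the unit out of $X \tensor (S^0 \ssetsmash K)$) reduce by the same trick to comparing natural transformations between derived tensor products that already agree on level zero of the underlying cosimplicial frames; Lemma \ref{levelzero} once again forces the two sides of each diagram to coincide. This is essentially Hovey's argument translated verbatim, and no new input is required beyond the $E$--familiarity hypothesis, which was used to ensure that the relevant framings are honest Quillen bifunctors into $L_E\sset$ and hence admit derived functors at all.

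For the final sentence of the theorem, note that the identity functor $\sset \to L_E\sset$ is a left Quillen functor (cofibrations coincide and every weak equivalence of $\sset$ is an $E$--equivalence) whose derived functor sends $K \in \Ho(\sset)$ to its image in $\Ho(L_E\sset)$. On the pointset level, both $- \tensor -$ and its $E$--local counterpart are defined by the \emph{same} assignment $(A,K) \mapsto A^* \tensor K$ using the \emph{same} framing functor provided by Corollary \ref{Eexistence}. Since every simplicial set is cofibrant in both $\sset$ and $L_E\sset$, no discrepancy enters under cofibrant replacement on the second variable, and the derived comparison map $X \tensor^L K \to X \tensor^L_E K$ is an isomorphism in $\Ho(\C)$, natural in both variables. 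This compatibility is automatically coherent with the associativity and unit isomorphisms constructed above, because these were built from the same framing, so the $\Ho(\sset)$--action of Theorem \ref{homotopymodule} factors through the derived localisation $\Ho(\sset) \to \Ho(L_E\sset)$ as claimed. The main obstacle, if any, is really the bookkeeping in the associativity and coherence diagrams; the $E$--local content has already been isolated in Lemma \ref{levelzero} and Corollary \ref{Ederive}, which is precisely why this reduction to Hovey's argument works.
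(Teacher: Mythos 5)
Your proposal is correct and follows essentially the same route as the paper: both reduce to Hovey's proof of \cite[Theorem 5.5.3]{Hov99}, using that cofibrant replacement in $L_E\sset$ can be taken to coincide with that in $\sset$ so that Hovey's associativity weak equivalence $a$ and the composite $\tau$ carry over unchanged, and both then dispose of naturality in the remaining variables and the coherence diagrams by observing that the relevant maps of cosimplicial frames agree in level zero and invoking Lemma \ref{levelzero}. The factorisation over $\Ho(L_E\sset)$ is likewise handled identically, via the agreement of the cofibrant replacements defining $\tensor^L$ and $\tensor^L_E$.
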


\begin{proof}
The proof follows the steps of the non-local version \cite[Theorem 5.5.3]{Hov99} but with different derived functors and derived products. Hence we are not going to spell out every detail.

Remember that in a monoidal model category with product $\tensor$, the derived product is defined via
\[
X \tensor^L Y = QX \tensor QY
\]
where $Q$ is the cofibrant replacement functor.

The first step of  \cite[Theorem 5.5.3]{Hov99} is constructing a weak equivalence in $\C$
\[
a: A \tensor (K \ssetsmash L) \longrightarrow (A \tensor K) \tensor L
\]
which is natural in $L$.
Because an $E$--local framing is in particular a framing, we can use this weak equivalence for our purposes.

In the non-local case Hovey then defines the associativity isomorphism as the composite
\begin{multline}
\tau_{AKL}: QA \tensor Q(QK \ssetsmash QL) \xrightarrow{QA \tensor q} QA \tensor (QK \ssetsmash QL) \xrightarrow{a} (QA \tensor QK) \tensor QL \nonumber\\
\xrightarrow{(q \tensor QL)^{-1}} Q(QA \tensor QK) \tensor QL \nonumber
\end{multline}
where $q: QX \longrightarrow X$ is the cofibrant replacement map, both in $\C$ and $L_E \sset$.
The model categories $\sset$ and $L_E \sset$ have the same cofibrations and trivial fibrations. Thus, we can choose the cofibrant replacement functor in $L_E \sset$ to be the same as in $\sset$. Hence we define our $E$--local associativity isomorphism to be simply $\tau$ as above.

After defining this, one needs to show that $\tau$ is also natural in $A$ and $K$. (It is easy to read from the construction in \cite[Theorem 5.5.3]{Hov99} that $\tau$ is ntaural in $L$.) Then, one further needs to prove that it satisfies the fourfold associativity and unit conditions, see Definition \ref{Dmodule}. The idea for each of these steps is the same: we write down the necessary diagrams and see that they do not necessarily commute strictly in $\C$. However, they commute in $\C$ in degree zero, so by Lemma \ref{levelzero}, they commute up to homotopy and hence in $\Ho(\C)$.

\bigskip
The claim about the $\Ho(\sset)$--action on $\Ho(\C)$ factoring over this $\Ho(L_E \sset)$--action is now easy to see. The total left derived functor of a left Quillen functor $F$ is defined via applying $F$ to the cofibrant replacement of an object. Since the cofibrant replacement functors in $\sset$ and $L_E \sset$ agree, we see immediately that the diagram
\[
\xymatrix{ \Ho(\C) \times \Ho(\sset) \ar[d]_{\id_{\Ho(\C)} \times L(\id)}\ar[r]^(.6){-\tensor^L -}& \Ho(\C) \\
\Ho(\C) \times \Ho(L_E \sset) \ar[ur]_{- \tensor^L_E -}& \\
}
\]
commutes and satisfies the necessary associativity and unit conditions.
\end{proof}

\section{\texorpdfstring{$E$}{E}--familiar model categories}\label{sec:Efamiliar}

It is not difficult to find some obvious examples of $E$--familiar model categories.

\begin{lemma}
Let $\C$ be a simplicial model category. Then $\C$ is $E$--familiar if and only if it is a $L_E \sset$--module category.
\end{lemma}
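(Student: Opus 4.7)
The key reduction is Example \ref{simplicialexample}: in a simplicial $\C$, every cosimplicial frame of $A$ is weakly equivalent in $\C^\Delta$ to the standard frame $A \ssetsmash \Delta[-]$, under which $A^\bullet \tensor K \cong A \ssetsmash K$, and dually for simplicial frames. Both conditions of the lemma then become statements about the bifunctor $-\ssetsmash-$.

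For the backward direction, suppose $\C$ is an $L_E\sset$-module category, interpreted in the sense of compatibility of the $L_E \sset$-action with the model structures. For any cosimplicial frame $A^\bullet$, Proposition \ref{framingcharacterisation} gives $A := A^\bullet[0]$ cofibrant, so the functor $A \ssetsmash - \co L_E\sset \to \C$ is left Quillen by hypothesis. The canonical zig-zag of weak equivalences $A \ssetsmash \Delta[-] \xrightarrow{\sim} A^\bullet$ in $\C^\Delta$ (via the lifting argument following Theorem \ref{existence}) evaluates levelwise to weak equivalences $A \ssetsmash K \xrightarrow{\sim} A^\bullet \tensor K$. Since $A^\bullet \tensor -$ already preserves cofibrations (it is a cosimplicial frame in the $\sset$-sense, and cofibrations of $\sset$ and $L_E \sset$ coincide), 2-out-of-3 shows it also sends trivial cofibrations in $L_E\sset$ to weak equivalences. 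Hence $A^\bullet$ is $E$-local, and the dual argument handles simplicial frames, giving $E$-familiarity.

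For the forward direction, suppose $\C$ is $E$-familiar. The $\sset$-module data (adjunction of two variables, unit and associativity isomorphisms) transfers verbatim to an $L_E\sset$-module structure; the unit axiom is trivial since $S^0$ is already cofibrant in $L_E\sset$. Cofibrations in $\sset$ and $L_E\sset$ coincide, so the only new case in the pushout-product axiom is: for $f \co A \to B$ a cofibration in $\C$ and $g \co K \to L$ a trivial cofibration in $L_E\sset$, show that $f \Box g$ is a trivial cofibration. It is a cofibration by the $\sset$-Quillen bifunctor property, so only the weak equivalence property needs verification.

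When $A$ and $B$ are cofibrant, $E$-familiarity applied to the standard frames gives that $A \ssetsmash -$ and $B \ssetsmash -$ are left Quillen from $L_E\sset$ to $\C$, so $A \ssetsmash g$ and $B \ssetsmash g$ are trivial cofibrations in $\C$. Writing $P := (B \ssetsmash K) \cup_{A \ssetsmash K} (A \ssetsmash L)$ for the pushout, the map $B \ssetsmash K \to P$ is the pushout of the trivial cofibration $A \ssetsmash g$, hence a trivial cofibration; its composite with $f \Box g \co P \to B \ssetsmash L$ is the trivial cofibration $B \ssetsmash g$, so by 2-out-of-3, $f \Box g$ is a weak equivalence. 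The main technical point I expect is extending this to cofibrations between non-cofibrant objects; the plan is to replace $f$ by a Reedy-cofibrant representative $\tilde f \co \tilde A \to \tilde B$ (between cofibrants) in the arrow category $\C^\to$ and reduce to the cofibrant case using naturality of the pushout-product, a step that is routine if $\C$ is cofibrantly generated but requires care in general.
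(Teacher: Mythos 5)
Your argument follows essentially the same route as the paper's: both reduce, via Example \ref{simplicialexample}, to the fact that in a simplicial $\C$ every frame on $A$ is weakly equivalent to the standard one $A \ssetsmash \Delta[-]$, so that $E$--familiarity becomes a statement about the bifunctor $-\ssetsmash-$ of the simplicial structure (and you correctly read ``$L_E\sset$--module category'' as ``$L_E\sset$--model category''; the literal reading would make the lemma vacuous, since $\sset$ and $L_E\sset$ coincide as categories). The paper's own proof is far terser: it notes that in the simplicial case $\map_l$ and $\map_r$ agree on the nose, so the framing bifunctors form an honest adjunction of two variables, and then simply invokes Corollary \ref{Ederive}. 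Your backward direction is correct and supplies exactly what that citation leaves implicit. Your forward direction is correct for cofibrations between cofibrant objects, and the case you flag as delicate --- $f \Box g$ for $f$ a cofibration with non-cofibrant domain and $g$ an $E$--trivial cofibration --- is a genuine subtlety; note, though, that the paper does not resolve it either: its pushout-product lemma in Section \ref{sec:Elocalframings} carries the same hypothesis that the domain of $f$ be cofibrant, and Corollary \ref{Ederive} asserts only the existence of derived functors.

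One warning about your proposed repair of that last step: replacing $f$ by a Reedy-cofibrant arrow $\tilde{f}$ and ``reducing by naturality'' will not go through as stated, because comparing the pushouts $(B \ssetsmash K) \cup_{A \ssetsmash K} (A \ssetsmash L)$ and $(\tilde{B} \ssetsmash K) \cup_{\tilde{A} \ssetsmash K} (\tilde{A} \ssetsmash L)$ requires the cube lemma (which needs cofibrant corners) or left properness, neither of which you have. The cleaner route is the adjoint form of the axiom: $f \Box g$ is a trivial cofibration for \emph{every} cofibration $f$ if and only if $\Hom_\Box(g,p) \co Y^L \to Y^K \times_{Z^K} Z^L$ is a trivial fibration for every fibration $p \co Y \to Z$ in $\C$. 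That map is a fibration by SM7 for the unlocalised structure, and its triviality can be tested by applying $\map(A,-)$ for cofibrant $A$, where it becomes $\Hom_\Box(g, \map(A,p))$; one then uses that the spaces $\map(A,Y)$ are $E$--local (this is $E$--familiarity, via Theorem \ref{Efamiliarcharacterisation}) and that Kan fibrations between $E$--fibrant simplicial sets are $E$--fibrations, reducing to fibrant objects exactly as in the proof of Theorem \ref{Efamiliarcharacterisation} and \cite[Corollary A.2]{Dug01}. With that substitution your proof is complete and, in the forward direction, strictly more detailed than the paper's.
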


\begin{proof}
We saw at the end of Section \ref{sec:framings} that in the case of $\C$ being simplicial the bifunctors $- \tensor -, (-)^{(-)}, \map_l(-,-), \map_r(-,-)$ defined via framings
agree with the tensor, cotensor and mapping space functors of the simplicial structure. Most importantly, in the simplicial case the left mapping space functor $$\map_l(A,B)= \C(A^*,B)$$ and right mapping space functor $$\map_r(A,B)=\C(A,B_*)$$ agree. Hence Corollary \ref{Ederive} provides a $L_E \sset$--model category structure if and only if $\C$ is $E$--familiar.
\end{proof}

\begin{corollary}
The model category of $E$--local simplicial sets ($L_E \sset$) and 
the model category of $E$--local spectra ($L_E \mathcal{S}$) are $E$--familiar.
\end{corollary}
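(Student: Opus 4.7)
The strategy is to apply the preceding lemma: since both $L_E\sset$ and $L_E\mathcal{S}$ are already simplicial model categories in the standard way (the former via its own pointed monoidal product; the latter via the standard simplicial enrichment of sequential spectra, whose (co)fibrations agree with those of $\mathcal{S}$ and thus are compatible with the localisation), it is enough to check that each is an $L_E\sset$--module category. In both cases, since the underlying categories and the unit $S^0$ are unchanged by localisation, the associativity and unit isomorphisms are inherited from the non-local simplicial structure. The only content is to verify that the tensoring is a Quillen bifunctor with respect to the $E$--local model structures.

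For $L_E\sset$ acting on itself, cofibrations agree between $\sset$ and $L_E\sset$, so the pushout-product of two cofibrations is taken care of by the monoidal structure of $\sset$. The new case is a (genuine) trivial cofibration of $L_E\sset$ pushout-producted with a cofibration. I would reduce this, as in \cite[Proposition 5.4.1]{Hov99}, to showing that for any $E$--equivalence $f \co X \to Y$ and any cofibrant simplicial set $K$, the map $f \sframe K$ is again an $E$--equivalence. This follows directly from the identity
\[
E_*(Z \sframe K) = \pi_*(E \spectrasmash Z \sframe K),
\]
together with the fact that smashing a stable equivalence of spectra with a fixed pointed simplicial set yields a stable equivalence.

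For $L_E\mathcal{S}$ as an $L_E\sset$--module, the action is the usual levelwise tensoring $X \sframe K$ of a spectrum with a pointed simplicial set, which is a Quillen bifunctor for the non-local structures. Again the only new case is a cofibrant spectrum $X$ smashed with an $E$--equivalence $g \co K \to K'$ in $L_E\sset$. I would show that $X \sframe g$ is an $E$--equivalence of spectra by the same argument: $E_*(X \sframe K) = \pi_*(E \spectrasmash X \spectrasmash K)$, and since $E \spectrasmash K \to E \spectrasmash K'$ is a stable equivalence, so is its smash with the cofibrant spectrum $X$.

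The main obstacle — really a minor one — is cleanly recording that smashing with cofibrant objects preserves $E$--equivalences of simplicial sets and of spectra, which is the essential mechanism by which Bousfield localisation of simplicial sets and spectra remains compatible with the monoidal and module-categorical structures. Once this is in hand, the remaining verifications are formal and the corollary follows immediately from the preceding lemma.
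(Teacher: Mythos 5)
Your proposal is correct and takes essentially the same route as the paper, which proves this corollary by simply invoking the preceding lemma and leaving implicit the (standard) fact that $L_E \sset$ and $L_E \mathcal{S}$ are $L_E \sset$--model categories. Your verification of the pushout-product axiom via the observation that smashing with a cofibrant object preserves $E$--equivalences is precisely the mechanism the paper is relying on.
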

\qed

The following shows that the notion of an $E$--familiar model category indeed generalises the concept of $L_E \sset$--model categories.

\begin{proposition}\label{Esimplicial}
If $\C$ is $E$--familiar and simplicial, then the $\Ho(L_E \sset)$--module structure from Theorem \ref{Emodule} agrees with the $\Ho(L_E \sset)$--module structure derived from the $L_E \sset$--model category structure.
\end{proposition}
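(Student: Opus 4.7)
The plan is to reduce the statement to Hovey's non-local version \cite[Theorem 5.6.2]{Hov99}, exploiting two easy observations: the underlying category, cofibrations, and trivial fibrations of $\sset$ and $L_E \sset$ coincide, and on any simplicial model category the framing-derived bifunctors agree with the simplicial structure bifunctors up to natural weak equivalence on cofibrant inputs.

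The first step is to identify the bifunctor $- \tensor^L_E - \co \Ho(\C) \times \Ho(L_E \sset) \to \Ho(\C)$ from Theorem \ref{Emodule} with the derived simplicial tensor. By Example \ref{simplicialexample}, for any $A \in \C$ the canonical cosimplicial object $A \sframe \Delta[-]$ is a cosimplicial frame, and for any simplicial set $K$ one has $(A \sframe \Delta[-]) \tensor K \cong A \sframe K$. Since any two cosimplicial frames for the same object are naturally weakly equivalent in $\C^\Delta$ (as discussed after Theorem \ref{existence}), the underlying framing bifunctor $- \tensor - \co \C \times \sset \to \C$ is naturally weakly equivalent on cofibrant inputs to the simplicial tensor $- \sframe -$. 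Because cofibrant replacement in $L_E \sset$ may be chosen identical to that in $\sset$, the derived functor $- \tensor^L_E -$ and the derived simplicial tensor on $\Ho(\C) \times \Ho(L_E \sset)$ are computed from the same data and so are naturally isomorphic. Dually, the cotensor and mapping space bifunctors agree.

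The second step is to verify that the associativity and unit isomorphisms of the two module structures coincide. The associativity isomorphism $\tau$ constructed in the proof of Theorem \ref{Emodule} is given by the same formula as the non-local one in \cite[Theorem 5.5.3]{Hov99}, using only cofibrant replacements in $\sset$ and in $\C$, none of which are altered in the $E$-local setting. Hovey's \cite[Theorem 5.6.2]{Hov99} identifies that non-local $\tau$ with the associator coming from the simplicial structure, and the simplicial associator is by definition the associator of the $L_E \sset$-model category structure on $\C$. Hence the $E$-local framing associator and the $L_E \sset$-model associator agree in $\Ho(\C)$. The unit isomorphism is handled in the same fashion: the unit $S^0$ of both monoidal structures is cofibrant, so the unit map is determined by its value at the generator, and agreement is immediate.

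The only technical obstacle is the bookkeeping of cofibrant replacements and zig-zags of weak equivalences between frames; all substantive content is absorbed into \cite[Theorem 5.6.2]{Hov99} and the identification of cofibrant replacement functors between $\sset$ and $L_E \sset$, so the reduction is essentially formal.
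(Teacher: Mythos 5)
Your argument is correct, but it takes a different route from the paper's. The paper does not invoke Hovey's comparison theorem \cite[Theorem 5.6.2]{Hov99} at all in this proof: instead it directly exhibits the identity of $\Ho(\C)$ as a $\Ho(L_E\sset)$--module functor, building the comparison isomorphism $\sigma \co A \tensor^L_E K \to A^* \tensor^L_E K$ from the frame $A \sframe \Delta[-]$ of Example \ref{simplicialexample} and the zig-zag comparison of frames, and then verifying the two coherence diagrams by the degree-zero trick of Lemma \ref{levelzero} (both composites in the associativity square are maps of cosimplicial frames agreeing in degree $0$, hence agree in $\Ho(\C)$). Your first step coincides with the paper's construction of $\sigma$; where you diverge is in outsourcing the coherence checks to the non-local statement \cite[Theorem 5.6.2]{Hov99} together with the observation that the $E$--local derived bifunctors and associators are computed from literally the same cofibrant replacements and the same morphism $\tau$ as their non-local counterparts. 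This reduction is sound and arguably shorter; the one point you should make explicit is that the comparison isomorphism of \cite[Theorem 5.6.2]{Hov99} is a priori natural only with respect to morphisms of $\Ho(\sset)$, and you need it to be natural on $\Ho(L_E\sset)$, which has additional morphisms (formal inverses of $E$--equivalences). This follows because the components are isomorphisms, so the naturality squares for inverted morphisms are obtained by inverting the original ones --- exactly the argument the paper spells out in the proof of Lemma \ref{lem:localmodfunctors}. What the paper's approach buys is self-containedness and uniformity: the same Lemma \ref{levelzero} technique is reused throughout (Theorem \ref{Emodule}, the stable analogues in Section 7), whereas your approach buys economy by reusing Hovey's theorem wholesale.
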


\begin{proof}
We are going to show that the identity
\[
\id: \Ho(\C) \longrightarrow \Ho(\C)
\]
is a $\Ho(L_E \sset)$--module functor. Here, the domain $\Ho(\C)$ has the $\Ho(L_E \sset)$--action given by the derived $L_E\sset$--model category structure. We give the target $\Ho(\C)$ the $\Ho(L_E \sset)$--module structure coming from framings. To show that the identity is a $\Ho(L_E \sset)$--module functor we need a natural isomorphism
\[
A \tensor^L_E K \longrightarrow A^* \tensor^L_E K
\]
satisfying two coherence diagrams \cite[Definition 4.1.7]{Hov99}. (Again, the first $\tensor^L_E$ is part of the $L_E \sset$--model category structure while the second one is coming from framings.)

Now let $A \in \C$.
We remember from Example \ref{simplicialexample} that $$A \tensor \Delta[-]$$ is an $E$--local framing on $A$, and that 
\[
(A \tensor \Delta[-]) \tensor^L_E K \cong A \tensor^L_E K.
\]
Hence by Section \ref{sec:framings} and \cite[Lemma 5.5.1]{Hov99}, there is an isomorphism in $\Ho(\C)$
\[
\sigma: A \tensor^L_E K \cong (A \tensor \Delta[-]) \tensor^L_E K \longrightarrow A^* \tensor^L_E K
\]
which is natural both in $A$ and $K$.

The first of the two coherence diagrams contains the two actions of the unit and is obvious since $A^* \tensor \Delta[0] \cong A$. Consider the second diagram:
\[
\xymatrix{ ((A \tensor \Delta[-]) \tensor^L_E K) \tensor^L_E L \ar[r] \ar[d] & (A^* \tensor^L_E K ) \tensor^L_E L \ar[dd] \\
(A \tensor \Delta[-]) \tensor^L_E (K \tensor^L_E L) \ar[d] & \\
A^* \tensor^L_E (K \tensor^L_E L) \ar[r] & (A^* \tensor^L_E K)^* \tensor^L_E L
}
\]
The upper left corner agrees with the framing
\[
\big( (A \tensor \Delta[-]) \tensor^L_E K \big) \tensor \Delta[-] \in \C^\Delta
\]
evaluated on $L$, so both clockwise and counterclockwise composition are maps of cosimplicial frames that obviously agree in degree 0.
So by Lemma \ref{levelzero}, the above diagram commutes in $\Ho(\C)$, which is what we wanted to prove. 
\end{proof}

We now provide an important characterisation of $E$--familiarity.

\begin{theorem}\label{Efamiliarcharacterisation} The following are equivalent.
\begin{enumerate}
\item The model category $\C$ is $E$--familiar.
\item The $\Ho(\sset)$--module structure on $\Ho(\C)$ factors over a $\Ho(L_E \sset)$--module structure.
\item The mapping spaces $R\map(-,-)$ are $E$--local.
\end{enumerate}
\end{theorem}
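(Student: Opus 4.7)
The strategy is a three-way cycle $(1)\Rightarrow(2)\Rightarrow(3)\Rightarrow(1)$. The implication $(1)\Rightarrow(2)$ is nothing more than the content of Theorem \ref{Emodule}.

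For $(2)\Rightarrow(3)$, suppose the $\Ho(\sset)$-action on $\Ho(\C)$ factors over a $\Ho(L_E\sset)$-action. The adjunction of two variables for the latter supplies a mapping space functor $R\map_E(-,-) : \Ho(\C)^{op} \times \Ho(\C) \to \Ho(L_E\sset)$. Since $A \tensor^L K$ agrees with $A \tensor^L_E L(K)$, where $L : \Ho(\sset) \to \Ho(L_E\sset)$ denotes the localisation, and since $\Ho(L_E\sset)(L(K), Y) \cong \Ho(\sset)(K, Y)$ whenever $Y$ is $E$-local, the two adjunctions give a natural isomorphism
\[
\Ho(\sset)(K, R\map(A,B)) \cong \Ho(\sset)(K, R\map_E(A,B))
\]
for all $K$. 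Yoneda then identifies $R\map(A,B)$ with $R\map_E(A,B)$, which is $E$-local by construction.

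For $(3)\Rightarrow(1)$ I would appeal to the standard descent criterion for left Bousfield localisations of model categories: a Quillen adjunction $F : M \rightleftarrows N : G$ descends to a Quillen adjunction $F : L_S M \rightleftarrows N : G$ if and only if $G$ sends every fibrant object of $N$ to an $S$-local object of $M$ (see \cite{Hir03}). Applied to the cosimplicial frame adjunction $A^\bullet \tensor - : \sset \rightleftarrows \C : \C(A^\bullet, -)$, this reduces the problem to showing that $\C(A^\bullet, B)$ is $E$-local for every fibrant $B \in \C$. Because $A^\bullet$ is Reedy cofibrant, $A := A^\bullet[0]$ is cofibrant in $\C$, and $\C(A^\bullet, B) = \map_l(A,B)$ is a model for the derived mapping space $R\map(A,B)$, which is $E$-local by hypothesis.

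The simplicial frame case is dual: rewriting the defining adjunction as $\sset \rightleftarrows \C^{op}$ with the opposite model structure on $\C^{op}$, the same descent criterion reduces $E$-locality of a simplicial frame $A_\bullet$ to $E$-locality of $\C(X, A_\bullet) \simeq R\map(X, A_\bullet[0])$ for cofibrant $X$, which once more follows from (3). The only genuine delicacy is the opposite-model-structure bookkeeping needed to apply the descent criterion on the correct side in the simplicial frame half; beyond that, the argument is pure Yoneda together with one citation.
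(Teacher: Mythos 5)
Your proposal is correct, but it is organised differently from the paper and two of its three arrows are genuinely different arguments, so let me compare. The paper proves the two biconditionals $(1)\Leftrightarrow(2)$ and $(1)\Leftrightarrow(3)$ separately: $(2)\Rightarrow(1)$ by unwinding the definition (the factored action forces each $A^\bullet\tensor-$ to invert $E$--equivalences), $(1)\Rightarrow(3)$ by noting that $\C(X^\bullet,-)\co\C\to L_E\sset$ is right Quillen and so lands in $E$--fibrant, hence $E$--local, simplicial sets, and $(3)\Rightarrow(1)$ by hand via Dugger's Corollary A.2 together with the fact that $\sset$--fibrations between $E$--fibrant objects are $E$--fibrations. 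You instead close the cycle $(1)\Rightarrow(2)\Rightarrow(3)\Rightarrow(1)$, which is perfectly legitimate. Your $(2)\Rightarrow(3)$ is a clean homotopy-category-level Yoneda argument not appearing in the paper; its one soft spot is that it presupposes the factored module structure is \emph{closed}, i.e.\ that $R\map_E$ exists, which is implicit in Definition \ref{Dmodule} but not literally asserted in statement (2). You can avoid this entirely: under (2) the functor $\Ho(\sset)(-,R\map(A,B))\cong\Ho(\C)(A\tensor^L-,B)$ sends $E$--equivalences to bijections, and that is verbatim the definition of $R\map(A,B)$ being $E$--local. Your $(3)\Rightarrow(1)$ cites the standard descent criterion for left Bousfield localisations; be aware that this citation is carrying all the weight that the paper spends on this implication --- the ``if'' direction of that criterion is proved exactly by the Dugger/Roitzheim argument (reduce to fibrations between fibrant objects, then use that fibrations between $E$--fibrant simplicial sets are $E$--fibrations) --- and that one should check the criterion applies to $L_E\sset$, where the localisation is at a proper class of maps (it does, by Bousfield's theorem). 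Your reduction of the $E$--locality of $\C(A^\bullet,B)$ to hypothesis (3) via $\C(A^\bullet,B)=\map_l(A^\bullet[0],B)\simeq R\map(A^\bullet[0],B)$, and the dual bookkeeping for simplicial frames, match what the paper does implicitly. What your route buys is a conceptual $(2)\Rightarrow(3)$ and a shorter $(3)\Rightarrow(1)$; what the paper's route buys is self-containedness, since it only invokes Dugger's elementary criterion rather than the full localisation machinery.
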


\begin{proof}

We first show the equivalence of (1) and (2). 
One direction is precisely Theorem \ref{Emodule}. As for the converse, remember that $\C$ is $E$--familiar by definition if every framing is also an $E$--local framing. This means that for every cosimplicial frame $A^\bullet$, the functor $A^\bullet \tensor - $ sends $E_*$--isomorphisms in simplicial sets to weak equivalences in $\C$. But this is exactly the case if we ask for the $\Ho(\sset)$--module structure to factor over $\Ho(L_E \sset)$.

\bigskip
Now we turn to the equivalence of (2) and (3).
One direction is straightforward: if $\C$ is $E$--familiar, then 
\[
\C(X^\bullet,-): \C \longrightarrow L_E \sset
\]
is a right Quillen functor for a cosimplicial frame $X^\bullet$. Hence it sends fibrant objects to fibrant objects. Since $E$--fibrant simplicial sets are automatically local, $\C(X^\bullet,Y)$ and hence $R\map(X,Y)$ are $E$--local.

Now let us look at the converse. We have to show that $\C(X^\bullet,-)$ sends fibrations in $\C$ to $E$--fibrations of simplicial sets. By \cite[Corollary A.2]{Dug01} it satisfies to show that $\C(X^\bullet,-)$ sends fibrations between fibrant objects to $E$--fibrations. Since 
\[
\C(X^\bullet,-): \C \longrightarrow \sset
\]
is a right Quillen functor, it sends fibrant objects in $\C$ to fibrant objects in $\sset$. By assumption, $\C(X^\bullet,Y)$ is also $E$--local for fibrant $Y$. Hence by Corollary \ref{cor:localmappingspace}, $\C(X^\bullet, Y)$ is $E$--fibrant. Since $\sset$--fibrations between $E$--fibrant objects are $E$--fibrations (see for example the proof of Proposition 3.2 in \cite{Roi07}), $$\C(X^\bullet,-): \C \longrightarrow L_E \sset$$ preserves fibrations between fibrant objects.
\end{proof}

We have to note that $E$--familiarity is certainly not an invariant of the homotopy category of a model category alone. For example, take the $K$--local stable homotopy category $\Ho(L_1 \mathcal{S})$ localised at an odd prime. (By $K$, we mean complex topological $K$--theory.) By \cite{Fra96} this possesses at least one ``exotic model''. This means that this homotopy category can be realised by at least one model category which is not Quillen equivalent to $K_{(p)}$--local spectra. It was noted in \cite{Roi07} that every framing on such an algebraic model will be trivial, whereas the framings on $L_1 \mathcal{S}$ are clearly nontrivial. Indeed, \cite{Roi07} shows that an exotic model can be detected entirely by the action of the generator 
\[
\alpha_1 \in \pi_{2p-3}^{st}(L_1 \mathbb{S}) \cong \mathbb{Z}/p
\]
via framings. We will investigate this in more detail in Section \ref{sec:applications}.

\section{Stable frames}\label{sec:Stableframes}

It is worthwhile to ask whether stable model categories provide framings with more interesting and useful structure. One natural task would be investigating the possibility of replacing simplicial sets, $\sset$,  by sequential spectra, $\mathcal{S}$, in all of the previous sections if $\C$ is stable. A first step towards this was undertaken by Schwede and Shipley \cite{SchShi02} where they show the ``Universal Property of Spectra''.

\begin{theorem}[Schwede--Shipley]\label{thm:ssadjunct}
Let $\C$ be a stable model category and $X$ a fibrant and cofibrant object of $\C$. Then there is a Quillen adjunction
\[
X \sstensor -: \mathcal{S} \lradjunction \C: \Map(X,-)
\]
such that $X \sstensor \mathbb{S} \cong X$. 
\end{theorem}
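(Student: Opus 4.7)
The plan is to promote the cosimplicial framing of $X$ to a Quillen functor out of sequential spectra, using stability of $\C$ to accommodate the levelwise structure maps of a spectrum. Since $X$ is fibrant and cofibrant, Theorem \ref{existence} supplies a cosimplicial frame $X^\bullet \in \C^\Delta$ with $X^\bullet[0]\cong X$, hence a Quillen adjunction
\[
X^\bullet \tensor - : \sset \lradjunction \C : \C(X^\bullet,-).
\]
The goal is to extend this adjunction across the stabilisation $\sset \to \mathcal{S}$, using that $\Sigma = - \tensor S^1$ is a self-equivalence of $\Ho(\C)$ because $\C$ is stable.

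First I would build a compatible tower of framings. Inductively, exploit stability to choose a cofibrant-fibrant object $X_{(n)} \in \C$ and a weak equivalence $X_{(n)} \tensor S^1 \xrightarrow{\sim} X_{(n-1)}$ (with $X_{(0)} = X$), and then apply the framing functor of Theorem \ref{existence} to get cosimplicial frames $X_{(n)}^\bullet$ together with structural comparison maps. For a sequential spectrum $Y=(Y_n,\sigma_n)$, define
\[
X \sframe Y \;:=\; \colim_n\bigl(X_{(n)}^\bullet \tensor Y_n\bigr),
\]
where the colimit is taken along the composite of $X_{(n)}^\bullet \tensor \sigma_n$ with the compatibility map $X_{(n)}^\bullet \tensor S^1 \to X_{(n-1)}^\bullet$ (suitably re-indexed). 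The right adjoint is the spectrum whose $n$-th level space is the mapping space $\C(X_{(n)}^\bullet, -)$, with adjoint structure maps coming from the chosen compatibility weak equivalences. Adjointness reduces level-by-level to the already established unstable adjunction.

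Next I would verify the Quillen condition. The generating (trivial) cofibrations of $\mathcal{S}$ are built from those of $\sset$ applied freely in each level, so it suffices that each functor $X_{(n)}^\bullet \tensor - : \sset \to \C$ is left Quillen, which holds by construction, together with the observation that the colimit defining $X \sframe -$ commutes with the relevant pushouts and preserves the levelwise (trivial) cofibration structure of the stable model structure on $\mathcal{S}$. The identification $X \sframe \mathbb{S} \cong X$ then falls out because $\mathbb{S}$ is the free spectrum on $S^0$ in level zero, so the colimit collapses to $X_{(0)}^\bullet \tensor S^0 = X$.

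The main obstacle is the point-set construction of the tower $X_{(n)}$ with coherent compatibility data: stability only gives desuspensions and their comparison maps up to weak equivalence in $\Ho(\C)$, and rigidifying these choices into a functorial diagram in $\C$ is what makes the construction delicate. This is precisely the content bundled into Lenhardt's stable frames formalism that will be used in Section \ref{sec:Stableframes}, and it is the step whose detailed execution would dominate any full proof.
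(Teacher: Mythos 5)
The paper does not prove this theorem itself: it is quoted from Schwede--Shipley, and Section \ref{sec:Stableframes} recalls Lenhardt's stable-frames machinery, which subsumes it. Your overall strategy --- a tower of framed desuspensions of $X$ assembled into an adjunction with $\mathcal{S}$ --- is exactly that construction, and you rightly identify the rigidification of the tower as the dominant difficulty. However, two steps in your verification are genuinely broken. First, the point-set formula $X \sframe Y = \colim_n \bigl(X_{(n)}^\bullet \tensor Y_n\bigr)$ does not define a functor: your comparison data is a weak equivalence $X_{(n)}^\bullet \tensor S^1 \to X_{(n-1)}^\bullet$ pointing the ``wrong way'', so the transition maps $X_{(n)}^\bullet \tensor Y_n \to X_{(n+1)}^\bullet \tensor Y_{n+1}$ needed for a sequential colimit only exist after inverting those weak equivalences, which cannot be done at the point-set level. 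The correct left adjoint is a coend, i.e.\ the coequalizer of the two maps $\coprod_n X_{(n+1)}^\bullet \tensor (Y_n \sstensor S^1) \rightrightarrows \coprod_n X_{(n)}^\bullet \tensor Y_n$ induced by the structure maps of $Y$ and of the cospectrum respectively; this is precisely how Lenhardt's equivalence between $\C^\Delta(\Sigma)$ and adjunctions $\mathcal{S} \lradjunction \C$ is organised. (The identification $X \sframe \mathbb{S} \cong X$ then follows because $\mathbb{S} = F_0 S^0$ and precomposition with $F_0$ recovers the level-zero adjunction, not because ``the colimit collapses''.)

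Second, and more seriously, your check of the Quillen condition only sees the level model structure. The generating trivial cofibrations of the \emph{stable} model structure on $\mathcal{S}$ are not obtained by applying those of $\sset$ freely in each level: the stable structure is a localisation of the level structure, with additional trivial cofibrations forcing fibrant objects to be $\Omega$--spectra. Knowing that each $X_{(n)}^\bullet \tensor -$ is left Quillen therefore only yields a Quillen adjunction for the levelwise structure. To descend to the stable structure you must use the hypothesis that the structure maps $X_{(n)}^\bullet \tensor S^1 \to X_{(n-1)}^\bullet$ are weak equivalences --- equivalently, that $\Map(X,Y)$ is an $\Omega$--spectrum for fibrant $Y$ --- and then invoke an argument in the style of \cite[Corollary A.2]{Dug01} for fibrations between fibrant objects. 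This is exactly the second condition in Lenhardt's characterisation of stable frames quoted in Section \ref{sec:Stableframes}, it is where the stability of $\C$ does real work beyond merely supplying desuspensions, and it is absent from your verification.
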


Fabian Lenhardt later generalised this to the context of stable framings in \cite{Len11}. He specifies the category of adjunctions $$ \mathcal{S} \lradjunction \C$$ and characterises those which give rise to Quillen adjunctions, giving a notion of stable (cosimplicial) frames. He then proceeds to show that each cofibrant--fibrant object in $\C$ possesses such a stable frame. Finally he describes how for stable $\C$, these constructions equip $\Ho(\C)$ with the structure of a closed $\Ho(\mathcal{S})$--module category. In order to $E$--localise these results, let us give the most important definitions and results of \cite{Len11} first. 

For this, it is not always necessary to assume $\C$ to be stable, but we are going to do so for the rest of this section for convenience. 

\bigskip
We remember that the category of adjunctions $$\sset \lradjunction 
C$$ is equivalent to cosimplicial objects $\C^\Delta$. We are now going to describe the category that is equivalent to adjunctions $$ \mathcal{S} \lradjunction \C.$$ 
First of all, let $X \in \C^\Delta$ be a cosimplicial frame. We are going to define the suspension $\Sigma X$ of $X$ as the cosimplicial object corresponding to the adjunction $X \sframe ( - \ssetsmash S^1)$.

\begin{definition}
A \textbf{$\Sigma$--cospectrum} is a sequence of objects $X_n \in \C^\Delta$ together with structure maps
\[
\Sigma X_n \longrightarrow X_{n-1}.
\]
A morphism of $\Sigma$--cospectra consists of a sequence of morphisms in $\C^\Delta$ that are compatible with the structure maps. The resulting category is denoted $\C^\Delta(\Sigma)$.
\end{definition}

Furthermore, $\C^\Delta(\Sigma)$ can be equipped with a useful model structure, 
see \cite[theorem 3.11]{Len11}. The following result is Theorem 3.7 of 
that paper. 

\begin{theorem}[Lenhardt]
The category $\C^\Delta(\Sigma)$ is equivalent to the category of adjunctions $$\mathcal{S} \lradjunction \C.$$ 
The image of a cospectrum $X$ under this equivalence is denoted by $$(X \sframe  -, \Map(X,-)).$$ 
\end{theorem}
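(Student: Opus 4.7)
The plan is to mimic, one level up, the equivalence between $\C^\Delta$ and adjunctions $\sset \lradjunction \C$ recalled in Section~\ref{sec:framings}, replacing pointed simplicial sets with sequential spectra. The key observation is that a sequential spectrum is built by gluing pointed simplicial sets along suspension maps, so an adjunction out of $\mathcal{S}$ should be built by gluing adjunctions out of $\sset$ along compatible suspension data, which is precisely the content of a $\Sigma$--cospectrum.

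First I would construct the assignment $X \mapsto (X \sframe -, \Map(X,-))$. Given a $\Sigma$--cospectrum $X = (X_n, \sigma_n)$ with $\sigma_n \co \Sigma X_n \to X_{n-1}$, and a spectrum $E$ with structure maps $\tau_n \co E_n \sframe S^1 \to E_{n+1}$, define $X \sframe E$ as the coequaliser in $\C$
\[
\coprod_{n \ge 0} X_{n+1} \tensor (E_n \sframe S^1) \rightrightarrows \coprod_{n \ge 0} X_n \tensor E_n,
\]
where one arrow uses $X_{n+1} \tensor \tau_n$ to land in $X_{n+1} \tensor E_{n+1}$, and the other uses the identification $X_{n+1} \tensor (E_n \sframe S^1) = (\Sigma X_{n+1}) \tensor E_n$ followed by $\sigma_{n+1} \tensor E_n$ to land in $X_n \tensor E_n$. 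Dually, set $\Map(X,c)_n = \C(X_n, c)$, the simplicial mapping space of the cosimplicial frame $X_n$, with structure map $\C(X_n, c) \to \Omega \C(X_{n+1}, c) = \C(\Sigma X_{n+1}, c)$ adjoint to $\sigma_{n+1}$. The coequaliser description combined with the level-wise adjunctions $X_n \tensor - \dashv \C(X_n, -)$ then reduces the adjointness $X \sframe - \dashv \Map(X, -)$ to a direct limit/hom interchange.

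For the inverse, given an adjunction $F \co \mathcal{S} \lradjunction \C \co G$, let $F_n \co \sset \to \mathcal{S}$ denote the left adjoint to evaluation at level $n$, so that $(F_n K)_m = K \sframe S^{m-n}$ for $m \ge n$. The composite $F \circ F_n \co \sset \to \C$ is colimit-preserving and corresponds, under the equivalence of Section~\ref{sec:framings}, to a cosimplicial object $X_n$ with $X_n[k] = F(F_n \Delta[k])$. The canonical map of free spectra $F_n(K \sframe S^1) \to F_{n-1} K$, adjoint to the identity $K \sframe S^1 = (F_{n-1} K)_n$, then produces the structure maps $\Sigma X_n \to X_{n-1}$ after applying $F$.

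Finally, the two assignments are mutually inverse because every sequential spectrum $E$ is canonically the coequaliser
\[
\coprod_n F_{n+1}(E_n \sframe S^1) \rightrightarrows \coprod_n F_n E_n \to E,
\]
expressing $E$ as a colimit of its free building blocks; thus any colimit-preserving functor out of $\mathcal{S}$ is determined by its values on the $F_n K$, and the two constructions return each other up to canonical natural isomorphism. The main obstacle I anticipate is the careful bookkeeping needed to identify $X_{n+1} \tensor (K \sframe S^1)$ with $(\Sigma X_{n+1}) \tensor K$ consistently on both sides of the correspondence, and to match the direction of the cospectrum structure maps (going \emph{down} in index) with the direction of the spectrum structure maps (going \emph{up} in index). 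Once this identification is handled, the remaining adjointness and mutual-inverse checks are formal.
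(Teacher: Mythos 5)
Your construction follows the same route the paper (quoting Lenhardt) sketches: you recover the levels $X_n$ by precomposing with the free--spectrum adjunctions $F_n \co \sset \lradjunction \mathcal{S} \co \ev_n$, obtain the structure maps from the canonical transformation $F_n \circ \Sigma \to F_{n-1}$, and use the presentation of any sequential spectrum as a coequaliser of free spectra to see that the adjunction is determined by the $X_n$ and these maps (Lenhardt's Proposition~3.4). Your explicit coequaliser formula for $X \sframe -$ is a correct concrete realisation of this, so the proposal is sound and essentially identical in approach.
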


The key to this is the following idea. Precomposing an adjunction
\[
L: \mathcal{S} \lradjunction \C: R
\]
with the adjunctions
\[
F_n: \sset \lradjunction \mathcal{S}: \ev_n
\]
(see \cite[Definition 2.1]{Len11}) for $n \ge 0$ gives a sequence of adjunctions
\[
L_n: \sset \lradjunction \C :R_n.
\]
Each of these is characterised by a cosimplicial object $X_n \in \C^\Delta$. These give the ``level spaces'' of a cospectrum $X \in \C^\Delta(\Sigma)$. 

Further, there are natural transformations
\[
\tau_n: L_n \circ \Sigma \longrightarrow L_{n-1}
\]
and their adjoints
\[
\eta_n: R_{n-1} \longrightarrow \Omega \circ R_n,
\]
see \cite[Proposition 3.4]{Len11}. These give rise to morphisms of cosimplicial sets $$\Sigma X_n \longrightarrow X_{n-1},$$ which are the structure maps of the cospectrum $X$. 

Lenhardt's Proposition 3.4 says that an adjunction $(L,R)$ as above is uniquely determined by either the $L_n$ and $\tau_n$ or the $R_n$ and $\eta_n$, which proves his Theorem 3.7 as quoted above.

\medskip
He then proceeds by characterising those cospectra that give rise to Quillen adjunctions in \cite[Section 6]{Len11}. 

\begin{proposition}[Lenhardt]
The adjunction
\[
X \sframe -: \mathcal{S} \lradjunction \C: \Map(X,-)
\]
is a Quillen adjunction if and only if 
\begin{itemize}
\item each $X_n$ is a cosimplicial frame
\item the structure maps $\Sigma X_n \longrightarrow X_{n-1}$ are weak equivalences.
\end{itemize}
Such a cospectrum $X$ is called a \textbf{stable frame}.
\end{proposition}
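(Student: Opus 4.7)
The plan is to check directly that $X \sframe -$ satisfies the Quillen adjunction condition on the generating (trivial) cofibrations of the Bousfield--Friedlander model structure on $\mathcal{S}$, making essential use of the observation recalled just before the statement: precomposition with $F_n : \sset \lradjunction \mathcal{S}$ recovers the level-wise adjunction, so that $(X \sframe -) \circ F_n$ is the left adjoint $L_n$ associated to the cosimplicial object $X_n$. This lets one analyse $L = X \sframe -$ ``one level at a time'', with the cospectrum structure maps supplying the extra information that couples the levels.

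The generating cofibrations of $\mathcal{S}$ are the maps $F_n(\partial \Delta[k]_+ \hookrightarrow \Delta[k]_+)$ and one family of generating trivial cofibrations is $F_n(\Lambda^j[k]_+ \hookrightarrow \Delta[k]_+)$. Under $L$ these are sent to $L_n$ applied to the corresponding maps in $\sset$. Hence this portion of the generating data is preserved exactly when each $L_n$ is a left Quillen functor out of $\sset$, which by Proposition \ref{framingcharacterisation} is equivalent to each $X_n$ being a cosimplicial frame. In particular, both directions of this half of the equivalence follow by applying the hypothesised (or desired) Quillen property to the level-$n$ generators.

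The remaining generating trivial cofibrations of $\mathcal{S}$ are the stabilisation maps, that is, mapping cylinder replacements of the adjoints $\lambda_n : F_{n+1}(S^1 \sframe K) \to F_n(K)$ of the structure maps of spectra, which force fibrant objects to be $\Omega$--spectra. Applying $X \sframe -$ and unwinding the defining property of $\Sigma X_{n+1}$ as the cosimplicial object representing $X_{n+1} \sframe ( - \sframe S^1)$, these generators are sent to maps whose level-zero values realise the cospectrum structure map $\Sigma X_{n+1} \to X_n$. Granted that each $X_n$ is already a cosimplicial frame, the images are cofibrations automatically, so they are trivial cofibrations precisely when each $\Sigma X_n \to X_{n-1}$ is a weak equivalence in $\C^\Delta$.

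The main obstacle is the bookkeeping around the stabilisation generators and the verification that ``preserves generating (trivial) cofibrations'' really suffices for a Quillen adjunction; this uses that the BF model structure on $\mathcal{S}$ is cofibrantly generated. A cleaner alternative, which would avoid listing the explicit stabilisation generators, is to work instead with the right adjoint $\Map(X,-)$ via \cite[Corollary A.2]{Dug01}: it is enough to check that $\Map(X,-)$ preserves acyclic fibrations (which at each level is the cosimplicial frame condition) and fibrations between fibrant objects. Since fibrant objects of $\mathcal{S}$ are $\Omega$--spectra, the latter condition translates, via the $\tau_n \leftrightarrow \eta_n$ adjoint correspondence of \cite[Proposition 3.4]{Len11}, into the requirement that each structure map $\Sigma X_n \to X_{n-1}$ be a weak equivalence, recovering the second condition in the statement.
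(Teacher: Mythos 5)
The paper does not actually prove this proposition: it is quoted from \cite[Section 6]{Len11}, so there is no in-text argument to compare yours against. Judged on its own terms, your second, ``cleaner alternative'' is the right argument, and it is essentially the template the paper itself uses for the $E$--local analogue (Theorem \ref{localmappingspectrum}): $\Map(X,-)$ preserves trivial fibrations because these are the levelwise trivial fibrations and, by adjunction over the generators $F_n(i)$, this reduces to each $X_n \tensor -$ preserving cofibrations; then \cite[Corollary A.2]{Dug01} reduces the rest to showing that $\Map(X,-)$ carries fibrations between fibrant objects to level fibrations between $\Omega$--spectra, where the level condition uses that each $X_n$ is a frame, the $\Omega$--spectrum condition is adjoint (via the $\tau_n \leftrightarrow \eta_n$ correspondence) to the structure maps $\Sigma X_n \to X_{n-1}$ being weak equivalences of cosimplicial frames, and one quotes the standard fact that a level fibration between stably fibrant spectra is a stable fibration. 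The converse is clean: composing with the Quillen pairs $(F_n,\ev_n)$ shows each $X_n$ is a frame, and applying Ken Brown's lemma to the stable equivalences $F_{n+1}(S^1 \wedge \Delta[k]_+) \to F_n(\Delta[k]_+)$ between cofibrant objects yields the structure map condition.

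Your first route, however, has a genuine gap. For the stable Bousfield--Friedlander structure on $\mathcal{S}$ it is not automatic that the levelwise generating trivial cofibrations together with cylinder replacements of the maps $\lambda_n$ form a set of generating trivial cofibrations; in the standard treatments one must take pushout-products of the cylinder-factorised stabilisation maps with the generating cofibrations and then \emph{prove} that this enlarged set detects the stable fibrations (or else accept the non-explicit generators produced by the localisation machinery). Knowing that the model structure is cofibrantly generated by some set does not license checking the Quillen condition on the particular set you wrote down. Moreover, even with the correct generators, showing that $X \sframe -$ carries such a pushout-product to a trivial cofibration requires a pushout-product argument in $\C$ that you do not supply. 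Since you flag this yourself and your right-adjoint route avoids it entirely, the proposal stands, but its first half should be read as motivation rather than proof.
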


Furthermore, each object in $\C$ possesses a framing \cite[Theorem 6.3]{Len11}:

\begin{theorem}
Let $A \in \C$ be a fibrant and cofibrant. Then there is a stable frame $X$ with $X_{0,0} = X \sframe \mathbb{S} \cong A$. 
\end{theorem}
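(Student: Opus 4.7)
The approach is to construct the stable frame $X$ level by level as a cospectrum in $\C^\Delta(\Sigma)$, producing a sequence of cosimplicial frames $X_n$ together with structure maps $\Sigma X_n \longrightarrow X_{n-1}$ which are weak equivalences, and then to appeal to a final Reedy-style replacement if needed. The base case $n=0$ is immediate: apply Hovey's Theorem \ref{existence} to the cofibrant object $A$ to obtain a cosimplicial frame $X_0$ with $X_0[0] \cong A$, so that $X \sframe \mathbb{S} \cong X_0[0] \cong A$ as required.

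For the inductive step, suppose we have constructed $X_{n-1}$ with $A_{n-1} := X_{n-1}[0]$ fibrant and cofibrant. Since $\C$ is stable, the derived suspension $\Sigma : \Ho(\C) \to \Ho(\C)$ is an equivalence, so there exists a fibrant and cofibrant object $A_n \in \C$ together with an isomorphism $\Sigma A_n \cong A_{n-1}$ in $\Ho(\C)$. Applying Theorem \ref{existence} to $A_n$ yields a cosimplicial frame $X_n$ with $X_n[0] \cong A_n$. The candidate structure map is constructed from $\Sigma X_n$, defined as the cosimplicial object corresponding to the left Quillen adjunction $X_n \sframe (- \ssetsmash S^1)$. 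This $\Sigma X_n$ is itself a cosimplicial frame, being obtained by precomposition with the left Quillen auto-equivalence $-\ssetsmash S^1$ of $\sset$, and its level-zero object $X_n \sframe S^1$ represents $\Sigma A_n$ in $\Ho(\C)$, hence is weakly equivalent to $A_{n-1} = X_{n-1}[0]$.

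One then compares $\Sigma X_n$ and $X_{n-1}$ via the universal lifting argument recalled just after Theorem \ref{existence}: in $\C^\Delta$ both are (weakly equivalent to) factorisations of the canonical map $l^\bullet A_{n-1} \longrightarrow r^\bullet A_{n-1}$, provided we first realise the level-zero comparison $\Sigma A_n \simeq A_{n-1}$ by an honest zig-zag of weak equivalences in $\C$. Running this lifting argument at the cosimplicial level produces the desired weak equivalence $\Sigma X_n \longrightarrow X_{n-1}$ and completes the induction. Iterating gives a cospectrum in $\C^\Delta(\Sigma)$ whose levels are cosimplicial frames and whose structure maps are weak equivalences; this is precisely a stable frame in Lenhardt's sense, with $X_{0,0} \cong A$.

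The main obstacle is the comparison of cosimplicial frames over \emph{different} but weakly equivalent objects of $\C$. The uniqueness-of-framings argument recalled in Section \ref{sec:framings} directly compares two frames on the same object, so one must either interpose a cosimplicial frame on an intermediate object (obtained by choosing an honest representative zig-zag for $\Sigma A_n \simeq A_{n-1}$), or invoke the model structure on $\C^\Delta(\Sigma)$ from \cite{Len11} and perform a single functorial fibrant-cofibrant replacement of the cospectrum so constructed. Either route preserves the level-zero object up to a natural weak equivalence, so the resulting stable frame still satisfies $X \sframe \mathbb{S} \cong A$.
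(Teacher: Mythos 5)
Your proposal is essentially correct, but note that the paper states this theorem as a quotation of \cite[Theorem 6.3]{Len11} and gives no proof of its own; your induction by desuspension is in substance Lenhardt's argument. The step you flag as the ``main obstacle'' is less serious than you suggest: since $\Sigma X_n[0] = X_n \sframe S^1$ is cofibrant and $A_{n-1} \cong X_{n-1}[0]$ is fibrant, the isomorphism $\Sigma A_n \cong A_{n-1}$ in $\Ho(\C)$ is realised by an honest weak equivalence in $\C$ (no zig-zag or intermediate frame is needed), and one obtains the structure map by lifting $\Sigma X_n \to r^\bullet A_{n-1}$ through the Reedy trivial fibration $X_{n-1} \to r^\bullet A_{n-1}$; the lift is a levelwise weak equivalence by Proposition \ref{framingcharacterisation} because it is one in level zero.
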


In particular this implies Schwede's and Shipley's Universal Property of Spectra.

\bigskip
Unfortunately, stable frames cannot be chosen with such good functorial properties as their unstable analogues, 
as is noted by \cite[Remark 6.4]{Len11}. The problem is of a categorical nature and arises whenever $\C$ is not a simplicial model category.  While the suspension functor $\Sigma$ of $\Ho(\C)$ can be realised via the use of framings $S^1 \tensor -$, the adjoint of this functor is unlikely to be $\Omega$. This seems to seems to prevent one from being able make a functorial construction of stable frames. 

\bigskip
The central structural result is a stable version of Theorem \ref{homotopymodule}, it appears as 
\cite[Theorem 7.3]{Len11}. 

\begin{theorem}[Lenhardt]\label{stablemodule}
Let $\C$ be a stable model category. Via stable frames, $\Ho(\C)$ becomes a closed $\Ho(\mathcal{S})$--module category. 
\end{theorem}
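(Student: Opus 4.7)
The plan is to parallel Hovey's proof of the unstable analogue Theorem~\ref{homotopymodule} (\cite[Theorem 5.5.3]{Hov99}), with stable frames replacing cosimplicial frames and $\mathcal{S}$ replacing $\sset$. Three tasks need attention: constructing the derived bifunctors giving an adjunction of two variables on the homotopy category, constructing the unit and associativity isomorphisms, and verifying the pentagon and triangle coherence diagrams.

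For each cofibrant--fibrant $A \in \C$, Lenhardt's existence theorem provides a stable frame $X^A$ with $X^A \sframe \mathbb{S} \cong A$ and hence a Quillen adjunction $(X^A \sframe -, \Map(X^A, -)) \colon \mathcal{S} \lradjunction \C$. Deriving yields
\[
- \Dsmash - \colon \Ho(\C) \times \Ho(\mathcal{S}) \longrightarrow \Ho(\C)
\]
together with $R(-)^{(-)}$ and $R\Map(-,-)$. A stable analogue of \cite[Lemma 5.5.1]{Hov99} shows that any two stable frames on the same object are related by a natural zigzag of weak equivalences in $\C^\Delta(\Sigma)$, so the derived bifunctors are independent of the choice of frame. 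Level by level, the pushout--product axiom reduces to the pushout--product property for cosimplicial frames already established in the unstable case, combined with the cofibration structure on $\mathcal{S}$; this yields the adjunction of two variables on the derived level. The unit isomorphism $A \Dsmash \mathbb{S} \cong A$ is immediate from the normalisation built into the definition of a stable frame.

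The heart of the proof is the associativity isomorphism $A \Dsmash (K \Dsmash L) \xrightarrow{\sim} (A \Dsmash K) \Dsmash L$, where the outer action on the right requires choosing a stable frame on $A \Dsmash K$. Following Hovey's template, I would construct a natural weak equivalence at the point-set level
\[
a \colon X^A \sframe (K \spectrasmash L) \longrightarrow (X^A \sframe K) \sframe L
\]
for cofibrant inputs, where on the right one uses a chosen stable frame on $X^A \sframe K$. After precomposing with cofibrant replacements one obtains the total derived associator $\tau_{A,K,L}$ in $\Ho(\C)$. Naturality in $K$ and $L$ is straightforward from the construction; naturality in $A$ is where Lenhardt's lack of functoriality of stable frames (\cite[Remark 6.4]{Len11}) must be circumvented, using that different stable frames on the same object are linked by weak equivalences.

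The principal obstacle is the coherence conditions. As in Hovey's treatment, both compositions around the pentagon and triangle diagrams arise as maps of stable frames on a common object that agree when evaluated at the $X_{0,0}$ component. To pass from equality at $X_{0,0}$ to equality after deriving, one needs a stable analogue of \cite[Lemma 5.5.2]{Hov99}: maps of stable frames that agree on the bottom object induce equal derived natural transformations. This should follow from the Reedy--type model structure on $\C^\Delta(\Sigma)$ of \cite[Theorem 3.11]{Len11}, in which stable frames are cofibrant--fibrant replacements of the constant cospectrum on $X_{0,0}$; the requirement that the structure maps $\Sigma X_n \to X_{n-1}$ be weak equivalences then forces the higher levels to be determined up to weak equivalence by the bottom object. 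Establishing this levelwise uniqueness rigorously is the chief technical hurdle.
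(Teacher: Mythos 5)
The paper does not actually prove this statement itself --- it is quoted directly as \cite[Theorem 7.3]{Len11} --- but your outline matches the strategy that both Lenhardt and the paper's own proof of the $E$--local analogue (Theorem \ref{thm:EfamiliarEmodule}) follow: construct the associator from a point-set weak equivalence between the two stable frames $\omega X \sframe (\overline{K}\sframe -)$ and $\omega(\omega X \sframe K)\sframe -$ on the object $X \sframe K$, and reduce naturality in the first variable together with all coherence diagrams to agreement in bidegree $(0,0)$ via the uniqueness results for stable frames (\cite[Proposition 4.7, Theorem 6.10, Corollary 6.11]{Len11}). The ``chief technical hurdle'' you flag --- that maps of stable frames agreeing on the bottom object induce equal derived natural transformations --- is precisely the input supplied by Lenhardt's Corollary 6.11 (the analogue of which appears in the paper as Lemma \ref{Ederivedfunctor}), so your plan is sound and essentially the intended one.
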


Just as framings in $\sset$ provide a generalisation of a simplicial model category structure, a stable framing does the analogue for spectral model categories. By ``spectral model category'' we mean a $\mathcal{S}^\Sigma$--model category, where $\mathcal{S}^{\Sigma}$ denotes symmetric spectra.  Symmetric spectra are Quillen equivalent to sequential spectra via the Quillen equivalence
\[
V: \mathcal{S} \lradjunction \mathcal{S}^\Sigma: U
\]
where the right adjoint $U$ is forgetting the symmetric action, see \cite[Proposition 4.2.4]{HSS}. 
As with sequential spectra, there is a free spectrum and evaluation adjunction $(F_n^\Sigma,\ev_n)$ between simplicial sets and symmetric spectra, see \cite[Definition 2.1.7]{HSS}. It factors over the non-symmetric case as
\[
\xymatrix@C+0.3cm@R+0.3cm{ \sset \ar@<0.7ex>[rr]^{F_n^\Sigma} \ar@<0.7ex>[rd]^{F_n} & & \mathcal{S}^\Sigma \ar[dl]^{U} \ar[ll]^{\ev_n} \\
& \ar@<0.7ex>[ur]^{V} \ar[ul]^{\ev_n} \mathcal{S}.& 
}
\]
With this we can write down what framings in spectral model categories look like and observe that framings are indeed a generalisation of the spectral structure.

\begin{ex}\label{stableexample}
If $\C$ is a spectral model category and $X \in \C$ is fibrant and cofibrant, then we have a Quillen adjunction
\[
X \sframe -: \mathcal{S}^\Sigma \lradjunction \C: \Map(X,-)
\]
which is part of the spectral structure. Precomposing with the adjunction $(V,U)$ as described above gives an adjunction
\[
\mathcal{S} \lradjunction \mathcal{S}^\Sigma \lradjunction \mathcal{C}
\]
which we are also going to denote by $(X \sframe -, \Map(X,-)).$ We can now easily describe the corresponding cospectrum $\overline{X}$. Its $n^{th}$ level, $X_n \in \C^\Delta$, is the cosimplicial set corresponding to the adjunction 
$$X \sframe F_n(-): \sset \lradjunction \C :
\ev_n \circ \Map(X,-)
$$ 

and the structure maps $$\Sigma X_n \longrightarrow X_{n-1}$$ are obtained via applying the functor $X \sframe -$ to the natural transformation
$$F_n \circ \Sigma \longrightarrow F_{n-1}.$$ This natural transformation induces the trivial map in level $n-1$ and below,  and the identity in level $n$ and above. When  evaluated on a simplicial set $K$, it gives a weak equivalence of sequential spectra. Hence the structure maps $$\Sigma X_n =X \sframe F_n \circ \Sigma \longrightarrow X \sframe F_{n-1} = X_{n-1}$$ are weak equivalences of cosimplicial objects in $\C$, as required. 

Thus the cospectrum $\overline{X}$ defines a stable frame with $X_{0,0} = X$. By uniqueness of stable frames \cite[Proposition 4.7]{Len11}, every stable frame $Y$ on an object $X \in \C$ will agree, up to homotopy, with the Quillen pair $(X \sframe -, \Map(X,-))$ given by the spectral structure. 
\end{ex}

We can put this example in a context with even higher structure, the following result appears as 
\cite[Theorem 7.4]{Len11}.  

\begin{theorem}\label{stablemodulestructure}
Let $\C$ be a spectral model category. Then the $\Ho(\mathcal{S})$ module structure derived from the spectral structure agrees with the $\Ho(\mathcal{S})$--module structure coming from framings as in Theorem \ref{stablemodule}.
\end{theorem}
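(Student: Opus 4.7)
The plan is to reduce the comparison of the two $\Ho(\mathcal{S})$--module structures to a uniqueness statement for stable frames, exploiting the fact that the spectral model category structure on $\C$ already provides, for each cofibrant and fibrant object $X$, a canonical candidate for a stable frame.

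First I would unpack Example \ref{stableexample} carefully. The spectral structure gives a Quillen adjunction $X \sframe - : \mathcal{S}^\Sigma \lradjunction \C$, and precomposing with the Quillen equivalence $(V,U)$ produces an adjunction with $\mathcal{S}$. The associated cospectrum $\overline{X}$ has $\overline{X}_n$ equal to the cosimplicial object corresponding to $X \sframe F_n(-)$, and the structure maps are induced by the level maps $F_n \circ \Sigma \longrightarrow F_{n-1}$, which are weak equivalences of sequential spectra after evaluating on any simplicial set. Hence $\overline{X}$ is a stable frame on $X$ in the sense of Lenhardt. This step is essentially already done in the excerpt; I would just make the identifications explicit as a lemma saying: the $\Ho(\mathcal{S})$--action on $\Ho(\C)$ obtained via the spectral structure, followed by restriction along $V$, is computed at $X$ by $\overline{X}\sframe -$.

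Next I would invoke uniqueness of stable frames \cite[Proposition 4.7]{Len11}: any two stable frames on the same cofibrant-fibrant object $X$ are weakly equivalent in $\C^\Delta(\Sigma)$, and such a weak equivalence induces a natural weak equivalence of the derived functors $-\sframe^L -$ on $\Ho(\mathcal{S})$. Applied to $\overline{X}$ and to the stable frame chosen in the proof of Theorem \ref{stablemodule}, this shows that the two derived action bifunctors $\Ho(\mathcal{S}) \times \Ho(\C) \longrightarrow \Ho(\C)$ are naturally isomorphic as bifunctors. The mapping spectra and the cotensor then agree by adjunction.

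What remains, and what I expect to be the main obstacle, is to show that this natural isomorphism is compatible with the associativity and unit constraints of the two module structures, so that the identity on $\Ho(\C)$ underlies a genuine $\Ho(\mathcal{S})$--module functor rather than merely a pointwise comparison. Here I would follow the pattern used in the proof of Proposition \ref{Esimplicial} and in \cite[Theorem 5.5.3]{Hov99}: write down the two associativity squares and the unit triangle; observe that both come from the same underlying adjunction data after passage to homotopy categories, and that in degree $0$ (where everything reduces to the unit $\mathbb{S}$ and the identity on $X$) the diagrams commute strictly in $\C$. The stable analogue of Lemma \ref{levelzero}, which follows from uniqueness of stable frames, then propagates this level-zero agreement to a commuting diagram in $\Ho(\C)$. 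The coherence with the $\Ho(\sset)$--module structure via the suspension spectrum functor provides a useful sanity check at each step, since on $0$--th spaces the two actions restrict to the simplicial module action already treated by Hovey.
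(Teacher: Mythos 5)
Your proposal is correct and follows essentially the same route as the paper: the paper cites Lenhardt's Theorem 7.4 for this statement and notes it is the special case $E_*=\pi_*$ of Proposition \ref{stableagree}, whose proof likewise identifies the cospectrum $\overline{X}$ from Example \ref{stableexample} as a stable frame, invokes uniqueness of stable frames (Lenhardt's Proposition 4.7), and propagates the level-$(0,0)$ agreement to the coherence diagrams in $\Ho(\C)$ via Lemma \ref{Ederivedfunctor}. No substantive differences to report.
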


For this, we remember that although the category of sequential spectra $\mathcal{S}$ is not a monoidal model category, the stable homotopy category $\Ho(\mathcal{S})$ is monoidal. Further, $\mathcal{S}$ and symmetric spectra $\mathcal{S}^\Sigma$ are Quillen equivalent, hence $\Ho(\mathcal{S}^\Sigma)=\Ho(\mathcal{S})$. 
This result is also a special case (the one where $E_* = \pi_*$) of Proposition \ref{stableagree}.

\section{\texorpdfstring{$E$}{E}--familiarity and stable model categories}\label{sec:Efamiliarstable}

We are now interested in $E$--local versions of those results. The central application we have in mind is obtaining a ``Universal Property of $E$--local spectra''
analogous to Theorem \ref{thm:ssadjunct}.

\begin{definition}
We say that a $\Sigma$--cospectrum $X$ is an \textbf{$E$--local stable frame} if
\[
X \sframe -: L_E \mathcal{S} \lradjunction \C: \Map(X,-)
\]
is a Quillen adjunction.
We further say that the model category $\C$ is \textbf{stably $E$--familiar} if every stable frame is also an $E$--local stable frame. 
\end{definition}

Let us first make some immediate observations following this definition, 
\begin{lemma}
Any $L_E \mathcal{S}^\Sigma$--model category is stably $E$--familiar. 
\end{lemma}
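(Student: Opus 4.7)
The plan is to reduce to the canonical stable frame coming from the spectral structure (Example \ref{stableexample}) and then transfer the $L_E \Sp$-Quillen property to an arbitrary stable frame using Lenhardt's uniqueness. Let $X$ be a stable frame on an object $A := X_{0,0} \in \C$; the existence of $X$ forces $A$ to be cofibrant and fibrant. Example \ref{stableexample} produces a canonical stable frame $\overline{A}$ on $A$ obtained by composing the $\Sp^\Sigma$-spectral adjunction $A \spectrasmash - : \Sp^\Sigma \lradjunction \C : \Map(A,-)$ with the Quillen equivalence $V: \Sp \lradjunction \Sp^\Sigma : U$, and \cite[Proposition 4.7]{Len11} identifies $X$ with $\overline{A}$ up to homotopy of stable frames.

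The first genuine step is to verify that $V \dashv U$ descends to a Quillen equivalence $V: L_E \Sp \lradjunction L_E \Sp^\Sigma : U$. The $E$-localisations share cofibrations with their unlocalised counterparts, and, since $V \dashv U$ is already a Quillen equivalence, $V$ preserves $E_*$-isomorphisms between cofibrant objects; Dugger's criterion \cite[Corollary A.2]{Dug01} then supplies the descent. Since $\C$ is $L_E \Sp^\Sigma$-enriched, the spectral tensoring $A \spectrasmash - : L_E \Sp^\Sigma \to \C$ is left Quillen by definition, and composing with $V$ shows that $\overline{A} \sframe - : L_E \Sp \to \C$ is left Quillen, i.e.\ that $\overline{A}$ is an $E$-local stable frame.

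To finish I transfer this to $X$. Applying Dugger's criterion a second time, $X \sframe - : L_E \Sp \to \C$ is left Quillen iff it preserves cofibrations (automatic, since the cofibrations of $L_E \Sp$ coincide with those of $\Sp$ and $X$ is already a stable frame) and sends $E$-equivalences $f: K \to L$ between cofibrant spectra to weak equivalences in $\C$. For such an $f$ no cofibrant replacement is required, so $X \sframe f$ and $\overline{A} \sframe f$ represent the values of the respective total derived tensorings. The uniqueness of stable frames identifies these total derived functors naturally on $\Ho(\Sp)$, so $X \sframe f$ is a weak equivalence if and only if $\overline{A} \sframe f$ is one, which holds by the previous paragraph.

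The main obstacle I anticipate is this last transfer step: one must check that Lenhardt's ``up to homotopy'' uniqueness really yields a genuine natural isomorphism of total derived tensorings $\Ho(\Sp) \to \Ho(\C)$, strong enough to deduce the off-the-nose $L_E \Sp$-Quillen property of $X$ from that of $\overline{A}$. Once this is secured, the other ingredients (descent of $V \dashv U$ to the $E$-localisations, and the composition with the $L_E \Sp^\Sigma$-action) are straightforward applications of Dugger's criterion.
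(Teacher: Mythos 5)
Your argument is correct, but it takes a genuinely different route from the paper's. The paper disposes of this lemma in one line by combining Example \ref{stableexample} with Theorem \ref{localmappingspectrum}: in an $L_E \mathcal{S}^\Sigma$--model category the mapping spectra $\Map(X,Y)$ (for $X$ cofibrant and $Y$ fibrant) are fibrant in $L_E \mathcal{S}^\Sigma$ and hence $E$--local, Example \ref{stableexample} identifies these up to homotopy with the mapping spectra of arbitrary stable frames, and Theorem \ref{localmappingspectrum} then says exactly that $\C$ is stably $E$--familiar. In other words, the paper works on the right-adjoint side and lets the characterisation theorem absorb the passage from the canonical spectral frame to an arbitrary one. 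You instead work on the left-adjoint side: you descend $(V,U)$ to the $E$--local structures, observe that $A \spectrasmash V(-)$ is a composite of left Quillen functors, and then transfer left-Quillen-ness to an arbitrary stable frame on $A$ via Lenhardt's uniqueness. This makes the proof independent of Theorem \ref{localmappingspectrum} (which the paper only establishes later in the section), at the cost of having to address the transfer step explicitly. The worry you raise about that step does resolve: both $X \sframe -$ and $\overline{A} \sframe -$ are already left Quillen from the \emph{unlocalised} $\mathcal{S}$, so on cofibrant spectra they compute their total left derived functors, and the natural isomorphism of derived functors supplied by \cite[Theorem 6.10]{Len11} together with two-out-of-three is enough to conclude that $X \sframe f$ is a weak equivalence precisely when $\overline{A} \sframe f$ is. One small point of hygiene: \cite[Corollary A.2]{Dug01}, as used in this paper, is a statement about right Quillen functors, so where you invoke ``Dugger's criterion'' for left adjoints you should either dualise it or cite directly the fact that a left adjoint preserving cofibrations and sending trivial cofibrations between cofibrant objects to weak equivalences is left Quillen.
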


\begin{proof}
This follows from Example \ref{stableexample} in combination with Theorem \ref{localmappingspectrum}.
\end{proof}

\begin{lemma}\label{lem:evaluation}
Any stably $E$--familiar model category is also $E$--familiar.
\end{lemma}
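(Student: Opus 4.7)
The plan is to lift an arbitrary cosimplicial frame on $\C$ to a stable frame, and then pull the conclusion back along the free spectrum adjunction $F_0 \colon \sset \lradjunction \Sp$. The simplicial-frame half is dual.

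Let $A^\bullet \in \C^\Delta$ be a cosimplicial frame with $A = A^\bullet[0]$. Since the cofibrations of $\sset$ and $L_E \sset$ coincide and $A^\bullet$ is already an ordinary frame, it suffices to show that $A^\bullet \tensor -$ sends $E$--acyclic cofibrations of simplicial sets to weak equivalences in $\C$. First I would reduce to the case where $A$ is cofibrant--fibrant, via a factorisation of $A \to \ast$ into a trivial cofibration followed by a fibration, lifted to $\C^\Delta$ through the Reedy model structure; a $2$--out--of--$3$ argument on the comparison square shows that $E$--locality transfers between any two level-wise weakly equivalent cosimplicial frames. Then Lenhardt's existence theorem yields a stable frame $X$ with $X \sframe \mathbb{S} \cong A$, and by construction the level-zero cosimplicial object $X_0 = X \sframe F_0(-) \in \C^\Delta$ is itself a cosimplicial frame on $A$, hence weakly equivalent in $\C^\Delta$ to $A^\bullet$.

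The crucial step is that $F_0 K = \Sigma^\infty K$ satisfies $E_*(F_0 K) = \pi_*(E \spectrasmash \Sigma^\infty K) = E_*(K)$, so $F_0 \colon \sset \to \Sp$ carries $E$--equivalences to $E$--equivalences, and is therefore still left Quillen when viewed between $L_E \sset$ and $L_E \Sp$. Composing $F_0$ with the left Quillen functor $X \sframe - \colon L_E \Sp \to \C$ supplied by stable $E$--familiarity gives that $X_0 \tensor - \colon L_E \sset \to \C$ is left Quillen, so $X_0$ is an $E$--local cosimplicial frame, and the reduction above transfers this conclusion back to $A^\bullet$.

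The simplicial-frame case runs symmetrically via the right-adjoint side of a stable frame: the right Quillen functor $\Map(X,-) \colon \C \to L_E \Sp$ composed with the right Quillen functor $\ev_0 \colon L_E \Sp \to L_E \sset$ witnesses $E$--locality of the simplicial frame on $A$ produced by $X$, and uniqueness of frames up to weak equivalence extends this to any simplicial frame. The main obstacle I anticipate is the bookkeeping in the reduction step: one must match a given cosimplicial frame $A^\bullet$ with the specific $X_0$ arising from Lenhardt's construction on a fibrant replacement of $A$, which relies on the uniqueness of cosimplicial frames up to a zig-zag of level-wise weak equivalences in $\C^\Delta$, together with the verification that $E$--locality of the associated Quillen functor on $L_E \sset$ is invariant under such zig-zags.
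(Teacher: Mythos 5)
Your argument is correct, and it reaches the conclusion by a different route from the paper. The paper's proof is a single observation on the right-adjoint side: $\ev_0(\Map(Z,-)) = \map(Z_0,-)$, hence $R\map(X,Y) \cong R\ev_0(R\Map(X,Y))$; since $R\Map(X,Y)$ is $E$--local by stable $E$--familiarity and $R\ev_0$ sends $E$--local spectra to $E$--local simplicial sets, every derived mapping space is $E$--local, and Theorem \ref{Efamiliarcharacterisation} converts this into $E$--familiarity. Your key step --- that $E_*(F_0 K)=E_*(K)$, so $F_0 \co L_E\sset \to L_E\Sp$ is left Quillen --- is precisely the adjoint formulation of the paper's claim that $R\ev_0$ restricts to a functor $\Ho(L_E\Sp)\to\Ho(L_E\sset)$, so the essential content coincides; what differs is that you then verify the definition frame by frame (reduction to a bifibrant object, Lenhardt's existence theorem, transfer of $E$--locality along levelwise weak equivalences of frames via two-out-of-three), which costs the bookkeeping you anticipate but makes the lemma independent of the mapping-space characterisation. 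One imprecision to repair in your simplicial half: a stable frame $X$ is a cospectrum, so $\ev_0\circ\Map(X,-)=\C(X_0,-)$ witnesses $E$--locality of the cosimplicial frame $X_0$, equivalently of the left mapping spaces $\map_l$, and does not directly produce a simplicial frame on anything. To handle simplicial frames you must additionally invoke that $\map_l$ and $\map_r$ agree up to a zig-zag of weak equivalences (\cite[Proposition 5.4.7]{Hov99}) and that $E$--locality is invariant under weak equivalence of fibrant simplicial sets, after which the dual of the argument in Theorem \ref{Efamiliarcharacterisation} applies; this is a short fix, and the paper leaves the simplicial case equally implicit.
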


\begin{proof}
We must show that 
for pair of any objects $X$ and $Y$ in $\C$ the simplicial set 
$R \map(X,Y)$ is $E$--local. 
For $Z$ a cospectrum there is an equality of functors
\[
\ev_0(\Map(Z,-))=\map(Z,-)_0=\map(Z_0,-) : \C \longrightarrow \sset.
\]
Hence, on homotopy categories, there is an isomorphism of functors
\[
R \ev_0 \circ R \Map(-,-) \cong R \map(-,-) : \Ho(\C)^{op} \times \Ho(\C) \longrightarrow \Ho(\sset).
\]
Since $R \Map(-,-)$ takes values in $\Ho(L_E \Sp)$ and $R \ev_0$ can also be thought of as a functor from
$\Ho(L_E \Sp)$ to $\Ho(L_E \sset)$, it follows that 
for any $X$ and $Y$ in $\C$, $R \map(X,Y)$ must be an $E$--local simplicial set.

\end{proof}

By $\omega X$ we denote any stable frame on $X$. (This is consistent with Lenhardt's notation.)
We also note that the bifunctor
\[
\C \times L_E \Sp \longrightarrow \C, \,\,\, (X,A) \mapsto \omega X \sframe A
\]
possesses a total left derived functor. Since this is very similar to \cite[Corollary 6.6]{Len11} and our previous work in Section \ref{sec:Elocalframings}, we omit the proof. We denote this derived functor by $\sframe^L_E$. 

\begin{lemma}\label{Ederivedfunctor}
Let $\C$ be a simplicial and stably $E$--familiar model category. Further, let 
\[
F,G : X \longrightarrow Y
\]
be two maps of stable frames $X$ and $Y$ on $\C$ that agree on the sphere $\mathbb{S}$. Then the derived natural transformations
\[
X \Esframe - \longrightarrow Y \Esframe -
\]
induced by $F$ and $G$ agree. 
\end{lemma}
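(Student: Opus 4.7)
The plan is to reduce the stable assertion to the cosimplicial Lemma \ref{levelzero} applied spectrum-level by spectrum-level, in the same spirit as the proof of Lemma \ref{lem:evaluation}. I proceed in three steps.

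\smallskip

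\textit{Step 1 (levelwise level-zero agreement).} A map of stable frames $F \co X \to Y$ is a compatible family of cosimplicial frame maps $F_n \co X_n \to Y_n$, intertwining the structure maps $\Sigma X_n \to X_{n-1}$ and $\Sigma Y_n \to Y_{n-1}$, and similarly for $G$. The hypothesis that $F$ and $G$ agree on $\mathbb{S} = F_0 S^0$ translates to $F_0[0] = G_0[0]$ as maps $X_{0,0} \to Y_{0,0}$ in $\C$. Compatibility with the structure maps produces, in $\Ho(\C)$, a description of $\Sigma F_n[0]$ in terms of $F_{n-1}[0]$ (by pre- and post-composing with the structure weak equivalences), and similarly for $G$. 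Hence $F_{n-1}[0] = G_{n-1}[0]$ forces $\Sigma F_n[0] = \Sigma G_n[0]$ in $\Ho(\C)$, and stability of $\C$ (the invertibility of $\Sigma$) yields $F_n[0] = G_n[0]$. Induction on $n$, starting from $F_0[0] = G_0[0]$, gives $F_n[0] = G_n[0]$ in $\Ho(\C)$ for every $n \ge 0$.

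\smallskip

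\textit{Step 2 (cosimplicial derived transformations agree).} By Lemma \ref{lem:evaluation}, $\C$ is also $E$--familiar, so each $X_n$ and $Y_n$ is an $E$--local cosimplicial frame. Applying Lemma \ref{levelzero} at each level $n$ shows that the derived natural transformations
\[
X_n \Etensor K \longrightarrow Y_n \Etensor K
\]
induced by $F_n$ and $G_n$ agree for every simplicial set $K$.

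\smallskip

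\textit{Step 3 (assembly over spectrum levels).} For a cofibrant spectrum $A \in L_E \mathcal{S}$, the value $X \Esframe A$ is built from the cosimplicial pieces $X_n \Etensor \ev_n A$ using the structure maps, by \cite[Proposition 3.4]{Len11}. Since $X \sframe F_n K = X_n \tensor K$, the free-spectrum adjunctions $(F_n, \ev_n)$ together with the fact that $\Ho(L_E \mathcal{S})$ is generated (as a triangulated category with coproducts) by the free spectra $F_n K$ imply that the derived natural transformation $X \Esframe - \to Y \Esframe -$ is determined on $\Ho(L_E \mathcal{S})$ by its restrictions to these $F_n K$. Step 2 provides the required agreement on these test objects, so the two transformations agree on all of $L_E \mathcal{S}$.

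\smallskip

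The main obstacle I anticipate is Step 3: promoting levelwise agreement of cosimplicial derived transformations to a single global statement on $\Ho(L_E \mathcal{S})$. This is essentially formal, since the generating cofibrations of $L_E \mathcal{S}$ are free-spectrum images of cofibrations of simplicial sets, but the bookkeeping with the stable frame's structure maps and with $E$--localisation requires care. Steps 1 and 2 are routine given the cosimplicial machinery already developed in Sections \ref{sec:Elocalframings} and \ref{sec:Efamiliar}.
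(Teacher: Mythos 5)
Your reduction breaks down at Step 3. You assert that because the free spectra $F_nK$ generate $\Ho(L_E\mathcal{S})$ as a triangulated category with coproducts, a natural transformation between the derived functors $X \Esframe -$ and $Y \Esframe -$ is determined by its restriction to those objects. That principle is false for natural transformations between exact functors: agreement on a generating set does not propagate across exact triangles, because the map a morphism of triangles induces on a cone is not unique (this is exactly the phenomenon responsible for phantom maps). To assemble levelwise data you would instead have to argue at the point-set level, using that $X \sframe -$ preserves colimits on the nose and running a cell induction over a cofibrant replacement of an arbitrary spectrum; but that needs the two transformations to agree on the nose (or at least coherently) at every stage of the cell attachment, and Steps 1 and 2 only deliver agreement in $\Ho(\C)$, one spectrum level at a time, with no compatibility between levels. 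There is also a smaller mismatch feeding into this: Step 1 produces $F_n[0]=G_n[0]$ only up to homotopy, whereas Lemma \ref{levelzero} as stated takes maps that agree on level zero in $\C$; and the desuspension step silently identifies $(\Sigma F_n)[0]$ with the suspension in $\Ho(\C)$ of $F_n[0]$, which is itself an instance of the framing-uniqueness statements in question.

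The intended argument is much shorter and avoids all of this. The non-local statement --- two maps of stable frames agreeing on $\mathbb{S}$ induce the same derived natural transformation $X \sframe^L - \to Y \sframe^L -$ --- is exactly \cite[Corollary 4.11]{Len11}, proved there using the model structure on $\C^\Delta(\Sigma)$ rather than by a levelwise reduction. Since $\Sp$ and $L_E\Sp$ have the same cofibrations and the same trivial fibrations, a cofibrant replacement functor for $\Sp$ is also one for $L_E\Sp$, so $\Esframe$ is computed by literally the same formula as $\sframe^L$ on cofibrant replacements; the agreement of the two derived transformations is therefore inherited verbatim in the $E$--local setting. If you want a self-contained proof you should reprove Lenhardt's Corollary 4.11 directly (via homotopies of maps of cospectra), not attempt to rebuild it from Lemma \ref{levelzero} level by level.
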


Again, this requires no proof, we simply note that this uses \cite[Corollary 4.11]{Len11}. We see that a cofibrant replacement functor in $\Sp$ is automatically a cofibrant replacement functor in $L_E \Sp$. 

Now that we have established some of the properties that a stably $E$--familiar model category possesses, we can turn to the stable analogues of Theorem \ref{Emodule}, Proposition \ref{Esimplicial} and Theorem \ref{Efamiliarcharacterisation}.

\begin{theorem}\label{thm:EfamiliarEmodule}
Let $\C$ be stably $E$--familiar, then $\Ho(\C)$ is a $\Ho(L_E \mathcal{S})$--module category. 
Moreover, a stable model category $\C$ is stably $E$--familiar if and only if the $\Ho(\mathcal{S})$--module structure given by Theorem \ref{stablemodulestructure} factors over this module structure. 
\end{theorem}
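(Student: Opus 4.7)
The plan is to adapt the arguments used for Theorem \ref{Emodule} and \cite[Theorem 7.3]{Len11} to the stable $E$--local setting. Most of the technical infrastructure is already at hand: the derived bifunctor $\sframe^L_E$ was constructed immediately prior to the statement, and the rigidity Lemma \ref{Ederivedfunctor} allows one to detect agreement of derived natural transformations between stable frames by their behaviour on the sphere spectrum $\mathbb{S}$.

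For the first assertion, I would combine $\sframe^L_E$ with the two derived right adjoints coming from stable frames (the derived cotensor and the mapping spectrum functor $R\Map(-,-)$, which take values in $\Ho(L_E \Sp)$ thanks to stable $E$--familiarity) to obtain an adjunction of two variables
\[
\Ho(\C) \times \Ho(L_E \Sp) \longrightarrow \Ho(\C).
\]
The unit isomorphism $\omega X \sframe^L_E \mathbb{S} \cong X$ is inherited from Lenhardt's unstable unit isomorphism, since $\mathbb{S}$ is cofibrant in both $\Sp$ and $L_E \Sp$. The associativity isomorphism is built exactly as in \cite[Theorem 7.3]{Len11}, using that a cofibrant replacement functor in $L_E \Sp$ can be chosen to coincide with one in $\Sp$. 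The pentagon and unit coherence diagrams then commute in $\Ho(\C)$ because, as in Theorem \ref{Emodule}, both composites restrict to the same map on $\mathbb{S}$, and Lemma \ref{Ederivedfunctor} forces them to agree as derived natural transformations.

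For the biconditional, the forward direction is immediate from the shared cofibrant replacement functor: the diagram
\[
\xymatrix{ \Ho(\C) \times \Ho(\Sp) \ar[d]_{\id \times L\id}\ar[r]^(.6){-\sframe^L -} & \Ho(\C) \\
\Ho(\C) \times \Ho(L_E \Sp) \ar[ur]_{- \sframe^L_E -} &
}
\]
commutes by construction, with matching associativity and unit data. For the converse, suppose the $\Ho(\Sp)$--action on $\Ho(\C)$ factors through $\Ho(L_E \Sp)$. Given any stable frame $X$ on a fibrant--cofibrant object $A \in \C$, the Quillen adjunction $X \sframe -: \Sp \lradjunction \C$ realises the action on $A$, so by hypothesis the derived functor $X \sframe^L -$ inverts $E$--equivalences. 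Ken Brown's lemma then shows that $X \sframe -$ carries trivial cofibrations of $L_E \Sp$ to weak equivalences in $\C$, and since the cofibrations in $\Sp$ and $L_E \Sp$ agree, $X \sframe -$ is a left Quillen functor with respect to $L_E \Sp$. Hence $X$ is an $E$--local stable frame, establishing stable $E$--familiarity.

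The principal difficulty is the verification of the coherence diagrams. As observed in \cite[Remark 6.4]{Len11}, stable frames cannot be chosen strictly functorially when $\C$ is not simplicial, so strict commutativity in $\C$ is unavailable; the resolution, precisely as in the unstable case of Theorem \ref{Emodule}, is to reduce each diagram chase to a comparison on $\mathbb{S}$ via Lemma \ref{Ederivedfunctor}, which is transparent from the definitions.
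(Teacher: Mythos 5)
Your construction of the $\Ho(L_E \Sp)$--module structure is essentially the paper's proof: the same associativity composite $\tau$ built using the fact that a cofibrant replacement functor for $\Sp$ also serves for $L_E\Sp$, with naturality and the coherence diagrams verified by evaluating on $\mathbb{S}$ and appealing to the rigidity of stable frames (the paper invokes \cite[Theorem 6.10 and Corollary 6.11]{Len11} directly, comparing the frames $\omega X \sframe (\overline{K}\sframe -)$ and $\omega(\omega X \sframe K)\sframe -$, where you cite Lemma \ref{Ederivedfunctor}; these are the same principle). The forward direction of the biconditional via the commuting triangle likewise matches the unstable template of Theorem \ref{Emodule}.

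One step needs repair: in the converse direction you invoke Ken Brown's lemma to pass from ``$X \sframe^L -$ inverts $E$--equivalences'' to ``$X \sframe -$ takes trivial cofibrations of $L_E \Sp$ to weak equivalences''. Ken Brown's lemma runs the other way (from preservation of trivial cofibrations between cofibrant objects to preservation of weak equivalences between cofibrant objects), and a trivial cofibration of $L_E\Sp$ need not have cofibrant domain, so your hypothesis only directly handles trivial cofibrations between cofibrant objects. The conclusion is still correct and standard: either use that the generating trivial cofibrations of the localised model structure can be chosen with cofibrant domains, so that their images are trivial cofibrations of $\C$ and one closes up under pushouts, transfinite composition and retracts; or argue on the right adjoint via \cite[Corollary A.2]{Dug01}, as the paper does in the proof of Theorem \ref{localmappingspectrum}. (For what it is worth, the paper's own proof of this theorem only writes out the construction of the module structure and the coherence; the biconditional is left to the analogy with Theorem \ref{Efamiliarcharacterisation}, so your converse argument is a genuine addition once this step is patched.)
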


\begin{proof}
We need to construct an associativity isomorphism
\[
X \Esframe (K \Esmash L) \longrightarrow (X \Esframe K) \Esmash L
\]
that is natural in $X \in \C$ and $K, L \in \mathcal{S}^\Sigma$ and satisfies various coherence conditions, see our previous work in Theorem \ref{Emodule}. We begin with $X \in \C$ being fibrant and cofibrant. By $\overline{K}$ we denote the stable frame construction for a spectral category introduced in Example \ref{stableexample}.

Now consider the stable frames
\[
\omega X \sframe ( \overline{K} \sframe -) \,\,\mbox{and}\,\, \omega(\omega X \sframe K) \sframe -.
\]
Note that the first functor is a stable frame via composition of Quillen functors. They are both stable frames on the object $\omega X \sframe K \in \C$, so by \cite[Theorem 6.10]{Len11} we get a weak equivalence, natural in $L$,
\[
a: \omega X \sframe ( {K} \spectrasmash L) \longrightarrow \omega(\omega X \sframe K) \sframe L,
\]
remembering that $\overline{K} \sframe L = K \spectrasmash L$.
As in \cite[Theorem 5.5.3]{Hov99} we define our associativity isomorphism as the composite
\begin{multline}
\tau: \omega QX \sframe Q( {QK} \spectrasmash QL) \overset{1 \sframe q}{\to} 
\omega QX \sframe ( {QK} \spectrasmash QL)     \overset{a}{\to}
\omega(\omega QX \sframe QK) \sframe QL)  \\ \xrightarrow{(q \sframe 1)^{-1}}
\omega Q(\omega QX \sframe QK) \sframe QL)
\nonumber
\end{multline}
To show the necessary naturality and coherence conditions, we employ the same strategy as in previous proofs: we write down diagrams in $\C$ that do not necessarily commute. But since they commute in bidegree $(0,0)$, we can use \cite[Theorem 6.10 (b)]{Len11} and deduce that they commute in $\Ho(\C)$, which is what we are really after. 

The first diagram shows naturality in $X$. Let $X \longrightarrow Y$ be a morphism between fibrant and cofibrant objects in $\C$, then we have the diagram below, which will not usually commute. 
\[
\xymatrix{ \omega X \sframe (\overline{K} \sframe -) \ar[r]\ar[d] & \omega( \omega X \sframe K) \sframe - \ar[d] \\
\omega Y \sframe (\overline{K} \sframe -) \ar[r]& \omega( \omega Y \sframe K) \sframe - 
}
\]
Both clockwise and counterclockwise composites agree on the sphere spectrum $\mathbb{S}$, so by \cite[Theorem 6.10 and Corollary 6.11]{Len11} the above diagram commutes in $\Ho(\C)$, which we wanted to show. Naturality in $K$ is proved in a very similar fashion, so we omit it. 

Next, we prove fourfold associativity similarly to \cite[Theorem 5.5.3]{Hov99} using 
\cite[Corollary 6.11]{Len11}. The fourfold associativity diagram is
\[
\xymatrix{ 
\omega X \Esframe (\overline{K} \sframe (\overline{L} \sframe -)) 
\ar[r]^{(4)} \ar[d]_{(1)} 
& \omega(\omega X \Esframe K) \Esframe (\overline{L} \sframe -) 
\ar[dd]^{(5)} \\
\omega X \Esframe ((\overline{K \spectrasmash L}) \sframe -) 
\ar[d]_{(2)}  \\
\omega( \omega X \Esframe (\overline{K \sframe L})) \Esframe {-} \ar[r]_{(3)} 
& \omega( \omega ( \omega X \Esframe K) \Esframe L ) \Esframe -
}
\]
The map (1) is the identity on $\omega X$ applied to the associativity isomorphism in $\Ho(\mathcal{S})$. (We note that we discussed in Section 6 how the framing action agrees with the action derived from the spectral model structure.) The map (2) is any map covering the identity of $\omega X \Esframe (K \spectrasmash L)$ and the map (3) is $\omega \tau \Esframe -$, that is, any map of framings covering $\tau$. 

Now we turn to the clockwise maps, evaluated on the sphere $\mathbb{S}$, (4) is just $\tau$ and  
(5) is any map covering the identity on $(\omega X \Esframe K) \Esframe L$. 

If we evaluate each on the sphere then both the clockwise and anticlockwise composites are
just applications of $\tau$. Hence on homotopy categories these two composite maps agree. 
It follows that, as natural transformations of functors on homotopy categories, 
the diagram commutes, which is precisely the statement that four-fold associativity is coherent. 
\end{proof}

We can also ask if a Quillen adjunction between stably $E$--familiar model categories
is compatible with the $\Ho(L_E \mathcal{S})$--actions on the homotopy categories. 
the answer is Lemma \ref{lem:localmodfunctors}, which shows that any Quillen pair will be compatible. 

The next theorem establishes that $E$--local stable frames are indeed a generalisation of $L_E \mathcal{S}^\Sigma$--model category structures.

\begin{proposition}\label{stableagree}
Let $\C$ be a $L_E \mathcal{S}^\Sigma$--model category. Then the $\Ho(L_E \mathcal{S})$--module structure on $\Ho(\C)$ induced by stable framings agrees with the $\Ho(L_E \mathcal{S})$--module structure given by the $L_E \mathcal{S}^\Sigma$--model category structure.
\end{proposition}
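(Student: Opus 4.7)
The plan is to mirror the approach of Proposition \ref{Esimplicial}, showing that the identity functor
\[
\id : \Ho(\C) \longrightarrow \Ho(\C)
\]
is a $\Ho(L_E \mathcal{S})$--module functor where the source carries the module structure derived from the $L_E \mathcal{S}^\Sigma$--model category structure and the target carries the module structure of Theorem \ref{thm:EfamiliarEmodule} coming from $E$--local stable frames. This requires producing a natural isomorphism in $\Ho(\C)$ comparing the two actions, together with unit and associativity coherence data in the sense of \cite[Definition 4.1.7]{Hov99}. Note that a $L_E \mathcal{S}^\Sigma$--model category is stably $E$--familiar, so both sides of the desired comparison are defined.

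First I would observe that for any fibrant cofibrant $X \in \C$, Example \ref{stableexample} produces an explicit cospectrum $\overline{X}$ whose associated adjunction $\overline{X} \sframe -$ is, up to the Quillen equivalence $(V,U)\co \mathcal{S} \lradjunction \mathcal{S}^\Sigma$, precisely the $\mathcal{S}^\Sigma$--action supplied by the spectral structure. Because $\C$ is a $L_E \mathcal{S}^\Sigma$--model category, this composite is a Quillen adjunction $L_E \mathcal{S} \lradjunction \C$, so $\overline{X}$ is an $E$--local stable frame in the sense of Section \ref{sec:Efamiliarstable}. By the uniqueness statement for stable frames \cite[Proposition 4.7]{Len11} (which passes to the $E$--local setting since cofibrant replacement in $\mathcal{S}$ doubles as cofibrant replacement in $L_E \mathcal{S}$), any other choice of stable frame $\omega X$ on $X$ is connected to $\overline{X}$ by a weak equivalence of cospectra. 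Deriving this comparison yields a natural isomorphism
\[
\sigma_{X,K} \co X \Esframe K \;\cong\; \overline{X} \Esframe K \longrightarrow \omega X \Esframe K
\]
in $\Ho(\C)$ which is natural in both variables, where the leftmost $\Esframe$ denotes the derived spectral action and the rightmost denotes the derived framing action.

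Second, I would verify the unit and associativity coherence diagrams. The unit diagram is immediate because $\overline{X} \sframe \mathbb{S} \cong X$, so $\sigma_{X,\mathbb{S}}$ covers the identity. For the associativity square
\[
\xymatrix{
X \Esframe (K \Esmash L) \ar[r] \ar[d]
& \omega X \Esframe (K \Esmash L) \ar[d] \\
(X \Esframe K) \Esframe L \ar[r]
& (\omega X \Esframe K) \Esframe L
}
\]
the left vertical map is the associator of the $L_E \mathcal{S}^\Sigma$--action and the right vertical map is the associator $\tau$ constructed in the proof of Theorem \ref{thm:EfamiliarEmodule}. Both composites realise maps of stable frames on the underlying object $X \Esframe (K \Esmash L)$ that, when evaluated on $\mathbb{S}$ in bidegree $(0,0)$, reduce to the canonical identification inherited from the smash product in $L_E \mathcal{S}^\Sigma$. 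Lemma \ref{Ederivedfunctor}, the $E$--local analogue of \cite[Corollary 6.11]{Len11}, then forces the two composites to agree as derived natural transformations, giving the required commutativity in $\Ho(\C)$.

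The main obstacle will be bookkeeping the three different cofibrant replacement functors involved --- those of $\C$, of $L_E \mathcal{S}^\Sigma$, and of $L_E \mathcal{S}$ --- because the two associativity isomorphisms are built very differently. The strategy that avoids an unpleasant direct comparison is to route everything through the explicit cospectrum $\overline{X}$ from Example \ref{stableexample} and then reduce each coherence check to bidegree $(0,0)$ on the sphere spectrum, where both sides collapse to the same map. Once this is in place, Lemma \ref{Ederivedfunctor} does all the work of upgrading from a $(0,0)$--level agreement to equality in $\Ho(\C)$, and the proposition follows.
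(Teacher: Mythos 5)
Your proposal is correct and follows essentially the same route as the paper's proof: exhibiting the identity as a $\Ho(L_E \mathcal{S})$--module functor, using the explicit frame $\overline{X}$ of Example \ref{stableexample} together with the uniqueness of stable frames (\cite[Proposition 4.7]{Len11}) to build the comparison isomorphism, and reducing the coherence diagrams to agreement on the sphere in bidegree $(0,0)$ via Lemma \ref{Ederivedfunctor}. The only slip is cosmetic: the two composites in your associativity square are maps of stable frames on the object $X \Esframe K$ (not on $X \Esframe (K \Esmash L)$), evaluated at $L$.
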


\begin{proof}
We show that the identity
\[
\id: \Ho(\C) \longrightarrow \Ho(\C) 
\]
is a $\Ho(L_E \mathcal{S})$--module functor, similar to what we did in Proposition \ref{Esimplicial}. Here, the domain has the $\Ho(L_E \mathcal{S})$--action that is derived from the $L_E \mathcal{S}^\Sigma$ model category structure. The module structure on the codomain is induced by $E$--local stable frames. 

This means we have to construct a natural isomorphism
\[
X \Esmash K \longrightarrow \omega X \Esframe K
\] 
where the first product is part of the $L_E \mathcal{S}^\Sigma$--structure and $\omega X$ is a stable framing for $X \in \C$.

We saw in Example \ref{stableexample} that there is a framing $\overline{X}$ on the object $X$ using the spectral structure that agrees with $X \Esmash -$. So, by \cite[Proposition 4.7]{Len11}, there is a map extending the identity on level (0,0) to a map of stable framings. By Lemma \ref{Ederivedfunctor}, this induces the desired isomorphism above. 

We have to show that it satisfies the necessary coherence conditions. Again, the unit condition is easily seen. Now we consider the diagram
\[
\xymatrix{ (\overline{X} \sframe K) \spectrasmash L \ar[d]\ar[r]& \omega (X \spectrasmash K) \sframe L \ar[dd] \\
\ar[d] \overline{X} \sframe (K \spectrasmash L) & \\
\ar[r]  \omega X \sframe (K \spectrasmash L) & \omega( \omega X \sframe K) \sframe L 
}
\]
The functor $(\overline{X} \sframe K) \sframe -$ agrees with the functor $\overline{(\overline{X} \sframe K)} \sframe -$, which is a stable frame for the object $X \spectrasmash K \in \C$. But so is $ \omega( \omega X \sframe K) \sframe -$.  Together with Lemma \ref{Ederivedfunctor} we hence see that the clockwise and counterclockwise compositions in the above diagram commute in $\Ho(\C)$, which is what we wanted to prove.

\end{proof}

Recall that a localisation functor $L_E$ is \textbf{smashing} if the map $$X \longrightarrow X \spectrasmash L_E \mathbb{S}$$ is an $E$--localisation for any spectrum $X$. Examples of smashing localisations include the Johnson--Wilson theories $E(n)$, which we are going to talk about in more detail in Section \ref{sec:applications}. A example of a localisation that is not smashing is localising with respect to a Morava $K$--theory, $K(n)$. In the case of a smashing localisation there is a relatively simple criterion for being stably $E$--familiar. 

\begin{proposition}
Let $E$ be a homology theory for which $L_E$ is smashing. Then $\C$ is stably $E$--familiar if and only if  the map
\[
X \sframe \lambda: X \cong X \sframe \mathbb{S} \longrightarrow X \wedge L_E \mathbb{S}
\]
is a weak equivalence in $\C$ for all stable frames $X$.

If furthermore $\C$ has a set of small weak generators $\mathcal{G}$, then $\C$ is
stably $E$--familiar if and only if the map 
$Y \to Y \sframe^L L_E \mathbb{S}$
is a weak equivalence for each $Y \in \mathcal{G}$.
\end{proposition}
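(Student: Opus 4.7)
The plan is to prove the first equivalence in both directions, then reduce the verification to $\mathcal{G}$ by a standard localising subcategory argument.

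For the forward direction, I would assume $\C$ is stably $E$--familiar, so that each stable frame yields a Quillen adjunction $X \sframe - : L_E \mathcal{S} \lradjunction \C$. The localisation map $\lambda \co \mathbb{S} \to L_E \mathbb{S}$ is an $E$--equivalence between cofibrant spectra, and Ken Brown's lemma then gives that $X \sframe \lambda$ is a weak equivalence in $\C$.

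For the converse, suppose $X \sframe \lambda$ is a weak equivalence in $\C$ for every stable frame $X$. I want to show that $X \sframe - : \mathcal{S} \to \C$ descends to a left Quillen functor from $L_E \mathcal{S}$; by the usual criterion for Quillen pairs into a Bousfield localisation, it suffices to check that $X \sframe -$ sends $E$--equivalences between cofibrant spectra to weak equivalences in $\C$. Let $f \co A \to B$ be such an $E$--equivalence. Smashing with $\lambda$ and using naturality produces a square in $\Ho(\C)$
\[
\xymatrix{
X \Esframe A \ar[r]^{X \Esframe f} \ar[d] & X \Esframe B \ar[d] \\
X \Esframe (A \Dsmash L_E \mathbb{S}) \ar[r] & X \Esframe (B \Dsmash L_E \mathbb{S})
}
\]
The associator of the $\Ho(\mathcal{S})$--module action on $\Ho(\C)$ from Theorem \ref{stablemodule} identifies $X \Esframe (A \Dsmash L_E \mathbb{S})$ with $(X \Esframe A) \Esframe L_E \mathbb{S}$, and under this isomorphism the left vertical map becomes the canonical comparison $Y \to Y \Esframe L_E \mathbb{S}$ for $Y = X \Esframe A$. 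Applying the hypothesis to a stable frame on a fibrant and cofibrant replacement of $Y$ shows this vertical is an isomorphism in $\Ho(\C)$; the right vertical is handled symmetrically. Since $L_E$ is smashing, $f \Dsmash L_E \mathbb{S}$ is a weak equivalence of spectra, so the bottom horizontal map is a weak equivalence because $X \sframe -$ is already a left Quillen functor on $\mathcal{S}$. Two-out-of-three gives the conclusion.

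For the generator statement, let $\mathcal{D} \subseteq \Ho(\C)$ be the full subcategory of objects $Y$ for which the natural comparison $Y \to Y \Esframe L_E \mathbb{S}$ is an isomorphism. Both the identity and the endofunctor $- \Esframe L_E \mathbb{S}$ are exact and preserve arbitrary coproducts on $\Ho(\C)$ (the latter because the closed $\Ho(\mathcal{S})$--module structure makes it a left adjoint), so $\mathcal{D}$ is closed under shifts, triangles, coproducts and retracts; that is, $\mathcal{D}$ is a localising subcategory. If $\mathcal{G}$ is a set of small weak generators contained in $\mathcal{D}$, then the localising subcategory generated by $\mathcal{G}$ exhausts $\Ho(\C)$, giving $\mathcal{D} = \Ho(\C)$; combined with the main equivalence this completes the proof. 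The main obstacle is the converse direction, specifically verifying that the associator of the $\Ho(\mathcal{S})$--module action truly identifies the map induced by $\lambda$ with the expected comparison $Y \to Y \Esframe L_E \mathbb{S}$. This naturality can be read off from Lenhardt's explicit construction of the associator, following the same pattern as Lemma \ref{Ederivedfunctor} and its use in Theorem \ref{thm:EfamiliarEmodule}.
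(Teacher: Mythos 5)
Your argument is correct and follows essentially the same route as the paper: the forward direction is immediate from $\lambda$ being an $E$--equivalence, the converse uses exactly the same naturality square comparing $X \sframe f$ with $X \sframe (f \spectrasmash L_E \mathbb{S})$ together with the fact that, $L_E$ being smashing, $E$--equivalences between $E$--local spectra are $\pi_*$--isomorphisms, and the generator statement is the same localising-subcategory reduction. The only difference is that you spell out the associator identification of $X \sframe (A \spectrasmash L_E\mathbb{S})$ with $(X \sframe^L A) \sframe^L L_E\mathbb{S}$ needed to apply the hypothesis to the vertical maps, a point the paper passes over with ``by assumption''; your version is the more careful one (just note that the derived smash in your square should be written $\sframe^L$ rather than $\sframe^L_E$, since the latter is not yet known to exist at that stage).
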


\begin{proof}
The ``only if'' part is obvious: the map $\lambda: \mathbb{S} \longrightarrow L_E \mathbb{S}$ is an $E$--equivalence. So if $\C$ is stably $E$--familiar, $X \sframe \lambda$ is a weak equivalence in $\C$ by definition.

Conversely, assume that $X \wedge \lambda$ is a weak equivalence. To show that $\C$ is stably $E$--familiar we need to show that the functor $X \sframe -$ sends $E$--equivalences to weak equivalences in $\C$. Let $f: K \longrightarrow L$ be an $E$--equivalence of spectra. Then the following diagram commutes.
\[
\xymatrix{ X \sframe K  \ar[r]^{X \sframe f}\ar[d]_{\sim} & X \sframe L \ar[d]^{\sim}\\
X \sframe (L_E \mathbb{S} \spectrasmash K) \ar[r] & X \sframe (L_E \mathbb{S} \spectrasmash L) 
}
\]
By assumption, the vertical maps are weak equivalences in $\C$. Since $E$ is smashing, the spectra $L_E \mathbb{S} \spectrasmash K$ and $L_E \mathbb{S} \spectrasmash L$ are $E$--local. The map $f$ is an $E$--equivalence, so it also induces an $E$--equivalence between $L_E \mathbb{S} \spectrasmash K$ and $L_E \mathbb{S} \spectrasmash L$. But $E$--equivalences between $E$--local spectra are $\pi_*$--isomorphisms. We know that $X \sframe -$ sends $\pi_*$--isomorphisms to weak equivalences in $\C$, so the bottom horizontal arrow in the above diagram is also a weak equivalence. By the 2-out-of-3 axiom the top horizontal arrow is also a weak equivalence, as required. 

The second statement follows since any element of $\Ho(\C)$ can be built from 
the generators via coproducts and triangles, which are preserved by 
$\sframe^L$.
\end{proof}

We now state
the central characterisation of stable $E$--familiarity.

\begin{theorem}\label{localmappingspectrum}
A model category $\C$ is stably $E$--familiar if and only if every homotopy mapping spectrum $\Map(X,Y)$ is an $E$--local spectrum.
\end{theorem}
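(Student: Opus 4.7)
The plan is to mirror the proof of Theorem~\ref{Efamiliarcharacterisation} in the stable setting. The unstable argument reduced the question to whether $\C(X^\bullet,-)$ is right Quillen into $L_E \sset$, and it relied on two ingredients: the identification of $E$--fibrant simplicial sets as those that are both fibrant and $E$--local (Corollary~\ref{cor:localmappingspace}), and Dugger's criterion \cite[Corollary A.2]{Dug01} which says a right Quillen functor is detected on fibrations between fibrant objects. Both ingredients have direct spectral analogues: an object of $\mathcal{S}$ is $E$--fibrant if and only if it is an $\Omega$--spectrum and $E$--local as a spectrum, and Dugger's criterion works equally well for $L_E \mathcal{S}$, which shares its cofibrations and trivial fibrations with $\mathcal{S}$.

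For the forward implication, I would take a fibrant--cofibrant $X \in \C$ with stable frame $\omega X$. By hypothesis the adjunction
\[
\omega X \sframe - : L_E \mathcal{S} \lradjunction \C : \Map(\omega X, -)
\]
is Quillen, so the right adjoint sends fibrant $Y \in \C$ to fibrant objects of $L_E \mathcal{S}$, which are in particular $E$--local. Modelling the derived mapping spectrum by $\Map(\omega X, Y)$ with $Y$ fibrant (every fibrant--cofibrant object admits a stable frame by Lenhardt), this immediately yields that $R\Map(X,Y)$ is $E$--local for every $X$ and $Y$.

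For the converse I would work with an arbitrary stable frame $\omega X$. Since $\omega X$ is already a stable frame, $\Map(\omega X, -) : \C \to \mathcal{S}$ is right Quillen, so I only need to upgrade this to a right Quillen functor into $L_E \mathcal{S}$. By Dugger's criterion it suffices to show that fibrations between fibrant objects of $\C$ are carried to $E$--fibrations. For fibrant $Y \in \C$, $\Map(\omega X, Y)$ is an $\Omega$--spectrum and, by hypothesis, $E$--local, hence $E$--fibrant. A fibration of $\mathcal{S}$ between $E$--fibrant spectra is automatically an $E$--fibration; this is the stable analogue of the fact invoked at the end of the proof of Theorem~\ref{Efamiliarcharacterisation}, and follows from a standard Bousfield localisation argument. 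Applying this to the image of a fibration between fibrant objects of $\C$ closes the argument.

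The main obstacle I foresee is verifying the characterisation of $E$--fibrant spectra (``fibrant plus $E$--local'') in the generality needed here. Example~\ref{ex:plocalspectra} warns that the tempting level-wise version of such a characterisation can fail, but the version at the level of the whole spectrum holds for every Bousfield localisation of $\mathcal{S}$, so once this and its consequence for fibrations between $E$--fibrant spectra are in place the rest of the proof transcribes almost verbatim from the unstable case.
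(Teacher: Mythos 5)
Your proposal is correct and follows essentially the same route as the paper: both directions reduce to the identification of the fibrant objects of $L_E\mathcal{S}$ as the $E$--local $\Omega$--spectra, and the converse is handled exactly as in the paper via Dugger's criterion \cite[Corollary A.2]{Dug01} together with the fact that (level) fibrations between $E$--fibrant spectra are $E$--fibrations, as in \cite[Proposition 3.2]{Roi07}. The only cosmetic difference is that the paper phrases the intermediate step in terms of level fibrations of $\Map(X,-)$ rather than using directly that $\Map(\omega X,-)$ is already right Quillen into $\mathcal{S}$; this changes nothing of substance.
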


\begin{proof}
The ``only if'' part is simple: $\Map(X,Y)$ sends sends fibrant objects to $E$--fibrant spectra, and those are local.

As for the converse, assume that $\Map(X,Y)$ is $E$--local for fibrant $Y$. The functor
\[
\Map(X,-): \C \longrightarrow \mathcal{S}
\]
preserves trivial fibrations, so
\[
\Map(X,-): \C \longrightarrow L_E \mathcal{S}
\]
also does. Thus we still need to show that $\Map(X,-)$ preserves fibrations. This is done in the following four steps, similar to \cite[Proposition 3.2]{Roi07}. 
\begin{enumerate}
\item The functor $\Map(X,-)$ preserves fibrant objects.
\item The functor $\Map(X,-)$ sends fibrations to level fibrations.
\item In $L_E \mathcal{S}$, level fibrations between fibrant objects are fibrations.
\item If a functor that preserves trivial fibrations also preserves fibrations between fibrant objects, it is a right Quillen functor.
\end{enumerate}
The fibrant objects of $L_E \mathcal{S}$ are the $E$--local $\Omega$--spectra. For fibrant $Y$, the spectrum $\Map(X,Y)$ is an $\Omega$--spectrum by construction. Further, it has been assumed to be $E$--local, so (1) is satisfied. 
The second point is again satisfied by construction as $$\Map(X,Y)_n=\map(X_n,Y)$$ with $X_n$ a cosimplicial frame.
The third point has been proved explicitly in \cite[Proposition 3.2]{Roi07}.
Finally, (4) is Corollary A.2 in \cite{Dug01}. This completes the proof. 
\end{proof}

Composition of morphisms in $\C$ makes $R\Map(X,Y)$ into a module spectrum over $R\Map(X,X)$. (Here, we mean ring and module objects in the stable homotopy category rather than referring to structured ring spectra in the underlying model categories.) Since module spectra over $E$--local spectra are again $E$--local, provided $E$ is a ring spectrum,  \cite[Proposition 1.17]{Rav84}, we can also state the following. 

\begin{corollary}\label{cor:localringspectrum}
If $E$ is a ring spectrum, then model category $\C$ is stably $E$--familiar if and only if the spectra $R\Map(X,X)$ are $E$--local for all $X \in \C$.
\end{corollary}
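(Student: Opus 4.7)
The plan is to reduce the statement to Theorem \ref{localmappingspectrum} via the module spectrum structure recalled in the paragraph preceding the corollary.

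One direction is immediate: if $\C$ is stably $E$--familiar, then by Theorem \ref{localmappingspectrum} every mapping spectrum $R\Map(X,Y)$ is $E$--local, and in particular so is $R\Map(X,X)$.

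For the converse, suppose $R\Map(X,X)$ is $E$--local for every $X \in \C$. By Theorem \ref{localmappingspectrum} it suffices to show that $R\Map(X,Y)$ is $E$--local for every pair of objects $X,Y \in \C$. First I would recall that composition
\[
R\Map(Y,Y) \spectrasmash R\Map(X,Y) \longrightarrow R\Map(X,Y)
\]
endows $R\Map(X,Y)$ with the structure of a left module, in the stable homotopy category, over the ring spectrum $R\Map(Y,Y)$. By hypothesis the ring spectrum $R\Map(Y,Y)$ is $E$--local. Since $E$ is a ring spectrum, the cited \cite[Proposition 1.17]{Rav84} says that any module over an $E$--local ring spectrum is itself $E$--local, and so $R\Map(X,Y)$ is $E$--local as desired.

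There is no real obstacle: the content of the argument is entirely contained in the module structure result from \cite{Rav84} and the characterisation of stable $E$--familiarity from Theorem \ref{localmappingspectrum}. The only minor point worth noting is that one must work with ring and module structures in $\Ho(\Sp)$ (rather than at the model-categorical level), which is why we need $E$ to be a ring spectrum so that the relevant closure property of $E$--local spectra under modules is available.
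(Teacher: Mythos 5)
Your argument is correct and is essentially the paper's own: the paper likewise deduces the corollary from Theorem \ref{localmappingspectrum} by observing that composition makes $R\Map(X,Y)$ a module over an endomorphism ring spectrum (the paper uses $R\Map(X,X)$ acting by precomposition where you use $R\Map(Y,Y)$ by postcomposition, an immaterial difference) and then invoking \cite[Proposition 1.17]{Rav84}. No gap.
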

\qed

Note that if $L_E$ is smashing, then $L_E \mathbb{S}$ is a ring spectrum and $L_E = L_{L_E \mathbb{S}}$, so 
the above holds for all smashing localisations. 

For the special case $E= K_{(2)}$, this criterion was the key point in the main result of \cite{Roi07}. We are going to investigate this relation further in Subsection \ref{rigidity}.

\bigskip
We can also conclude that being stably $E$--familiar is invariant under Quillen equivalence. 

\begin{lemma}\label{lem:Quilleninvariance}
Let $F: \C \lradjunction \mathcal{D}: G$ be a Quillen equivalence. Then $\C$ is stably $E$--familiar if and only if $\mathcal{D}$ is. 
\end{lemma}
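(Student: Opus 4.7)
The plan is to reduce to the characterization of stable $E$--familiarity via mapping spectra given in Theorem \ref{localmappingspectrum}: a model category $\C$ is stably $E$--familiar if and only if every derived mapping spectrum $R\Map_\C(X,Y)$ is $E$--local. Since the statement is symmetric in $\C$ and $\mathcal{D}$, it suffices to prove one direction. Thus, the core task is to show that a Quillen equivalence induces a natural weak equivalence of derived mapping spectra, so that $E$--locality transfers between $\C$ and $\mathcal{D}$.

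First I would establish the key compatibility: for cofibrant $X \in \C$ and fibrant $Y \in \mathcal{D}$, there is a natural isomorphism in $\Ho(\Sp)$
\[
R\Map_\mathcal{D}(LFX, Y) \;\simeq\; R\Map_\C(X, RGY).
\]
To see this I would pick a stable frame $\omega X$ on a cofibrant--fibrant representative of $X$, and argue that applying $F$ levelwise to $\omega X$ produces a $\Sigma$--cospectrum $F\omega X$ in $\mathcal{D}$ which is, up to weak equivalence, a stable frame for $FX$. This uses that $F$, being a left Quillen functor, preserves cofibrancy of each $(\omega X)_n$ and preserves the weak equivalences $\Sigma(\omega X)_n \to (\omega X)_{n-1}$ between cofibrant cosimplicial objects (and commutes with the suspension construction in $\C^\Delta$ up to natural weak equivalence, as $F$ commutes with the tensor with $S^1$ coming from the framing). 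Uniqueness of stable frames \cite[Proposition 4.7]{Len11} then identifies $F\omega X$ with a stable frame on $FX$ in the homotopy category, and the Quillen adjunction $F \dashv G$ supplies the isomorphism of mapping spectra on each level, which assembles into the displayed equivalence after deriving.

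With that in hand, the lemma follows quickly. Suppose $\C$ is stably $E$--familiar. Given any $A, B \in \mathcal{D}$, the Quillen equivalence provides an isomorphism $A \cong LFX$ in $\Ho(\mathcal{D})$ (take $X = RGA$ and use that the counit is a weak equivalence). Hence
\[
R\Map_\mathcal{D}(A,B) \;\simeq\; R\Map_\mathcal{D}(LFX, B) \;\simeq\; R\Map_\C(X, RGB),
\]
and the right-hand side is $E$--local by hypothesis. Applying Theorem \ref{localmappingspectrum} in $\mathcal{D}$ shows that $\mathcal{D}$ is stably $E$--familiar. The reverse direction is symmetric, using that $G$ is also part of a Quillen equivalence.

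The main obstacle, and the step requiring the most care, is verifying that the image of a stable frame under a left Quillen functor really is a stable frame on the image object, up to the weak equivalences that \cite{Len11} permits. Since stable frames in a non-simplicial model category are only defined up to a zig-zag of weak equivalences in $\C^\Delta(\Sigma)$, one must work in the homotopy category throughout; the compatibility of $F$ with suspension of cosimplicial frames (i.e. with the $- \sframe (- \ssetsmash S^1)$ construction) is the delicate point, but it follows from $F$ being a left Quillen functor together with the observation that tensoring with $S^1$ is computed using the framing, which $F$ preserves on cofibrant objects.
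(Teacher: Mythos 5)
Your proposal is correct and follows essentially the same route as the paper: both reduce to the characterisation of stable $E$--familiarity via $E$--local mapping spectra (Theorem \ref{localmappingspectrum}) and then transfer locality across the derived adjunction $R\Map_\mathcal{D}(LFX,Y)\simeq R\Map_\C(X,RGY)$ using that the unit and counit of a Quillen equivalence are weak equivalences. The only difference is that you spell out the adjunction identity by pushing a stable frame through $F$ levelwise, where the paper simply invokes a ``standard adjunction argument'' together with \cite[Theorem 7.3]{Len11}; your verification of that step is sound.
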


\begin{proof}
The heart of this proposition is Theorem \ref{localmappingspectrum}, 
along with the fact that if there is a $\pi_*$--isomorphism of spectra 
$f \co X \to Y$ then $X$ is $E$--local if and only if $Y$ is. 
Thus we must show that the mapping spectra of these two categories agree. The key input 
to this is \cite[Theorem 7.3]{Len11} which states that the functor $LF$ is a 
$\Ho(\Sp)$--module functor. 

Take $C \in \mathcal{C}$ and $D \in \mathcal{D}$,
then by a standard adjunction argument the spectra $R \Map(LF(C), D)$ and $R \Map(C, RG(D))$ are weakly equivalent.

Now we have all the pieces ready. If $\mathcal{D}$ is stably $E$--familiar, 
then take any pair of objects $C_1$, $C_2$ in $\mathcal{C}$. 
Since we have a Quillen equivalence, the unit of the derived adjunction,
$\id \to RGLF$,
induces a weak equivalence of spectra 
$$R \Map (C_1, C_2) \to 
R \Map (C_1, RGLF(C_2)) \simeq 
R \Map (LF(C_1), LF(C_2)).$$ 
The right hand side of the above is 
which is $E$--local as $\mathcal{D}$ is stably
$E$--familiar. Thus all mapping spectra of $\mathcal{C}$ are $E$--local.

Conversely, assume that $\mathcal{C}$ is stably $E$--familiar,
then for any $D_1$ and $D_2$ of $\mathcal{D}$
the mapping spectrum $R \Map (D_1, D_2)$ is weakly equivalent to 
$R \Map (LF RG(D_1), D_2)$ as the counit of the derived adjunction
is a weak equivalence. By adjunction as before we can conclude that 
$R \Map (D_1, D_2)$ is stably equivalent to 
the $E$--local spectrum $R \Map (RG(D_1), RG(D_2))$. Thus all 
mapping spectra of $\mathcal{D}$ are $E$--local. 
\end{proof}

\bigskip
Unfortunately, ``stable'' together with ``$E$--familiar'' does not imply ``stably $E$--familiar''. We are going to look at the difference in the next section.

\section{``Stable and \texorpdfstring{$E$}{E}--familiar'' versus ``Stably \texorpdfstring{$E$}{E}--familiar''}\label{sec:versus}

We are now going to investigate the difference between $E$--familiar model categories that are also stable and stably $E$--familiar model categories. 
As a reminder, an $E$--familiar model category $\C$ is a model category where all cosimplicial frames
\[
\sset \lradjunction \C
\]
factor over $E$--local simplicial sets
\[
L_E \sset \lradjunction \C.
\]
A stably $E$--familiar model category is a model category where all stable frames
\[
\mathcal{S} \lradjunction \C
\]
factor over $E$--local sequential spectra
\[
L_E \mathcal{S} \lradjunction \C.
\]

Unfortunately, those two notions are not equivalent. We saw that a stably $E$--familiar model category is also $E$--familiar in Lemma \ref{lem:evaluation}, it is stable by definition. However, the converse is not true. The difference can be seen in the mapping spectra. We saw in Theorem \ref{localmappingspectrum} that a model category is stably $E$--familiar if and only if its mapping spectra are $E$--local. If the model category is only $E$--familiar and not stably $E$--familiar it only implies that the level spaces of each mapping spectrum are $E$--local. Although it also implies that the structure maps in the mapping spectra are weak equivalences, this is not enough to deduce that a spectrum is $E$--local. For example, it does not hold for $E = \h \mathbb{Z}/p$ as the colimit of $p$--complete groups is not necessarily $p$--complete.

\bigskip
Applying \cite{Hov01general}, or 
\cite{Sch97} to the model category $L_E \sset$ we obtain 
a model category of sequential spectra in $L_E \sset$. This model structure is denoted $\Sp(L_E \sset)$. The key to defining this model structure is the functor 
\[
Q: \Sp \longrightarrow \Sp
\]
which is the composition of a levelwise $E$--fibrant replacement functor and a fibrant replacement functor of sequential spectra.
Recall from \cite{Sch97} that the levelwise $E$--fibrant replacement functor 
$R_l$, is defined on a spectrum $X$ as follows:
$(R_l X)_0$ is the $E$--fibrant replacement of $X_0$
in $L_E \sset$. Then one considers the factorisation of 
the trivial map 
\[
X_k \coprod_{\Sigma X_{k-1}} \Sigma (R_l X_{k-1}) 
\overset{\sim}{\rightarrowtail} (R_l X)_k
\twoheadrightarrow *
\]
to obtain an $E$--fibrant space $(R_l X)_k$
with a levelwise $E$--equivalence $\eta_X \co X \to R_l X$. 

\begin{definition}
A map $f$ of spectra is a \textbf{$Q$--equivalence} if and only if $Qf$ is a levelwise weak equivalence in $\sset$.
\end{definition}

It is not hard to see that the class of $Q$--equivalences is the class of maps $f$ 
such that $R_l f$ is a $\pi_*$--isomorphism of spectra.

\begin{definition}
The \textbf{model category of spectra in $L_E \sset$}, $\Sp(L_E \sset)$,
is a model structure on the category $\Sp$ defined as follows. 
\begin{itemize}
\item Weak equivalences are the $Q$--equivalences.
\item Cofibrations are the cofibrations of $\Sp$. 
\item Fibrations are those maps that have the RLP with respect to cofibrations that are also $Q$--equivalences. 
\end{itemize}
\end{definition}

By \cite[Theorem 8.11]{Hov01general}, this defines a model structure.
Its fibrant objects, known as \textbf{$U$--spectra},
are the spectra whose spaces are $E$--fibrant and whose structure maps are 
weak equivalences of simplicial sets.

\begin{lemma}
There is a Quillen pair between spectra in $L_E \sset$ 
and $E$--local spectra. 
\[
\id : \Sp(L_E \sset)  
\,\,\raisebox{-0.1\height}{$\overrightarrow{\longleftarrow}$}\,\, 
L_E \Sp : \id.
\]
\end{lemma}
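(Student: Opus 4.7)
Both $\Sp(L_E \sset)$ and $L_E \Sp$ have as their underlying category the category of sequential spectra $\Sp$, so the identity is trivially an adjunction (with right adjoint also the identity). To produce a Quillen pair it suffices to check that the identity $\Sp(L_E \sset) \to L_E \Sp$ preserves cofibrations and trivial cofibrations. The cofibration condition is immediate, since by construction both model structures take the cofibrations of the Bousfield--Friedlander stable model structure on $\Sp$ as their cofibrations.

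The heart of the proof is to show that every $Q$--equivalence is an $E$--equivalence; this at once gives that the identity preserves trivial cofibrations. Given a $Q$--equivalence $f \co X \to Y$, I would exploit the naturality square associated to the natural transformation $\id \Rightarrow Q = R_f \circ R_l$,
\[
\xymatrix{
X \ar[r]^{f} \ar[d] & Y \ar[d] \\
QX \ar[r]^{Qf} & QY
}
\]
and show that three of the four sides are $E$--equivalences, so that two-out-of-three forces $f$ to be an $E$--equivalence.

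The two verticals factor as $X \to R_l X \to R_f R_l X = QX$. The first factor is a levelwise $E$--equivalence by the construction of $R_l$; since $E_*$--homology of a spectrum is the sequential colimit of $E_*$--homology of the level spaces, a levelwise $E_*$--isomorphism yields an $E_*$--isomorphism on the spectrum. The second factor is a $\pi_*$--isomorphism arising from the stable fibrant replacement in $\Sp$, and every $\pi_*$--isomorphism is an $E$--equivalence. Finally, the bottom map $Qf$ is a level weak equivalence of simplicial sets by the definition of a $Q$--equivalence, hence a $\pi_*$--isomorphism of spectra, hence again an $E$--equivalence. Two-out-of-three closes the argument, and the Quillen pair follows.

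The main obstacle is essentially notational: one must keep three different model structures on $\Sp$ clearly distinguished (Bousfield--Friedlander, $\Sp(L_E \sset)$, and $L_E \Sp$) and be comfortable with the fact that $E_*$--homology commutes with the sequential colimit defining $\pi_*$ of a spectrum. Once these points are in hand, the verification is purely formal.
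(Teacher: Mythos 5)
Your proof is correct and follows essentially the same route as the paper: both arguments note that the cofibrations coincide and then run a two-out-of-three argument on the naturality square for the fibrant replacement, using that levelwise $E$--equivalences and $\pi_*$--isomorphisms of spectra are both $E$--equivalences. The only cosmetic difference is that you phrase the square in terms of $Q = R_f \circ R_l$ and factor the verticals through $R_l$, whereas the paper works directly with $R_l$ and the observation that $f$ is a $Q$--equivalence exactly when $R_l f$ is a $\pi_*$--isomorphism; your explicit justification that $E_*$ of a spectrum is the colimit of the $E_*$--homology of its levels is a welcome detail the paper leaves implicit.
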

\begin{proof}
The cofibrations are the same for both model categories. 
We now show that a $Q$--equivalence is an $E$--equivalence. 
If $f \co X \to Y$ is a $Q$--equivalence, then $R_l f$ is a $\pi_*$--isomorphism, 
hence $R_l f$ is an $E$--equivalence. Now consider the following commutative diagram.
\[
\xymatrix{ \eta_X: X \ar[r]\ar[d]_{f}& \ar[d]^{R_l f} R_l X \\
\eta_Y: Y \ar[r] & R_l Y }
\]
The maps $\eta_X$ and $\eta_Y$ are levelwise $E$--equivalences, 
so they are also $E$--equivalences. Thus $f$ must also be a $E$--equivalence
by the two-out-of-three property.
\end{proof}

\begin{lemma}
Let $\C$ be an $E$--familiar and stable model category, and let $X$ be a cofibrant and fibrant object of $\C$. Then the Quillen pair
\[
X \wedge -: \mathcal{S} \,\,\raisebox{-0.1\height}{$\overrightarrow{\longleftarrow}$}\,\, \C: \Map(X,-)
\]
resulting from stable frames gives a Quillen pair 
\[
X \wedge -: \Sp(L_E \sset) \,\,\raisebox{-0.1\height}{$\overrightarrow{\longleftarrow}$}\,\, \C: \Map(X,-) .
\]
\end{lemma}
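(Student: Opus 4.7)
The plan is to verify that the right adjoint $\Map(X,-) \co \C \to \Sp(L_E \sset)$ is a right Quillen functor. The categories $\Sp(L_E \sset)$ and $\Sp$ share the same underlying category and the same cofibrations, and hence the same trivial fibrations; so the Quillen pair for $\Sp$ from Theorem \ref{thm:ssadjunct} already guarantees that $\Map(X,-)$ sends trivial fibrations of $\C$ to trivial fibrations of $\Sp(L_E \sset)$. By Corollary A.2 of \cite{Dug01}, applied just as in the proof of Theorem \ref{Efamiliarcharacterisation}, it remains only to check that $\Map(X,-)$ sends fibrations between fibrant objects of $\C$ to fibrations in $\Sp(L_E \sset)$.

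I would first verify that for any fibrant $Y \in \C$ the spectrum $\Map(X,Y)$ is a $U$--spectrum. Its level $n$ is $\C(X_n, Y)$, where $X_n$ is the $n$th cosimplicial frame of the stable frame on $X$. By $E$--familiarity of $\C$ and Theorem \ref{Efamiliarcharacterisation}, each $\C(X_n,-)$ is a right Quillen functor to $L_E \sset$; combined with Corollary \ref{cor:localmappingspace} this shows $\C(X_n, Y)$ is $E$--fibrant. Stability of $\C$ guarantees that $\Map(X, Y)$ is an $\Omega$--spectrum, so the structure maps are weak equivalences of simplicial sets. Together these two properties are exactly the condition that $\Map(X, Y)$ be a $U$--spectrum, i.e., fibrant in $\Sp(L_E \sset)$.

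Now let $p \co Y \to Z$ be a fibration between fibrant objects of $\C$. The map $\Map(X,p)$ is a fibration in $\Sp$ and, by the previous step, its source and target are $U$--spectra. Being a fibration in $\Sp$ it is, in particular, a level $\sset$--fibration. Each level is then a $\sset$--fibration between $E$--fibrant simplicial sets, and is therefore an $E$--fibration by the same fact cited in the proof of Theorem \ref{Efamiliarcharacterisation}. So $\Map(X,p)$ is a level $E$--fibration between $U$--spectra. In the Bousfield--Friedlander-type description of Hovey's stable model structure on $\Sp(L_E \sset)$, the remaining homotopy pullback condition comparing a level to the loops of the next level is automatic once both the source and the target are $U$--spectra, since the horizontal comparison maps are $L_E \sset$--weak equivalences by definition. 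Thus $\Map(X,p)$ is a stable fibration in $\Sp(L_E \sset)$, as required.

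The main obstacle is the final step: one has to extract from Hovey's construction of the stable model structure on $\Sp(L_E \sset)$ the characterization of fibrations between fibrant objects as level $E$--fibrations. Once this is in hand, the rest of the argument is a direct application of $E$--familiarity, stability, and the well-behaved mapping-space functors supplied by the cosimplicial frames $X_n$.
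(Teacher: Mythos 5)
Your proposal is correct and follows the paper's own proof essentially step for step: the reduction via Dugger's criterion to fibrations between fibrant objects, the verification that $\Map(X,Y)$ is a $U$--spectrum (levelwise $E$--fibrant via $E$--familiarity of the frames $X_n$, an $\Omega$--spectrum via the $\Sp$--Quillen pair), and the fact that a level fibration between fibrant objects of $\Sp(L_E \sset)$ is a fibration. The only cosmetic difference is that you upgrade the level $\sset$--fibrations to $E$--fibrations using the $E$--fibrancy of source and target, whereas the paper gets level $E$--fibrations directly from $E$--familiarity of each $\C(X_n,-)$; both routes are fine.
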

\begin{proof}
We show that $\Map(X,-)$ is a right Quillen functor from
$\C$ to $L_E \sset$. We know that it preserves trivial fibrations by adjunction as $\mathcal{S}$ and $\Sp(L_E \sset)$ have the same cofibrations. 
Thus we are left with showing that $\Map(X,-)$ preserves fibrations. We follow the proof
of \cite[Proposition 3.2]{Roi07}. Recall from 
\cite[Corollary 6.2]{Dug01} that it suffices to show that 
$\Map(X,-)$ takes fibrations between fibrant objects of $\C$
to fibrations of $\Sp(L_E \sset)$. 

\medskip
First we note that $\Map(X,-)$ takes fibrations of $\C$ to levelwise fibrations 
of $\Sp(L_E \sset)$: the $n^{th}$ level space of the spectrum $\Map(X,Y)$ is given by $\Map(X_n,Y)$, where $X_n \in \C^\Delta$ is the cosimplicial set representing the adjunction 
\[
X \wedge F_n(-): \sset \lradjunction \C: \Map(X,-)_n. 
\]
The model category $\C$ is $E$--familiar, so $$X_n \wedge -: \sset \lradjunction \C: \Map(X_n,-)$$ factors over $E$--local simplicial sets by assumption. In particular, $\Map(X_n,-)$ preserves fibrations. Hence $\Map(X,-)$ sends fibrations to level fibrations in $\Sp(L_E \sset)$. 

\medskip
Secondly, for fibrant $Y$, $\Map(X,Y)$ is 
an $\Omega$--spectrum, since $\Map(X,-)$ is a right Quillen functor
from $\C$ to $\Sp$. Thirdly a levelwise fibration between fibrant objects of 
$\Sp(L_E \sset)$ is a fibration. (For this statement, follow the proof of \cite[Proposition 3.2]{Roi07}, remembering that a $Q$--equivalence between $U$--spectra is a $\pi_*$--isomorphism.)

\medskip
Combining these three points we see that for fibrant $Y \in \C$, 
$\Map(X,Y)$ is a $U$--spectrum. So if $f \co Y \to Z$ 
is a fibration between fibrant objects of $\C$, 
then $\Map(X,f)$ is a levelwise fibration
between fibrant objects of $\Sp(L_E \sset)$ and the result follows.
\end{proof}

\begin{rmk}
With the same method as in the previous section, we could now also show the following: If $\C$ is an $E$--familiar stable model category then 
$\Ho(\C)$ is a closed $\Ho(\Sp(L_E \sset))$--module category. However, since not much is known about
the category  $\Ho(\Sp(L_E \sset))$, we would rather concentrate on investigating the case of $\Ho(L_E \Sp)$--module categories. 
\end{rmk}

We now give an example where an $E$--familiar and stable model category $\C$
is stably $E$--familiar. 

\begin{lemma}\label{lem:HRfamiliarity}
Let $R$ be a subring of the rationals. Then 
the stable model category of spectra in $\h R$--local
simplicial sets, $\Sp(L_{\h R} \sset)$, 
is the same as the stable model category of $\h R$--local spectra, $L_{\h R} \Sp$. 
\end{lemma}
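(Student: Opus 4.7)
The plan is to prove the two model structures coincide by showing they share the same cofibrations and the same weak equivalences; the equality of the trivial cofibrations, fibrations, and trivial fibrations will then follow from the standard lifting criteria. The cofibrations agree by definition (both are the cofibrations of $\Sp$), and the previous lemma already gives that every $Q$--equivalence is an $\h R$--equivalence, so the real content is showing the converse: that every $\h R$--equivalence is a $Q$--equivalence.

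My approach is to route this through a comparison of fibrant objects. Writing $R = \mathbb{Z}_{(P)}$ for $P$ the set of primes not inverted in $R$, the fibrant objects of $\Sp(L_{\h R} \sset)$ are by construction the $U$--spectra, while by Example \ref{ex:plocalspectra} the fibrant objects of $L_{\h R} \Sp$ are the $\Omega$--spectra with $R$--local homotopy groups. I would show that any $U$--spectrum $X$ is $\h R$--local. Each level $X_n$ is a loop space since $X$ is an $\Omega$--spectrum, and loop spaces are simple (the $H$--space structure forces the $\pi_1$--action to be trivial), hence nilpotent componentwise. Since $X_n$ is $\h R$--local by assumption, Bousfield's theorem as cited in Example \ref{ex:plocalspectra} forces $\pi_*(X_n)$ to be $R$--local. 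The groups $\pi_k(X)$ are filtered colimits of these, and $R$--locality is preserved under filtered colimits of abelian groups, so $\pi_*(X)$ is $R$--local and $X$ is $\h R$--local.

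With this comparison in place, suppose $f \co X \to Y$ is an $\h R$--equivalence and let $Q$ denote the fibrant replacement functor in $\Sp(L_{\h R} \sset)$. The maps $X \to QX$ and $Y \to QY$ are $Q$--equivalences and therefore, by the previous lemma, $\h R$--equivalences, so two-out-of-three shows that $Qf$ is an $\h R$--equivalence as well. Since $QX$ and $QY$ are $U$--spectra, the previous paragraph makes them $\h R$--local, so $Qf$ is an $\h R$--equivalence between $\h R$--local $\Omega$--spectra; this forces it to be a $\pi_*$--isomorphism, and a $\pi_*$--isomorphism of $\Omega$--spectra is a level weak equivalence. Unwinding the definition, this says that $f$ is a $Q$--equivalence.

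The main obstacle I anticipate is verifying that Bousfield's theorem, typically phrased for nilpotent simplicial sets, really does apply to the levels of an $\Omega$--spectrum, which need not be connected and whose negative--indexed levels carry nontrivial $\pi_0$. However, each path component of a loop space is simple and the components are mutually homotopy equivalent, so the nilpotence hypothesis is straightforward to verify componentwise; once this is granted, the rest of the argument is bookkeeping with filtered colimits and the two-out-of-three axiom.
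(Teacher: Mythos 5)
Your proof is correct, and its essential technical input is the same as the paper's: the identification of $U$--spectra with $\h R$--local $\Omega$--spectra via Bousfield's characterisation of $\h R$--local nilpotent spaces. The organisation differs, though. The paper shows the two model structures have the same cofibrations and the same fibrant objects (proving \emph{both} containments of fibrant objects) and then concludes that the weak equivalences, and hence the fibrations, coincide; this last step silently relies on the principle that a model structure is determined by its cofibrations and fibrant objects. You instead compare the classes of weak equivalences directly: you need only the single containment ``every $U$--spectrum is $\h R$--local'', and you deduce that every $\h R$--equivalence is a $Q$--equivalence by fibrant replacement and two--out--of--three, having imported the reverse containment from the preceding Quillen--pair lemma. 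This makes your route marginally more self-contained, using only the standard facts that fibrant replacement detects weak equivalences and that fibrations are determined by lifting against trivial cofibrations. You also give a cleaner justification than the paper's terse ``for large $n$, the $n^{\text{th}}$ homotopy group of an $\h R$--local space is $R$--local'': observing that each level of an $\Omega$--spectrum is a loop space, hence simple componentwise, is exactly what is needed to invoke Bousfield's Theorem 5.5, and your remark about handling the components of possibly disconnected levels is the right point to flag. Both arguments are sound; yours fills in slightly more detail at the one place where the paper is brief.
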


\begin{proof}
We know that these categories have the same cofibrations
and that in each case a weak equivalence between fibrant objects is 
a $\pi_*$--isomorphism of spectra in simplicial sets.
If we can show that they have the same fibrant objects, then it follows
that the weak equivalences are the same. 

By \cite[Lemma 4.1]{SchShi02} a fibrant object of 
$L_{\h R} \Sp$ is an $\Omega$--spectrum
whose homotopy groups are $R$--local. The fibrant objects of 
$\Sp(L_{\h R} \sset)$ are the $U$--spectra, i.e. $\Omega$--spectra
where every level is a $\h R$--fibrant simplicial set. 
If a spectrum $X$ is fibrant in $L_{\h R}\Sp$ 
then each space must be $\h R$--local, hence $X$
is also a $U$--spectrum. 
Conversely, for large $n$, the $n^{th}$ homotopy group of an $\h R$--local space 
is $R$--local. Hence the homotopy groups of a $U$--spectrum 
are $R$--local. Thus any $U$--spectrum
is fibrant in $L_{\h R}\Sp$.
\end{proof}

\begin{corollary}
A model category $\C$ is stably $\h R$--familiar if and only if
it is $\h R$--familiar and stable. 
\end{corollary}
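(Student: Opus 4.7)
The forward direction is already in hand: any stably $\h R$--familiar model category is $\h R$--familiar by Lemma \ref{lem:evaluation}, and stability is part of the definition of stably $E$--familiar. So the whole content is in the reverse implication.

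Suppose $\C$ is $\h R$--familiar and stable. The plan is to invoke the Quillen pair constructed in the penultimate lemma of this section: for any cofibrant and fibrant $X \in \C$, the stable frame gives a Quillen pair
\[
X \wedge - : \Sp(L_{\h R} \sset) \,\,\raisebox{-0.1\height}{$\overrightarrow{\longleftarrow}$}\,\, \C : \Map(X,-).
\]
By Lemma \ref{lem:HRfamiliarity}, the model categories $\Sp(L_{\h R} \sset)$ and $L_{\h R} \Sp$ coincide, so this Quillen pair is literally a Quillen pair
\[
X \wedge - : L_{\h R} \Sp \,\,\raisebox{-0.1\height}{$\overrightarrow{\longleftarrow}$}\,\, \C : \Map(X,-),
\]
which is exactly the statement that the stable frame on $X$ is an $\h R$--local stable frame. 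Since every stable frame arises (up to the uniqueness statement of \cite[Proposition 4.7]{Len11}) from a cofibrant--fibrant object, this shows that every stable frame is $\h R$--local, i.e.\ $\C$ is stably $\h R$--familiar.

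Alternatively, and perhaps more transparently, one can route the proof through the mapping spectrum criterion of Theorem \ref{localmappingspectrum}. Given $X, Y \in \C$, the $n$th level of the homotopy mapping spectrum $R\Map(X,Y)$ is the mapping space $R\map(X_n,Y)$, which is $\h R$--local by $\h R$--familiarity combined with Theorem \ref{Efamiliarcharacterisation}. Since $\Map(X,-)$ is a right Quillen functor into $\Sp$, the spectrum $R\Map(X,Y)$ is an $\Omega$--spectrum. Hence $R\Map(X,Y)$ is a $U$--spectrum, and by Lemma \ref{lem:HRfamiliarity} such $U$--spectra are precisely the fibrant objects of $L_{\h R}\Sp$, so $R\Map(X,Y)$ is $\h R$--local. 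Theorem \ref{localmappingspectrum} then gives the conclusion.

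The only point requiring any care is the passage from $\h R$--local level spaces plus the $\Omega$--spectrum condition to $\h R$--local spectrum, which fails for a general homology theory $E$ (as noted in Section \ref{sec:versus}, e.g.\ for $E = \h \mathbb{Z}/p$ where colimits of $p$--complete groups need not be $p$--complete). This is exactly the point where Lemma \ref{lem:HRfamiliarity} is indispensable, and it is only available because $R \subset \mathbb{Q}$ makes $R$--localness of homotopy groups equivalent to level-wise $\h R$--localness up to the $\Omega$--spectrum condition. So the corollary is really a direct translation of Lemma \ref{lem:HRfamiliarity} through the machinery already developed.
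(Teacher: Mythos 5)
Your proof is correct and takes the same route as the paper, which states this corollary without proof precisely because it is the immediate combination of the two preceding lemmas — your first argument. Your alternative route through Theorem \ref{localmappingspectrum} is really the same content unpacked, since the proof of the penultimate lemma already proceeds by showing that $\Map(X,Y)$ is a $U$--spectrum, and Lemma \ref{lem:HRfamiliarity} identifies $U$--spectra with the fibrant objects of $L_{\h R}\Sp$.
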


\section{Examples and Applications}\label{sec:applications}

We dedicate the final section of this paper to examples and applications of the technical work done in the previous sections. We will see how we can use $E$--local stable framings in the context of rigidity in the sense of \cite{Sch07} and \cite{Roi07}. Then another application will study stably $E$--familiar model categories in terms of an action of the stable homotopy groups of the $E$--local spheres. Finally, we can use all of this to classify algebraic $E$--familiar model categories. 
Let us start with some immediate consequences.

\bigskip
Some homology theories that are of crucial importance to stable homotopy theory are the chromatic Johnson--Wilson theories $E(n)$ with
\[
E(n)_* \cong \mathbb{Z}_{(p)}[v_1, v_2, ... , v_n, v_n^{-1}], \,\,\, |v_i|=2p^i-2
\]
as well as the Morava $K$--theories $K(n)$ with
\[
K(n)_* \cong \mathbb{Z}/p [v_n, v_n^{-1}].
\]
Note that the prime $p$ is absent from notation, and by convention, $E(0)=K(0)=\h\mathbb{Q}$. These homology theories and their Bousfield localisations provide important structural information about the stable homotopy category. For example, they are linked with periodicity and nilpotency phenomena. Also, the ``chromatic convergence'' theorem says that for a fixed prime $p$, $\Ho(L_{E(n)}\Sp)$ gives a better and better approximation of the stable homotopy category as $n$ increases. The ``thick subcategory theorem'' says that the $\Ho(L_{K(n)}\Sp)$ are the ``atomic'' localisations of the stable homotopy category. Finally, there is the chromatic pullback square linking the $E(n)$ with the $K(n)$. Details can be found in \cite{Rav92}. It is worth noting that $E(1)$ is the Adams summand of $p$--local complex $K$--theory, so localising with respect to $E(1)$ agrees with $p$--local $K$--localisation.

\bigskip
As there are plenty of known results about the relations between the $E(n)$ and $K(n)$ (see \cite{Rav84}), we can easily draw some first conclusions. For example, for a spectrum $X$ one has
\[
L_{K(n)}L_{E(n-1)}X \simeq *.
\]
Thus, a stably $E(n-1)$--familiar model category cannot be stably $K(n)$--familiar. One can also see that ``stably $E(n-1)$--familiar'' also implies ``stably $E(n)$--familiar''. Also, any stably $K(n)$--familiar model category is also stably $E(n)$--familiar. 

\subsection{Rigidity questions}\label{rigidity}

In recent years, Schwede showed that the stable homotopy category is homotopically determined by its triangulated structure only -- every stable model category $\C$ with $\Ho(\C)$ triangulated equivalent to $\Ho(\Sp)$ is automatically Quillen equivalent to $\Sp$, see \cite{Sch07}. Of course, this started the question of which other stable model categories are ``rigid'' in this sense.

\bigskip
{\bf Rigidity question} Let $\C$ be a stable model category. Assuming that there is an equivalence of triangulated categories
\[
\Phi: \Ho(L_E \Sp) \stackrel{\sim}{\longrightarrow} \Ho(\C),
\]
are $L_E \Sp$ and $\C$ Quillen equivalent?

\bigskip
To gain knowledge about the deeper structure of the stable homotopy category, the second author started considering the rigidity of chromatic Bousfield localisations of the stable homotopy category. For $p=2$, the result was that the $E(1)$--local stable homotopy category is rigid \cite{Roi07}. On the other hand, Franke showed in \cite{Fra96} that for $p >2$, the $E(1)$--local stable homotopy category possesses at least one ``exotic model''. Although the statements of the results in \cite{Sch07} and \cite{Roi07} look similar, the computational methods employed are quite different. We are going to see how these results and some elements of their proofs fit into the framework of stable $E$--familiarity. We restrict ourselves to the case of smashing localisations $L_E$ to make sure that the $E$--local sphere is a compact generator \cite[Theorem 3.5.2]{HovPalStr97}.

\bigskip
So what obvious obstructions are there for a model category $\C$ to be Quillen equivalent to some $L_E \Sp$? For example, Lemma \ref{lem:Quilleninvariance} tells us that $\C$ has to be stably $E$--familiar. In the case of Schwede's proof for $E=\mathbb{S}$, this condition is trivial, but for other $E$ this becomes a highly complicated computation. The second author could attempt this for $E(1)$ as in this range the telescope conjecture holds, giving a computable criterion for when a spectrum is $E(1)$--local. 
The core computation (using specific relations in $\pi_*(L_{K_{(2)}}\mathbb{S}$) ) was to show that every mapping spectrum $\Map(X,Y)$ is $K_{(2)}$--local \cite[Lemma 3.3]{Roi07}. In our words, this showed that $\C$ is stably $K_{(2)}$--familiar. So in the case of $E=K_{(2)}$, being stably $E$--familiar actually only depended on the triangulated structure of $\Ho(\C)$, which cannot be expected in the general case.

\bigskip
Given a stable model category $\C$, the first big step towards a Quillen equivalence with $L_E \Sp$ is the construction of a Quillen functor.  We can specify what this functor has to be when localising at $E$ is smashing. 

\begin{lemma}\label{lem:uniquequillen}
The following are equivalent when localisation at $E$ is smashing.
\begin{itemize}
\item There is a Quillen equivalence $F: L_E \Sp \lradjunction \C: G$.
\item $\omega X \sframe - : L_E \Sp \lradjunction \C: \Map(\omega X,-)$ is a Quillen equivalence for $X$  a fibrant--cofibrant replacement of $F(L_E \mathbb{S})$ and $\omega X$ a stable frame on $X$.
\end{itemize}
\end{lemma}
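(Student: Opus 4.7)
The implication from the second condition to the first is immediate, since by hypothesis $\omega X \sframe - \dashv \Map(\omega X, -)$ is a Quillen equivalence between $L_E \Sp$ and $\C$.

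For the converse, suppose $F : L_E \Sp \lradjunction \C : G$ is a Quillen equivalence. First, by Lemma \ref{lem:Quilleninvariance} the category $\C$ is stably $E$-familiar, so $\omega X$ is automatically an $E$-local stable frame, and
\[
\omega X \sframe - : L_E \Sp \lradjunction \C : \Map(\omega X, -)
\]
is therefore a Quillen adjunction. The remaining task is to show that it is a Quillen equivalence.

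The strategy is to show that the derived left adjoints $LF$ and $L(\omega X \sframe -)$ are naturally isomorphic as functors $\Ho(L_E \Sp) \to \Ho(\C)$, from which it follows that $\omega X \sframe -$ is a Quillen equivalence since $F$ is. To produce this natural isomorphism, I would invoke the $E$-local analogue of Lenhardt's classification of Quillen adjunctions $\Sp \lradjunction \C$ by $\Sigma$-cospectra (extended in Section \ref{sec:Efamiliarstable} to the $E$-local setting). Under this classification, the left Quillen functor $F$ corresponds to an $E$-local stable frame $Y$ on $\C$ with $Y_{0,0} \cong F(\mathbb{S})$. Since $\mathbb{S} \to L_E \mathbb{S}$ is an $E$-equivalence between cofibrant objects, $F(\mathbb{S})$ is weakly equivalent in $\C$ to $F(L_E \mathbb{S})$, hence to $X$. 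So $Y$ may be regarded as a stable frame on $X$; by the uniqueness of stable frames on a fibrant-cofibrant object (\cite[Proposition 4.7]{Len11}), $\omega X$ and $Y$ are weakly equivalent as cospectra, and via Lemma \ref{Ederivedfunctor} this weak equivalence descends to the desired natural isomorphism $LF \cong L(\omega X \sframe -)$ (together with the corresponding isomorphism of right adjoints).

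The main obstacle will be making precise the identification of $(F, G)$ with the Quillen adjunction built from $Y$, and then transferring $Y$ along the weak equivalence $F(\mathbb{S}) \simeq X$ in order to compare it with $\omega X$ via the uniqueness statement. The smashing hypothesis enters essentially here: it guarantees that $L_E \mathbb{S}$ is a compact generator of $\Ho(L_E \Sp)$, so that a left Quillen functor on $L_E \Sp$ is determined up to natural weak equivalence by its behaviour on this generator, which is precisely what allows the comparison via the frame at level $(0,0)$ to control the entire derived functor.
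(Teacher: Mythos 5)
Your proof is correct and follows essentially the same route as the paper's: the paper likewise deduces stable $E$--familiarity from Lemma \ref{lem:Quilleninvariance} and then concludes that $LF$ and $L(\omega X \sframe -)$ agree because they agree on the ($E$--local) sphere, citing Lemma \ref{Ederivedfunctor}; you have merely unpacked that last step via Lenhardt's classification and the uniqueness of stable frames. One small caveat: the determination of the derived functor by its value on the sphere comes from the stable-frames machinery rather than from compact generation of $\Ho(L_E\Sp)$ by $L_E\mathbb{S}$, so the smashing hypothesis is less essential to this particular lemma than your closing paragraph suggests (it is a standing assumption for the subsection, used in the later Morita-type arguments).
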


\begin{proof}
If $\C$ is Quillen equivalent to $L_E \Sp$, it is automatically stably $E$--familiar by Lemma \ref{lem:Quilleninvariance}. Since $F$ and $\omega X \wedge -$ agree on the $E$--local sphere, their derived functors agree by Lemma \ref{Ederivedfunctor}. Hence one is a Quillen equivalence if and only if the other is. 
\end{proof}

Using this, we see that if one has a triangulated equivalence between $\Ho(L_E \Sp)$ and $\Ho(\C)$, that comes from a Quillen functor, then it is determined by the image of the $E$--local sphere. 

\begin{corollary}
If the triangulated equivalence $$\Phi: \Ho(L_E \Sp) \longrightarrow \Ho(\C)$$ is realised by a Quillen functor, then it is realised uniquely up to natural transformations that are objectwise weak equivalences.
\end{corollary}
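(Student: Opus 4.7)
The plan is to exploit Lemma \ref{lem:uniquequillen}, which provides a canonical form for any Quillen equivalence realising $\Phi$, together with the uniqueness of stable frames up to levelwise weak equivalence. Suppose $F_1, F_2 \co L_E \Sp \to \C$ are two left Quillen functors whose derived functors both coincide with $\Phi$. Since $\Phi$ is a triangulated equivalence, a standard adjointness argument shows that each $F_i$ is in fact a Quillen equivalence, so Lemma \ref{lem:uniquequillen} applies to both. The goal is then to produce a zigzag of natural transformations between $F_1$ and $F_2$ whose components are weak equivalences on cofibrant $E$-local spectra.

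First I would let $A_i$ be a fibrant--cofibrant replacement of $F_i(L_E \mathbb{S})$ and observe that $A_1$ and $A_2$ both represent $\Phi(L_E \mathbb{S})$ in $\Ho(\C)$, hence are connected by a zigzag of weak equivalences in $\C$. Using the equivalence between adjunctions $\mathcal{S} \lradjunction \C$ and the category of $\Sigma$--cospectra in $\C^\Delta(\Sigma)$, each $F_i$ is encoded by a stable frame $\omega_i$ with $\omega_i \sframe L_E \mathbb{S} \simeq A_i$. Combining Lemma \ref{lem:uniquequillen} with Lemma \ref{Ederivedfunctor} and the uniqueness of stable frames on a fixed cofibrant--fibrant object (the content of \cite[Proposition 4.7]{Len11}), one obtains a zigzag of natural transformations between $F_i$ and $\omega A_i \sframe -$ whose components are weak equivalences on cofibrant objects.

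It remains to compare $\omega A_1 \sframe -$ and $\omega A_2 \sframe -$ along the zigzag of weak equivalences linking $A_1$ to $A_2$. At each intermediate object of this zigzag we choose a stable frame; adjacent frames on weakly equivalent base objects admit comparison maps of stable frames that agree on the sphere, and by Lemma \ref{Ederivedfunctor} together with the fact that $\omega X \sframe K$ preserves weak equivalences between cofibrant stable frames when $K$ is cofibrant, these comparison maps yield natural transformations that are weak equivalences on cofibrant spectra. Concatenating all three pieces of zigzag produces the required relation between $F_1$ and $F_2$.

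The main obstacle is bookkeeping: one must verify that the $E$-local uniqueness of stable frames extends cleanly from a fixed base object to weakly equivalent ones, and that each comparison of frames passes to an objectwise weak equivalence of the induced functors rather than merely to an isomorphism in $\Ho(\C)$. The stable $E$-familiarity of $\C$, which follows from the existence of a Quillen equivalence with $L_E \Sp$ via Lemma \ref{lem:Quilleninvariance}, ensures that every frame appearing in the zigzag automatically factors through $L_E \Sp$, so no additional $E$-local conditions need to be re-verified along the way.
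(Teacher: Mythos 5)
Your argument is correct and is exactly the route the paper intends: the corollary is stated with no written proof because it is meant to follow immediately from Lemma \ref{lem:uniquequillen} together with Lemma \ref{Ederivedfunctor}, namely that any Quillen equivalence realising $\Phi$ agrees, up to a zigzag of objectwise weak equivalences, with $\omega X \sframe -$ for $X$ a bifibrant model of $\Phi(L_E \mathbb{S})$. Your additional bookkeeping about comparing frames along the zigzag $A_1 \simeq A_2$ is handled by Lenhardt's uniqueness results exactly as you indicate, so there is nothing to add.
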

\qed

\begin{corollary}
The Quillen self-equivalences $L_E \Sp \longrightarrow L_E \Sp$ correspond to the Picard group 
$\Pic(L_E \Sp)$. 
\end{corollary}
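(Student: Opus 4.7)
The plan is to exhibit a group isomorphism between equivalence classes of Quillen self-equivalences of $L_E \Sp$ and $\Pic(L_E \Sp)$, where two Quillen self-equivalences are identified when their total left derived functors are naturally isomorphic (equivalently, when their left adjoints are linked by a zig-zag of natural weak equivalences). The group structure on the Quillen side is induced by composition, and on the Picard side by the derived smash product $\Esmash$.

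First, I would send a Quillen self-equivalence with left adjoint $F$ to the class $[LF(L_E \mathbb{S})] \in \Ho(L_E \Sp)$. By Lemma \ref{lem:uniquequillen}, the derived functor $LF$ is naturally isomorphic to $\omega X \Esframe -$ where $X$ is a fibrant--cofibrant replacement of $F(L_E \mathbb{S})$, and under the identification of Proposition \ref{stableagree} this agrees with $X \Esmash -$. Since $LF$ is an equivalence of triangulated categories and $L_E \mathbb{S}$ is the unit for $\Esmash$, the object $X$ must be invertible with respect to $\Esmash$, and hence defines an element of $\Pic(L_E \Sp)$.

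Conversely, given $[X] \in \Pic(L_E \Sp)$ with a fibrant--cofibrant representative $X$, the stable framing on $X$ provides a Quillen adjunction $\omega X \Esframe - : L_E \Sp \lradjunction L_E \Sp : \Map(\omega X,-)$ whose derived functor is $X \Esmash -$. This is an equivalence of $\Ho(L_E \Sp)$ precisely because $X$ is invertible, so we recover a Quillen self-equivalence sending $L_E \mathbb{S}$ to $X$. By the uniqueness clause of Lemma \ref{lem:uniquequillen}, these two assignments are mutually inverse modulo the chosen equivalence relation.

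Finally, to verify that the bijection is a group homomorphism, suppose that $F$ and $G$ are Quillen self-equivalences with $LF \simeq X \Esmash -$ and $LG \simeq Y \Esmash -$. Then $L(F \circ G) \simeq LF \circ LG \simeq (X \Esmash Y) \Esmash -$ by associativity of $\Esmash$, so $L(F \circ G)(L_E \mathbb{S}) \cong X \Esmash Y$. Composition on the Quillen side therefore corresponds to the smash product in $\Pic(L_E \Sp)$, completing the proof. The main subtlety is matching the framing-based module action against the intrinsic smash product on $L_E \Sp$, which is exactly the content of Proposition \ref{stableagree}; once that identification is in hand the rest of the argument is essentially formal and relies only on the fact that the unit corepresents the identity functor.
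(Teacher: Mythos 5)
Your proof is correct and follows essentially the same route as the paper: both use Lemma \ref{lem:uniquequillen} to see that every Quillen self-equivalence is, up to derived natural isomorphism, of the form $\omega X \sframe -$ for a fibrant--cofibrant $X$, identify this with the smash product $X \spectrasmash -$ via the uniqueness of stable frames, and observe that the result is an equivalence precisely when $X$ is invertible in $\Ho(L_E \Sp)$. Your explicit check that composition of Quillen equivalences corresponds to the smash product in $\Pic(L_E \Sp)$ is a welcome addition that the paper leaves implicit.
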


\begin{proof}
By Lemma \ref{lem:uniquequillen}, every Quillen equivalence is of the form $(\omega X \sframe -, \Map(\omega X,-))$ for $X \in L_E \Sp$ a fibrant and cofibrant spectrum. By the uniqueness of framings (Lemma \ref{Ederivedfunctor}), such an adjunction agrees with the Quillen pair $(X \spectrasmash-, \Map(X,-))$ from Example \ref{stableexample}. In the second pair, $\spectrasmash$ denotes the smash product of spectra. Hence, $(X \spectrasmash -, \Map(X,-))$ provides a Quillen equivalence if and only if $X \in \Pic(L_E \Sp)$. 
\end{proof}

With the results of Section \ref{sec:Efamiliarstable}, we can reduce the question of whether a functor is a Quillen equivalence to studying a mapping spectrum, a technique related to Morita theory.

\begin{proposition}
A stable model category $\C$ and $L_E \Sp$ are Quillen equivalent if and only if the map
\[
L_E \mathbb{S} \longrightarrow R\Map(X,X)
\]
is a $\pi_*$--isomorphism for $X$ a fibrant--cofibrant compact generator of $\C$.
\end{proposition}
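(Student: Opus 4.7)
The plan is to analyze the canonical Quillen pair
\[
\omega X \sframe - : L_E \Sp \lradjunction \C : R\Map(X,-)
\]
attached to any fibrant--cofibrant compact generator $X$ of $\C$. For the forward direction, suppose there is a Quillen equivalence between $L_E \Sp$ and $\C$. By Lemma \ref{lem:Quilleninvariance} $\C$ is stably $E$--familiar, and by Lemma \ref{lem:uniquequillen} the equivalence is realised, up to natural weak equivalence, by $\omega X \sframe - \dashv R\Map(X,-)$, where $X$ is a fibrant--cofibrant replacement of $F(L_E \mathbb{S})$. The derived unit at $L_E \mathbb{S}$ is the map $L_E \mathbb{S} \to R\Map(X, \omega X \Esframe L_E \mathbb{S}) \simeq R\Map(X,X)$, which is a $\pi_*$--isomorphism because the adjunction is a Quillen equivalence.

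For the converse, the first step is to verify that $\C$ is stably $E$--familiar. By hypothesis, $R\Map(X,X)$ is $\pi_*$--isomorphic to $L_E \mathbb{S}$ and is therefore $E$--local. For any $Y \in \C$ the spectrum $R\Map(X,Y)$ is a module in $\Ho(\Sp)$ over the ring spectrum $R\Map(X,X)$, hence $E$--local by \cite[Proposition 1.17]{Rav84}. Since $X$ is a compact generator, every object of $\Ho(\C)$ lies in the localising subcategory it generates, so for arbitrary $Y, Z \in \C$ the spectrum $R\Map(Y,Z)$ can be built from shifts of $R\Map(X,Z)$ by iterated homotopy limits (fibres and products). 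As $E$--local spectra are closed under homotopy limits, all mapping spectra of $\C$ are $E$--local, and Theorem \ref{localmappingspectrum} yields that $\C$ is stably $E$--familiar. In particular, $\omega X \sframe - \dashv R\Map(X,-)$ is a genuine Quillen adjunction between $L_E \Sp$ and $\C$.

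It remains to check that this adjunction is a Quillen equivalence. On $L_E \mathbb{S}$ the derived unit is precisely the hypothesised $\pi_*$--isomorphism $L_E \mathbb{S} \to R\Map(X,X)$. The class of $Z \in \Ho(L_E \Sp)$ for which the derived unit is an isomorphism is closed under coproducts and triangles, using that $X$ is compact so that $R\Map(X,-)$ preserves coproducts, and it contains the compact generator $L_E \mathbb{S}$; hence it is all of $\Ho(L_E \Sp)$. The derived counit is handled dually, using that $X$ is a compact generator of $\Ho(\C)$.

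The main obstacle is the propagation step in the converse direction: extracting stable $E$--familiarity from the $E$--locality of the single spectrum $R\Map(X,X)$. This relies on both the smashing hypothesis, so that $E$--locality of a spectrum is detected by being a module over the ring spectrum $L_E \mathbb{S}$ in $\Ho(\Sp)$, and on the compact generator hypothesis on $X$, which allows one to reduce arbitrary mapping spectra to $R\Map(X,-)$ via homotopy limits. Once stable $E$--familiarity is in hand, the Quillen equivalence check is a straightforward Schwede--Shipley-style cell-induction starting at the unit of $L_E \Sp$.
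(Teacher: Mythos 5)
Your proof is correct and follows essentially the same route as the paper: the standard localising-subcategory argument showing the derived unit is an isomorphism on all of $\Ho(L_E \Sp)$ once it is one on the compact generator $L_E \mathbb{S}$, with the counit handled by the fact that $R\Map(X,-)$ reflects isomorphisms since $X$ generates. The one place you go beyond the paper is in explicitly verifying, in the converse direction, that the hypothesis forces $\C$ to be stably $E$--familiar (via the $R\Map(X,X)$--module structure on $R\Map(X,Y)$ and propagation through the localising subcategory generated by $X$, using the smashing hypothesis) so that $(\omega X \sframe -, \Map(X,-))$ really is a Quillen adjunction out of $L_E \Sp$; the paper leaves this point implicit, relying on the surrounding discussion and Corollary \ref{cor:localringspectrum}, so your added step is a genuine improvement in completeness rather than a divergence in method.
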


\begin{proof}
The ``only if'' direction is immediate. 
We note that as $X$ is a compact generator of $\C$, the functor $R\Map(X,-)$ reflects isomorphisms. Then $X \spectrasmash^L_E -$ and $R\Map(X,-)$ are inverse equivalences of categories if and only if the map
\[
Y \longrightarrow R\Map(X,X \spectrasmash^L Y)
\]
is an isomorphism in $\Ho(L_E \Sp)$ for all $Y$. Now for a compactly generated triangulated category, checking this is a standard argument \cite[Theorem 4.2]{Roi07}. The full subcategory of those $Y$ such that the above map is a weak equivalence is closed under exact triangles and coproducts. It contains the sphere by assumption. But any full subcategory of $\Ho(L_E \Sp)$, that contains the sphere and is closed under coproducts and exact triangles, is $\Ho(L_E \Sp)$ itself. This means that the above map is an isomorphism for all $Y$, which is what we wanted to prove.

\end{proof}

By adjunction, the above condition is equivalent to showing that 
\begin{equation}\label{exactequation}
X \sframe^L-: [\mathbb{S}, \mathbb{S}]^{L_E \Sp} \longrightarrow [X,X]^{\C}
\end{equation}
is an isomorphism, cf. Proposition \ref{schshieq} in the next subsection. Both Schwede in the case of $E=\mathbb{S}$, and the second author for $E=E(1)$, proved this by exploiting the relations in $\pi_*(L_E \mathbb{S})$. Using induction, Schwede reduces the question to elements in $\pi_*(\mathbb{S})$ that have Adams filtration one \cite{Sch01}. For odd primes, this is just $\alpha_1 \in \pi_{2p-3}(\mathbb{S})$. Schwede shows that $X \spectrasmash^L\alpha_1 \neq 0$ using extended powers of the mod--$p$ Moore spectrum. 

For $p=2$, the elements of Adams filtration 1 are the Hopf maps $\eta$, $\nu$ and $\sigma$. As $\nu$ and $\sigma$ can be constructed from $\eta$ using Toda bracket relations (which are preserved under exact functors), it can be reduced further to studying $X \spectrasmash^L \eta$ only. Multiplication by 2 on the mod--$2$ Moore spectrum $M$ is not only nonzero but also factors over $\eta$. Since this information is also preserved by exact functors, it can be deduced that
\[
X \spectrasmash^L \eta \neq 0 \,\,\,\mbox{in}\,\,\, [X,X]^{\C} \cong \mathbb{Z}/2
\]
for any $\C$ with $\Ho(\C) \simeq \Ho(\Sp_{(2)})$. Together with the inductive argument this shows that (\ref{exactequation}) is an isomorphism.

\bigskip
For $E=E(1)$ and $p=2$, the question is also reduced to the behaviour of $\eta \in \pi_*(L_{E(1)}\mathbb{S})$. But rather than using Adams filtration, the reduction exploits $v_1$--periodicity in the mod--$2$ homotopy groups. 

For $p>2$ the question is again reduced to $\alpha_1$. But since for odd primes $\pi_*(L_{E(1)}\mathbb{S})$ is not equipped with the same density of relations as for $p=2$, $X \spectrasmash^L \alpha_1 =0$ is possible, allowing space for exotic models \cite[Theorem 6.8]{Roi07}.

\bigskip
To summarize: when $L_E$ is smashing, $\C$ is Quillen equivalent to $\Ho(L_E \Sp)$ if and only if
\begin{itemize}
\item $\C$ is stably $E$--familiar and
\item $\mathbb{S} \longrightarrow R\Map(X,X)$ is an $E$--equivalence.
\end{itemize}
If these properties only depend on the triangulated structure of $\Ho(\C) \simeq \Ho(L_E \Sp)$, then $\Ho(L_E \Sp)$ is rigid.

\subsection{Modular Rigidity}

We can also use stable frames to look at the rigidity question from a different angle. How much homotopical information can be seen by the $\Ho(\Sp)$--module structure coming from stable frames? The answer is that for $E$--local spectra, the module structure encodes all relevant information. One could say that ``$\Ho(L_E \Sp)$ is rigid as a $\Ho(\Sp)$--module category''. 

\begin{theorem}\label{thm:modrigid}
Let $L_E$ be a smashing localisation and
\[
\Phi: \Ho(L_E \Sp) \longrightarrow \Ho(\C)
\]
be an equivalence of triangulated categories. Then the following are equivalent.
\begin{itemize}
\item $\Phi$ is the derived functor of a Quillen equivalence.
\item $\Phi$ is a $\Ho(\Sp)$--module functor.
\end{itemize}
\end{theorem}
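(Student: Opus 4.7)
The forward direction follows from \cite[Theorem 7.3]{Len11}: the derived functor of any left Quillen functor between stable model categories is automatically a $\Ho(\Sp)$--module functor with respect to stable framings. So if $\Phi$ is the derived functor of a Quillen equivalence, it is in particular a $\Ho(\Sp)$--module functor.

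For the substantive direction, assume $\Phi$ is a $\Ho(\Sp)$--module equivalence. My strategy is to build a Quillen equivalence whose derived functor coincides with $\Phi$, in three steps: first show $\C$ is stably $E$--familiar, then construct a Quillen adjunction from $L_E \Sp$ to $\C$, and finally identify its derived functor with $\Phi$.

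For step one, I observe that since $L_E$ is smashing, any $Y \in \Ho(L_E \Sp)$ satisfies $Y \simeq Y \spectrasmash L_E \mathbb{S}$, hence $Y \spectrasmash K \simeq Y \spectrasmash L_E K$ for every $K \in \Ho(\Sp)$. Consequently the $\Ho(\Sp)$--action on $\Ho(L_E \Sp)$ factors through $\Ho(L_E \Sp)$ acting on itself. Transporting this factorisation along $\Phi$ and invoking the module hypothesis, the framing $\Ho(\Sp)$--action on $\Ho(\C)$ factors through a $\Ho(L_E \Sp)$--action, and Theorem \ref{thm:EfamiliarEmodule} then forces $\C$ to be stably $E$--familiar. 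For step two, let $X$ be a fibrant--cofibrant replacement of $\Phi(L_E \mathbb{S})$ in $\C$. The Universal Property of Spectra (Theorem \ref{thm:ssadjunct}) supplies a Quillen adjunction $X \sframe -: \Sp \lradjunction \C: \Map(X,-)$, and Theorem \ref{localmappingspectrum} ensures $\Map(X,-)$ lands in $E$--local spectra, so the pair descends to a Quillen adjunction
\[
X \sframe -: L_E \Sp \lradjunction \C: \Map(X,-).
\]

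For step three, both $L(X \sframe -)$ and $\Phi$ are exact $\Ho(L_E \Sp)$--module functors which, by construction of $X$, agree on the unit $L_E \mathbb{S}$. Since $L_E \mathbb{S}$ is a compact generator of $\Ho(L_E \Sp)$ under the smashing hypothesis, the thick subcategory argument sketched in Subsection \ref{rigidity} produces a natural isomorphism $L(X \sframe -) \simeq \Phi$. Because $\Phi$ is a triangulated equivalence, so is $L(X \sframe -)$, and therefore the Quillen pair is a Quillen equivalence, as required. The main obstacle I expect lies in step one: translating the abstract module--functor hypothesis into a genuine factorisation satisfying the associativity and unit coherences required to invoke Theorem \ref{thm:EfamiliarEmodule}. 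This reduces to a coherence chase that uses smashingness of $L_E$ to identify the transported $\Ho(L_E \Sp)$--structure on $\Ho(\C)$ with the framing action up to coherent isomorphism.
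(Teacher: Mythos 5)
Your proof is correct in outline but reaches the key intermediate conclusion --- that $\C$ is stably $E$--familiar --- by a genuinely different route from the paper. The paper invokes \cite[Theorem 3.1.1]{SchShi03} to replace $\C$ by a category of $R$--module spectra, then uses \cite[Proposition VIII.3.2]{EKMM} to identify $\Ho(L_E R\mbox{--mod})$ with $\Ho(R\mbox{--mod})[E^{-1}]$, deducing that $\C$ is Quillen equivalent to the $L_E \Sp$--model category $L_E R\mbox{--mod}$ and hence stably $E$--familiar. You instead observe that the $\Ho(\Sp)$--action on $\Ho(L_E \Sp)$ sends $E$--equivalences to isomorphisms (this part holds for any localisation, since $Y \spectrasmash f$ is an $E$--equivalence whenever $f$ is), transport this property across the module equivalence $\Phi$, and appeal to the characterisation in Theorem \ref{thm:EfamiliarEmodule}. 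Your route is more elementary --- it avoids Morita theory and the EKMM localisation of module categories entirely --- at the cost of leaning on the ``if'' direction of Theorem \ref{thm:EfamiliarEmodule} and the coherence chase you flag; the paper's route additionally produces an explicit zigzag of Quillen equivalences to a concrete model, which is of independent interest. Your final step is essentially the paper's, but one correction: once $X = \Phi(L_E \mathbb{S})$ is chosen, the identification $L(X \sframe -) \cong \Phi$ should not be attributed to a thick subcategory argument, since two exact functors agreeing on a compact generator need not be naturally isomorphic without a comparison transformation already in hand. The $\Ho(\Sp)$--module structure of $\Phi$ together with $L_E \mathbb{S} \spectrasmash^L K \cong K$ for local $K$ (here smashingness enters) supplies the natural isomorphism directly, via
\[
X \sframe^L_E K \cong \Phi(L_E \mathbb{S}) \sframe^L_E K \cong \Phi(L_E \mathbb{S} \spectrasmash^L K) \cong \Phi(K),
\]
exactly as in the paper's closing computation.
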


\begin{proof}
We know that Quillen functors induce $\Ho(\Sp)$--module functors. For the other implication, assume that we have an equivalence of triangulated categories
\[
\Phi: \Ho(L_E \Sp) \longrightarrow \Ho(\C).
\]
By \cite[Theorem 3.1.1]{SchShi03}, $\C$ is Quillen equivalent to a category of module spectra over a ring spectrum $R$. Its Bousfield localisation $L_E R$ is still a ring spectrum. Let us now consider the category of $L_E R$--module spectra. We arrive at the following situation.
\[
\xymatrix{ \Ho(L_E \Sp) \ar[rr]^{\Phi} \ar[rrd]_{\Phi'} & & \ar[d]^{L_E} \Ho(R\mbox{--mod}) \\
 & & \Ho(L_E R\mbox{--mod}) 
}
\]
By \cite[Proposition VIII.3.2]{EKMM},
\[
\Ho(L_E R\mbox{--mod}) \cong \Ho(R\mbox{--mod})[E^{-1}],
\]
i.e. $\Ho(L_E R\mbox{--mod})$ is $\Ho(R\mbox{--mod})$ with the $E_*$--isomorphisms formally inverted. Because $\Phi$ is assumed to be a $\Ho(\Sp)$--module functor, it sends $E_*$--isomorphisms to $E_*$--isomorphisms, as does its inverse $\Phi^{-1}$. It follows that $\Phi^{-1}$ must factor over $\Ho(L_E R\mbox{--mod})$, giving an inverse to $\Phi'$. So now we know that if $\Phi$ is a $\Ho(\Sp)$--module equivalence, then it also induces an equivalence
\[
\Ho(L_E \Sp)= \Ho(L_E \Sp)[E^{-1}] \longrightarrow \Ho(R\mbox{--mod})[E^{-1}].
\]

Consequently, $\Phi'$ is a triangulated equivalence, and so is
\[
L_E: \Ho(R\mbox{--mod}) \longrightarrow \Ho(L_E R\mbox{--mod}).
\]
So $\C$, $R\mbox{--mod}$ and $L_E R\mbox{--mod}$ are all Quillen equivalent. Since $L_E R$ is obviously $E$--local and $L_E$ is smashing, the category $L_E R\mbox{--mod}$ is an $L_E \Sp$--model category. Thus, $\C$ is stably $E$--familiar.

As $\C$ is stably $E$--familiar, we can now consider Quillen functors
\[
X \sframe -: L_E \Sp \longrightarrow \C \,\,\,\,\,\mbox{for}\,\, X \in \C.
\]
Take $X = \Phi(L_E \mathbb{S})$. Remember that $\Phi$ is a $\Ho(\Sp)$--module functor and that $X \sframe^L - = X \sframe^L_{E} -$. Then
\[
X \sframe^L_{E} - = \Phi(L_n \mathbb{S}) \sframe^L_{E} - = \Phi( L_n \mathbb{S} \spectrasmash^L - ) = \Phi(-).
\]
This means that $X \sframe -$ is a Quillen functor with left derived functor $\Phi$, which is what we wanted to prove.

\end{proof}

This means that for $L_E$ smashing, all higher homotopy information of $L_E \Sp$ is encoded in the stable frames.

\subsection{Linearity and Uniqueness}\label{sec:linearity}

A major application of framings is using them to define an action of the stable homotopy groups of spheres on the morphisms groups of the homotopy category of a stable model category. In \cite{SchShi02}, Schwede and Shipley define this $\pi_* (\mathbb{S})$--action and show how it can be used to examine whether a stable model category is Quillen equivalent to the category of spectra. 

We are going to use our work on $E$--local framings to investigate whether a stably $E$--familiar model category $\C$ is Quillen equivalent to the category of $E$--local spectra $L_E \mathcal{S}$,
in the case that localisation at $E$ is smashing. 
There, the action of $\pi_* \mathbb{S}$ passes through $\pi_*(L_E \mathbb{S})$, which is an advantage, as in many cases $\pi_* L_E \mathbb{S}$ is better understood, more computable and more highly structured than $\pi_* \mathbb{S}$.

\bigskip
First of all, let $R_*$ be a graded ring. We say that a triangulated category $\mathcal{T}$ is \textbf{$R_*$--linear} if $\mathcal{T}$ has an action of $R_*$ which is compatible with the triangulated structure, i.e. there are bilinear pairings
\[
R_n \sframe \mathcal{T}(X,Y) \longrightarrow \mathcal{T}(X[n],Y)
\]
for all $X,Y \in \mathcal{T}$ which are unital, associative, central and compatible with the shift in $\mathcal{T}$ \cite[Definition 2.2]{SchShi02}.

An \textbf{$R_*$--exact functor} is a functor of triangulated categories which is compatible with the $R_*$--action, see \cite[Definition 2.2]{SchShi02}.

\begin{ex}
Let $\mathcal{M}$ be a monoidal triangulated category with unit $I$ and $\mathcal{T}$ a module over this category. 
Then we see that the module action makes every group $\mathcal{T}(A,B)$ into a $\mathcal{M}(I,I)$--linear category. 

For $\mathcal{T}=\Ho(\C)$ and $\mathcal{M}=\Ho(\Sp)$, this recovers \cite[Construction 2.4]{SchShi02}. If $\C$ is a stably $E$--familiar model category, the above construction makes $\Ho(\C)$ into a $\pi_*(L_E \mathbb{S})$--linear category.
Furthermore, the ring $\pi_*(\mathbb{S})$ acts on $[X,Y]_*^{\C}$ via the localisation map
$\pi_*(\mathbb{S}) \to \pi_*(L_E \mathbb{S}).$
\end{ex}

\begin{rmk}
One might want to study stable model categories
whose homotopy categories are
$\pi_*(L_E \mathbb{S})$--triangulated, 
but this notion has a difficulty. Let $\C$ be such a model category, 
then one would want the action of 
$\pi_*(L_E \mathbb{S})$ on homotopy classes of maps (coming from the 
$\pi_*(L_E \mathbb{S})$--triangulation) to 
be related to the map 
$$[\mathbb{S}, L_E \mathbb{S}]^{\Sp}_* \times [X,Y]_*^{\C}
\to [X,Y \sframe^L L_E \mathbb{S}]_*^{\C}
$$
that comes from stable framings. The only way of achieving a 
suitably useful relation seems to be requiring that for any $Y$ the map 
$Y \to Y \sframe^L L_E \mathbb{S}$
is a weak equivalence. In the smashing case, which is the one of greatest interest, this is precisely the
condition that $\C$ be stably $E$--familiar. 
\end{rmk}

\begin{lemma}\label{lem:localmodfunctors}
A Quillen pair between stably $E$--familiar model categories induces 
an adjunction of closed $\Ho (L_E \Sp)$--modules
on homotopy categories. 
\end{lemma}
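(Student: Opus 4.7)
The plan is to leverage Lenhardt's result \cite[Theorem 7.3]{Len11}, which already establishes the analogous statement for the unlocalised module structure: any Quillen pair $F \co \C \lradjunction \mathcal{D} : G$ between stable model categories induces an adjunction of $\Ho(\mathcal{S})$-module categories between $\Ho(\C)$ and $\Ho(\mathcal{D})$. In particular, there is a natural isomorphism $LF(X \sframe^L K) \cong LF(X) \sframe^L K$ for $X \in \Ho(\C)$ and $K \in \Ho(\mathcal{S})$, satisfying the coherence conditions of a module functor, and the adjoint $RG$ inherits the structure of a (lax) module functor via the usual mate construction, which is strong since $LF$ is a strong module functor and $(LF, RG)$ is an equivalence on morphism sets.

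The task is therefore to promote this unlocalised adjunction to one of $\Ho(L_E \mathcal{S})$-modules. By Theorem \ref{thm:EfamiliarEmodule}, the $\Ho(\mathcal{S})$-action on $\Ho(\C)$ factors over $\Ho(L_E \mathcal{S})$, and likewise for $\Ho(\mathcal{D})$, because both $\C$ and $\mathcal{D}$ are stably $E$--familiar. Concretely, the derived functors $X \sframe^L - : \Ho(\mathcal{S}) \to \Ho(\C)$ invert $E$-equivalences, so they factor uniquely through the localisation $\Ho(\mathcal{S}) \to \Ho(L_E \mathcal{S})$ to give $X \sframe^L_E -$, and similarly for $\mathcal{D}$. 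Applying this factorisation to both sides of Lenhardt's natural isomorphism yields a natural isomorphism
\[
LF(X \sframe^L_E K) \cong LF(X) \sframe^L_E K
\]
for $X \in \Ho(\C)$ and $K \in \Ho(L_E \mathcal{S})$, well-defined because both sides depend only on the $E$--local homotopy type of $K$.

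The coherence axioms (associativity and unit) for $LF$ as a $\Ho(L_E \mathcal{S})$-module functor are immediate from the corresponding coherence of Lenhardt's unlocalised isomorphism, together with the fact that the associativity and unit isomorphisms of $\Ho(L_E \mathcal{S})$ as a module acting on $\Ho(\C)$ (and $\Ho(\mathcal{D})$) were constructed in Theorem \ref{thm:EfamiliarEmodule} precisely by factoring the unlocalised data through the localisation. The right adjoint $RG$ then acquires the structure of a $\Ho(L_E \mathcal{S})$-module functor by passing the mate natural transformation through the same localisation, with its coherence conditions inherited from those of $LF$ by the usual abstract nonsense for adjoint pairs of module functors. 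The only point requiring any care is checking that the localisation $\Ho(\mathcal{S}) \to \Ho(L_E \mathcal{S})$ is compatible with the monoidal data used in the module axioms, which reduces to the fact that the identity functor $\mathcal{S}^\Sigma \to L_E \mathcal{S}^\Sigma$ is a (lax) monoidal Quillen functor, so no genuinely new coherence needs to be checked.
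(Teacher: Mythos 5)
Your proposal is correct and follows essentially the same route as the paper: both arguments take Lenhardt's $\Ho(\Sp)$--module functor structure on $LF$ from \cite[Theorem 7.3]{Len11}, observe that the cofibrant replacements (hence the associativity isomorphism itself) can be taken to be literally the same in $\Sp$ and $L_E\Sp$, and then check that naturality descends to $\Ho(L_E\Sp)$ because the extra morphisms there are inverses of $E$--equivalences, with coherence inherited for free. The paper phrases the descent as an explicit naturality-square argument for $f^{-1}$ rather than via the universal property of localisation, but this is only a cosmetic difference.
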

\begin{proof}
Take a Quillen pair between stably $E$--familiar model categories 
$$
F : \mathcal{C} \lradjunction \mathcal{D} : G
$$
then the categories $\Ho (\mathcal{C})$ and $\Ho(\mathcal{D})$
are closed $\Ho (L_E \Sp)$--modules by Theorem \ref{thm:EfamiliarEmodule}. 
We want to show that the derived adjunction
$$
LF : \Ho(\mathcal{C}) \lradjunction \Ho(\mathcal{D}) : RG
$$
is an adjunction of closed $\Ho (L_E \Sp)$--modules. 
By \cite[Definition 4.1.14]{Hov99}, this amounts to showing that 
$LF$ is a $\Ho(L_E \Sp)$--module functor. 
So we need a natural isomorphism, in $\Ho(\mathcal{D})$
$$
m \co LF(X) \Esframe K \to LF(X \Esframe K)
$$
for any $X \in \Ho(C)$ and $K \in \Ho(L_E \Sp)$, 
which satisfies associativity and unital coherence conditions. 
By \cite[Theorem 7.3]{Len11}, the functor $LF$ is a 
$\Ho(\Sp)$--module functor. Let 
$$
m' \co LF(X) \sframe^L K \to LF(X \sframe^L K)
$$
be the associativity isomorphism of this structure. 
We can choose $m'$ to be equal to $m$, since the cofibrant replacement
functor of $L_E \Sp$ can be chosen to agree with that of $\Sp$. 
It remains to show that this $m$ is natural on the category
$\Ho(L_E \Sp)$. Take an $E$--equivalence
$f \co L \to K$, and consider the diagram 
$$
\xymatrix@C+1cm{
LF(X) \Esframe L 
\ar[r]^{1 \Esframe f} \ar[d]^{m'} & 
LF(X) \Esframe K
\ar[d]^{m'} \\
LF(X \Esframe L) 
\ar[r]^{LF(1 \Esframe f)} & 
LF(X \Esframe K).
}
$$
By naturality of $m'$ on $\Ho(\Sp)$ this diagram commutes.
The top and bottom horizontal maps are isomorphisms, so the analogous diagram
involving $f^{-1}$ commutes. This shows that $m=m'$ is a natural isomorphism
on $\Ho(L_E \Sp)$. 
The coherence conditions follow immediately. 
\end{proof}

\begin{corollary}\label{cor:linearityfactors}
A Quillen pair between stably $E$--familiar model categories is
$\pi_*(L_E \mathbb{S})$--linear.
\end{corollary}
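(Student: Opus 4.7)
The plan is to reduce this corollary to the previous lemma together with the general observation from the example immediately before it: for any closed module category over a monoidal triangulated category $\mathcal{M}$ with unit $I$, the action of the endomorphism ring $\mathcal{M}(I,I)_*$ on the morphism groups of the module is intrinsic to the module structure. Specialising to $\mathcal{M} = \Ho(L_E \Sp)$ with unit $L_E \mathbb{S}$, the $\pi_*(L_E \mathbb{S})$--action on $[X,Y]_*^{\mathcal{C}}$ for a stably $E$--familiar $\mathcal{C}$ is precisely the one induced by the closed $\Ho(L_E\Sp)$--module structure on $\Ho(\mathcal{C})$.

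So the first step is to invoke Lemma \ref{lem:localmodfunctors} to obtain that the derived adjunction $(LF, RG)$ is an adjunction of closed $\Ho(L_E \Sp)$--modules, equipped with a natural coherent isomorphism $m \co LF(X) \Esframe K \longrightarrow LF(X \Esframe K)$. Second, I would spell out how $\pi_*(L_E\mathbb{S})$--linearity of $LF$ is the instance of this structure obtained by taking $K = L_E\mathbb{S}[n]$: given $\alpha \in \pi_n(L_E \mathbb{S}) = [L_E \mathbb{S}[n], L_E \mathbb{S}]^{L_E\Sp}$ and $f \in [X,Y]^{\mathcal{C}}$, the module action produces $\alpha \cdot f \in [X[n], Y]^{\mathcal{C}}$ via $X[n] \simeq X \Esframe L_E\mathbb{S}[n] \xrightarrow{1 \Esframe \alpha} X \Esframe L_E\mathbb{S} \simeq X \xrightarrow{f} Y$, and the naturality plus unit coherence of $m$ translates directly into $LF(\alpha \cdot f) = \alpha \cdot LF(f)$.

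The analogous statement for $RG$ follows by adjunction, since in a closed module adjunction the right adjoint is automatically a lax module functor, and the induced action on morphism groups is preserved by any such functor (using the unit/counit of the derived adjunction). There is essentially no further computation: everything is formal once one has the module adjunction.

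The main (and really only) subtlety to watch is making sure the $\pi_*(L_E \mathbb{S})$--action referred to in the statement is genuinely the one coming from the $\Ho(L_E \Sp)$--module structure rather than an a priori different action coming from $\pi_*(\mathbb{S})$ via the localisation map. This was already addressed in the example preceding the corollary, which identifies the two actions as compatible, so the corollary is a direct formal consequence of Lemma \ref{lem:localmodfunctors} and that identification.
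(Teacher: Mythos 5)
Your proposal is correct and matches the paper's intent: the paper states this corollary with no written proof (just \qed), treating it as an immediate consequence of Lemma \ref{lem:localmodfunctors} together with the identification, in the preceding example, of the $\pi_*(L_E \mathbb{S})$--action on morphism groups with the action of the endomorphisms of the unit in the closed $\Ho(L_E \Sp)$--module structure. Your unpacking of how a module functor preserves that action (and how $RG$ inherits linearity by adjunction) is exactly the formal argument being left implicit.
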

\qed

We can now use this $\pi_*(L_E \mathbb{S})$--action to study whether a stably $E$--familiar stable model category $\C$ is Quillen equivalent to $L_E \mathcal{S}$. For this we need to restrict ourselves to smashing localisations in order to guarantee that the $E$--local sphere is still a small weak generator \cite[Theorem 3.5.2]{HovPalStr97}.

We can easily state an $E$--local version of \cite[Theorem 5.3]{SchShi02}. The proof is obviously going to be extremely similar to the original, so we omit it and refer to Schwede's and Shipley's version. The only difference being using the fact that in a stably $E$--familiar model category, mapping spectra are $E$--local. 

\begin{proposition}\label{schshieq}
Let $\C$ be a stably $E$--familiar model category. Then the following are equivalent.
\begin{enumerate}
\item There is a chain of Quillen equivalences between $\C$ and $L_E \mathcal{S}$.
\item There exists a $\pi_*(L_E \mathbb{S})$--linear equivalence between $\Ho(\C)$ and $\Ho(L_E \mathcal{S})$.
\item The model category $\C$ has a small weak generator $X$ for which $[X,X]^\C_*$ is freely generated as a $\pi_*(L_E \mathbb{S})$--module by the identity of $X$.
\item The homotopy category $\Ho(\C)$ has a cofibrant--fibrant small weak generator $X$ for which $L_E \mathbb{S} \longrightarrow \Map(X,X)$ is a weak equivalence of spectra.
\end{enumerate}
Furthermore if $X$ is a cofibrant and fibrant object of $\C$ which satisfies 
either of the last two conditions then the adjunction
$(X \sframe -, \Map(X,-)$ is a Quillen equivalence between $\C$ and
$L_E \Sp$. 
\end{proposition}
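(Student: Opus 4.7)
The plan is to follow the proof of \cite[Theorem 5.3]{SchShi02} essentially verbatim, substituting $L_E \Sp$ for $\Sp$ and $\pi_*(L_E \mathbb{S})$ for $\pi_*(\mathbb{S})$ throughout. The workhorse is the Quillen adjunction
\[
X \sframe - : L_E \Sp \lradjunction \C : \Map(X,-)
\]
obtained from a stable frame on any cofibrant--fibrant $X \in \C$; this lands in $L_E \Sp$ rather than $\Sp$ precisely because $\C$ is stably $E$--familiar, and consequently $\Map(X,-)$ takes values in $E$--local spectra by Theorem \ref{localmappingspectrum}.

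For $(1) \Rightarrow (2)$, a chain of Quillen equivalences between stably $E$--familiar model categories induces a $\Ho(L_E \Sp)$--module equivalence on homotopy categories by Lemma \ref{lem:localmodfunctors}, which is $\pi_*(L_E \mathbb{S})$--linear by Corollary \ref{cor:linearityfactors}. For $(2) \Rightarrow (3)$, one takes $X$ to be a cofibrant--fibrant replacement of the image of $L_E \mathbb{S}$; the linearity of the equivalence transports the fact that $[L_E \mathbb{S}, L_E \mathbb{S}]^{L_E \Sp}_* \cong \pi_*(L_E \mathbb{S})$ is freely generated by the identity, and preserves the small weak generator property. For $(3) \Rightarrow (4)$, since $\C$ is stably $E$--familiar the map $L_E \mathbb{S} \to \Map(X,X)$ sending $\alpha$ to $\alpha \cdot \id_X$ realises the $\pi_*(L_E \mathbb{S})$--module structure on $[X,X]^\C_* \cong \pi_* \Map(X,X)$, so by assumption it is a $\pi_*$--isomorphism; both source and target are $E$--local (the target by Theorem \ref{localmappingspectrum}), hence it is a weak equivalence in $L_E \Sp$.

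The substantive implication is $(4) \Rightarrow (1)$, which also establishes the ``furthermore'' clause. I will verify directly that $(X \sframe -, \Map(X,-))$ is a Quillen equivalence. For the derived unit $\eta_K \co K \to R\Map(X, X \Esframe K)$, the case $K = L_E \mathbb{S}$ is exactly the hypothesis in (4). The full subcategory of $\Ho(L_E \Sp)$ on which $\eta_K$ is an isomorphism is closed under exact triangles and arbitrary coproducts (using that $X$ is small, so $\Map(X,-)$ commutes with coproducts in $\Ho(\C)$), contains $L_E \mathbb{S}$, and therefore agrees with all of $\Ho(L_E \Sp)$ since the $E$--local sphere generates. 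For the right adjoint, $\Map(X,-)$ reflects weak equivalences between fibrant objects because $\pi_* \Map(X,Y) \cong [X,Y]^\C_*$ and $X$ is a weak generator of $\C$. By the standard characterisation of Quillen equivalences \cite[Corollary 1.3.16]{Hov99}, this completes the argument.

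The main technical care needed is tracking where $E$--locality enters: it is used to turn the $\pi_*$--isomorphism $L_E \mathbb{S} \to \Map(X,X)$ into a weak equivalence in $L_E \Sp$, and it underpins the identification $\pi_* \Map(X,Y) \cong [X,Y]^\C_*$ as a module over $\pi_*(L_E \mathbb{S})$ rather than merely over $\pi_*(\mathbb{S})$. Once Theorem \ref{localmappingspectrum} is invoked to guarantee that all relevant mapping spectra are $E$--local, the rest is a direct transcription of the Schwede--Shipley argument, which is why the authors choose to omit the details.
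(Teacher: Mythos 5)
Your proposal is correct and matches the paper's intended argument exactly: the paper itself omits the proof, stating only that it is the $E$--local transcription of \cite[Theorem 5.3]{SchShi02} with the sole new input being that all mapping spectra are $E$--local (Theorem \ref{localmappingspectrum}), which is precisely what you have written out. The one point worth making explicit is that your step $(4)\Rightarrow(1)$ needs $L_E$ to be smashing so that $L_E \mathbb{S}$ is a \emph{compact} generator of $\Ho(L_E \Sp)$ (otherwise the localising subcategory it generates need not be everything); the paper imposes this hypothesis in the paragraph preceding the proposition rather than in its statement.
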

\qed

\subsection{Algebraic model categories}
Another interesting class of model categories to consider is the class of \textbf{algebraic model categories}. An algebraic model category is a $\ch(\mathbb{Z})$--model category, where $\ch(\mathbb{Z})$ denotes the model category of chain complexes of abelian groups. Sometimes this is also called a \textbf{dg--model category}, cf. \cite{SchShi03}. We would like to investigate what stably $E$--familiar algebraic model categories look like. The mapping spectra of algebraic model categories carry some special structure: they are products of Eilenberg--MacLane spectra. Together with some knowledge of Bousfield localisations of Eilenberg--MacLane spectra we can draw some interesting conclusions. 

\begin{lemma}\label{lem:EMLmapping}
Let $\C$ be an algebraic model category. Then for each $X, Y \in \C$, the mapping spectrum $\Map(X,Y)$ is a product of Eilenberg--MacLane spectra.
\end{lemma}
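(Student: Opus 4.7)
The plan is to show that the mapping spectrum $\Map(X,Y)$ carries an $\h\mathbb{Z}$-module structure inherited from the $\ch(\mathbb{Z})$-enrichment on $\C$, and then invoke the classification of $\h\mathbb{Z}$-module spectra. First I would use the Quillen bifunctor
\[
- \tensor - : \C \times \ch(\mathbb{Z}) \longrightarrow \C
\]
composed with the normalised chain complex functor $\mathbb{Z}[-] : \sset \to \ch(\mathbb{Z})$. For cofibrant $X \in \C$, the cosimplicial object $n \mapsto X \tensor \mathbb{Z}[\Delta[n]]$ is a cosimplicial frame by Proposition \ref{framingcharacterisation}, since $\mathbb{Z}[-]$ preserves cofibrations and converts the structure maps $\Delta[n] \to \Delta[0]$ into quasi-isomorphisms. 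By the uniqueness of framings, this frame may be used in the construction of the mapping spectrum.

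Next I would identify the levels of the mapping space
\[
\map(X,Y)_n \cong \ch(\mathbb{Z})(\mathbb{Z}[\Delta[n]], \Hom_\C(X,Y)),
\]
where $\Hom_\C$ denotes the chain complex enrichment of $\C$. Since $\ch(\mathbb{Z})(-,-)$ takes values in abelian groups and the simplicial structure maps are homomorphisms, $\map(X,Y)$ is naturally a simplicial abelian group. The analogous argument at each spectrum level equips $\Map(X,Y)$ with the structure of a spectrum object in simplicial abelian groups, which (via Dold--Kan together with stabilisation) is precisely an $\h\mathbb{Z}$-module spectrum.

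Finally, $\mathbb{Z}$ has global dimension one, so every unbounded chain complex of $\mathbb{Z}$-modules is quasi-isomorphic to the direct sum of its shifted homology groups regarded as complexes with zero differential. Equivalently, every $\h\mathbb{Z}$-module spectrum is weakly equivalent to a product of shifted Eilenberg--MacLane spectra, which yields the claim.

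The main obstacle will be making the passage to spectra precise: one must ensure that the stable frame used to compute $\Map(X,Y)$ can be chosen so that each level of the underlying cospectrum lies in the image of $-\tensor \mathbb{Z}[-]$, so that the spectrum structure maps preserve the levelwise abelian group structure. This can be arranged either by constructing the stable frame explicitly from the $\ch(\mathbb{Z})$-action on each $X_n$, or more abstractly by transferring the $\h\mathbb{Z}$-module structure through the uniqueness of stable frames (Lemma \ref{Ederivedfunctor}).
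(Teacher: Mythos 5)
Your proposal is correct and follows essentially the same route as the paper: the $\ch(\mathbb{Z})$--enrichment makes $\Map(X,Y)$ a spectrum of simplicial abelian groups, and such spectra are known to split as products of Eilenberg--MacLane spectra (the paper cites \cite[Proposition III.2.20]{GJ99} and \cite[Section 2.6]{DugShi07} for this). The additional care you take over choosing the (stable) frame compatibly with the abelian group structure is exactly the detail the paper leaves implicit, so your write-up is a fuller version of the same argument rather than a different one.
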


\begin{proof}
Because of the enrichment over chain complexes, $\Map(X,Y)$ is not only a spectrum of simplicial sets, but also a spectrum of simplicial abelian groups. It is known that these are products of Eilenberg--MacLane spectra, see e.g. \cite[Proposition III.2.20]{GJ99} or \cite[Section 2.6]{DugShi07}. More specifically,
\[
\Map(X,Y) \simeq \prod\limits_{n \ge 0} \h([X,Y]^\C_n) = \prod\limits_{n\ge 0} \h(H_*(\C(X,Y))).
\]
\end{proof}

In recent work, Guti{\'e}rrez computed the Bousfield localisation of Eilenberg MacLane--spectra with respect to important homology theories $E$ \cite{Gut10}.

\begin{theorem}[Guti{\'e}rrez]
Let $G$ be an abelian group. Then 
\begin{itemize}
\item $L_{K(n)} \h G \simeq * $ for $n \ge 1$
\item $L_{E(n)} \h G \simeq L_{\h\mathbb{Q}} \h G$ for all $n$
\end{itemize}
\end{theorem}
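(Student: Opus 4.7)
The plan is to reduce both statements to cyclic $G$, prove the first statement directly by a chromatic input, and then deduce the second from the first by induction on $n$ via the chromatic fracture square. Since $\h(-)$ sends filtered colimits of abelian groups to homotopy colimits of spectra, and smash products of spectra commute with homotopy colimits, every $\h G$ is a homotopy colimit of spectra $\h G'$ with $G'$ finitely generated; so it is enough to treat finitely generated $G$. A finitely generated abelian group is a finite direct sum of cyclic groups and $\h$ converts direct sums to wedges, reducing the problem to $G = \mathbb{Z}$, $G = \mathbb{Z}/p^k$ and $G = \mathbb{Z}/m$ with $\gcd(m,p) = 1$.

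For the first claim I want to show $K(n) \spectrasmash \h G \simeq \ast$ in each of these cases. For $G = \mathbb{Z}/m$ with $\gcd(m,p) = 1$, multiplication by $m$ is simultaneously zero on $\h G$ and invertible on $K(n)$ (as $m$ is a unit in $\mathbb{F}_p[v_n^{\pm 1}] = \pi_* K(n)$), forcing $K(n) \spectrasmash \h G$ to be contractible. The essential case is $G = \mathbb{F}_p$: let $k(n)$ denote the connective Morava $K$--theory, so that $\pi_* k(n) = \mathbb{F}_p[v_n]$. The cofibre of $v_n \co \Sigma^{2(p^n-1)} k(n) \to k(n)$ has homotopy $\mathbb{F}_p$ concentrated in degree zero, so $k(n)/v_n \simeq \h \mathbb{F}_p$; smashing the resulting cofibre sequence with $K(n) = k(n)[v_n^{-1}]$ makes $v_n$ a self-equivalence of $K(n) \spectrasmash k(n)$, and so $K(n) \spectrasmash \h \mathbb{F}_p \simeq \ast$. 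The case $G = \mathbb{Z}/p^k$ follows by induction on $k$ using the cofibre sequence $\h \mathbb{Z}/p \to \h \mathbb{Z}/p^k \to \h \mathbb{Z}/p^{k-1}$, and $G = \mathbb{Z}$ is handled by the cofibre sequence $\h \mathbb{Z} \xrightarrow{p} \h \mathbb{Z} \to \h \mathbb{F}_p$ together with the fact that $p = 0$ in $K(n)_*$ (so the left map is null after smashing with $K(n)$).

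For the second claim I would apply the chromatic (arithmetic) fracture square
\[
L_{E(n)} X \simeq L_{E(n-1)} X \times^h_{L_{E(n-1)} L_{K(n)} X} L_{K(n)} X
\]
and induct on $n$, with $n = 0$ being the definitional identity $E(0) = \h \mathbb{Q}$. In the inductive step take $X = \h G$: the first claim gives $L_{K(n)} \h G \simeq \ast$ and hence $L_{E(n-1)} L_{K(n)} \h G \simeq \ast$, while by the inductive hypothesis $L_{E(n-1)} \h G \simeq L_{\h \mathbb{Q}} \h G$. The homotopy pullback of $L_{\h \mathbb{Q}} \h G \to \ast \leftarrow \ast$ is just $L_{\h \mathbb{Q}} \h G$, as required.

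The main obstacle is the chromatic input $k(n)/v_n \simeq \h \mathbb{F}_p$ and its use to kill $K(n) \spectrasmash \h \mathbb{F}_p$; everything else is a formal manipulation of cofibre sequences, coproducts and the fracture square. An alternative route avoiding $k(n)$ would replace this step by a Steenrod-algebra calculation of $K(n)^*(\h \mathbb{F}_p)$, which is noticeably more involved.
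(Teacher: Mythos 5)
The paper does not actually prove this statement: it is imported verbatim from Guti\'errez \cite{Gut10}, and the first bullet point is essentially the classical theorem (going back to \cite{Rav84}) that Eilenberg--MacLane spectra are dissonant. So your proposal is not competing with an argument in the text. Judged on its own, its architecture is sound and is the standard one: the reduction to finitely generated and then cyclic groups via filtered colimits and wedges, the $\gcd(m,p)=1$ case, the induction over $\mathbb{Z}/p^k$, the cofibre sequence $\h\mathbb{Z}\xrightarrow{p}\h\mathbb{Z}\to\h\mathbb{F}_p$, and the deduction of the second bullet from the first via the chromatic fracture square with base case $E(0)=\h\mathbb{Q}$ are all correct. (For the second bullet the reduction to cyclic groups is not even needed, since the fracture argument applies to $\h G$ directly once $L_{K(n)}\h G\simeq\ast$ is known for all $G$.)

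The one step you should not present as formal is the claim that smashing with $K(n)=k(n)[v_n^{-1}]$ ``makes $v_n$ a self-equivalence of $K(n)\wedge k(n)$''. Forming $K(n)$ inverts $v_n$ acting through the \emph{left} smash factor, whereas the map whose cofibre is $K(n)\wedge\h\mathbb{F}_p$ is $1\wedge v_n$, i.e.\ $v_n$ acting through the \emph{right} factor. On $\pi_*(K(n)\wedge k(n))=K(n)_*k(n)$ these are multiplication by $\eta_L(v_n)$ and $\eta_R(v_n)$ respectively, and inverting the former does not formally invert the latter. What rescues the argument is the classical invariance statement $\eta_R(v_n)\equiv v_n \bmod I_n=(p,v_1,\dots,v_{n-1})$, which forces $\eta_R(v_n)=\eta_L(v_n)$ in $K(n)_*k(n)$; since $1\wedge v_n$ is then a $\pi_*$--isomorphism of $K(n)$--modules, it is an equivalence and its cofibre $K(n)\wedge\h\mathbb{F}_p$ is contractible. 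With that one genuinely non-formal input supplied (a one-line citation), your proof is complete; the alternative you mention, computing $K(n)^*(\h\mathbb{F}_p)$ via the Steenrod algebra, is indeed the more laborious route.
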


\begin{corollary}
There are no algebraic stably $K(n)$--familiar model categories for $n \ge 1$.
\end{corollary}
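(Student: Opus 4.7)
The plan is to show that if $\C$ is an algebraic model category which is stably $K(n)$-familiar with $n \geq 1$, then $\Ho(\C)$ must be trivial. The argument combines three ingredients already assembled in the excerpt: Theorem \ref{localmappingspectrum}, which forces every $\Map(X,Y)$ to be $K(n)$-local; Lemma \ref{lem:EMLmapping}, which identifies $\Map(X,Y)$ stably with $\prod_{k \geq 0} \h([X,Y]^\C_k)$; and Guti{\'e}rrez's theorem, which says $L_{K(n)} \h G \simeq *$ for every abelian group $G$ whenever $n \geq 1$.

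First I would fix a cofibrant--fibrant $X \in \C$ and consider the mapping spectrum $\Map(X,X)$. By Theorem \ref{localmappingspectrum} this spectrum is $K(n)$-local, while Lemma \ref{lem:EMLmapping} provides a stable equivalence
\[
\Map(X,X) \simeq \prod_{k \geq 0} \h([X,X]^\C_k).
\]
The crucial step is the elementary observation that in any pointed category with products each factor is canonically a retract of the whole product, via the section whose chosen coordinate is the identity and whose remaining coordinates are the zero map. Applied in the stable homotopy category, this exhibits each $\h([X,X]^\C_k)$ as a retract of the $K(n)$-local spectrum $\Map(X,X)$, and since any retract of a $K(n)$-local spectrum is itself $K(n)$-local, each such factor is $K(n)$-local.

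Next, Guti{\'e}rrez's theorem says that each $\h([X,X]^\C_k)$ is $K(n)$-acyclic. A spectrum that is simultaneously $K(n)$-local and $K(n)$-acyclic is contractible, so $\h([X,X]^\C_k) \simeq *$, and hence $[X,X]^\C_k = 0$ for every $k$. Taking $k=0$ makes the identity of $X$ zero in $\Ho(\C)$, forcing $X \simeq 0$. Since $X$ was an arbitrary cofibrant--fibrant object, every object of $\Ho(\C)$ is weakly zero, which is the content of the corollary.

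I do not foresee a genuine obstacle. The retract observation is just the universal property of products in a pointed category, and the rest is formal use of the machinery already in place. The only mild care needed is to work throughout in the stable homotopy category so that the splitting of Lemma \ref{lem:EMLmapping} really does supply the sections invoked in the retract step.
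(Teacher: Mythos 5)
Your proof is correct and follows exactly the route the paper intends, namely combining Theorem \ref{localmappingspectrum}, Lemma \ref{lem:EMLmapping} and Guti\'errez's computation; the paper simply leaves these steps to the reader. Your retract step is a worthwhile precaution rather than a detour, since an infinite product of $K(n)$--acyclic spectra need not be $K(n)$--acyclic, so one really does need to pass to the individual Eilenberg--MacLane factors before invoking ``local and acyclic implies trivial.''
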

\qed

\begin{corollary}
Let $\C$ be an algebraic model category. Then $\C$ is stably $E(n)$--familiar if and only if $\C$ is $\h\mathbb{Q}$--familiar, i.e. rational. 
\end{corollary}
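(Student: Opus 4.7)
The plan is to combine the characterisation of stable $E$--familiarity by $E$--locality of mapping spectra (Theorem \ref{localmappingspectrum}) with the very special form of mapping spectra in an algebraic model category (Lemma \ref{lem:EMLmapping}), and then invoke Guti\'errez's theorem to compare locality at $E(n)$ with locality at $\h\mathbb{Q}$ one Eilenberg--MacLane factor at a time.

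First I note that an algebraic model category is automatically stable, so the corollary following Lemma \ref{lem:HRfamiliarity} identifies ``stably $\h\mathbb{Q}$--familiar'' with ``$\h\mathbb{Q}$--familiar''; this takes care of the ``i.e.\ rational'' clarification. Applying Theorem \ref{localmappingspectrum} to the homology theories $E(n)$ and $\h\mathbb{Q}$, both sides of the claimed equivalence translate into statements about mapping spectra: $\C$ is stably $E(n)$--familiar iff every $R\Map(X,Y)$ is $E(n)$--local, and $\C$ is stably $\h\mathbb{Q}$--familiar iff every $R\Map(X,Y)$ is $\h\mathbb{Q}$--local. So it suffices to show that a mapping spectrum in an algebraic model category is $E(n)$--local if and only if it is $\h\mathbb{Q}$--local.

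By Lemma \ref{lem:EMLmapping} every such mapping spectrum is a product of Eilenberg--MacLane spectra
\[
R\Map(X,Y) \simeq \prod_{k\geq 0} \h([X,Y]^\C_k).
\]
For any homology theory $F$ a product of $F$--local spectra is again $F$--local, since $F$--locality is characterised by mapping-out from $F$--acyclics and products preserve this right orthogonality (equivalently, fibrant objects in $L_F\Sp$ are closed under products). Thus the problem reduces to a single factor: is $\h G$ an $E(n)$--local spectrum if and only if $\h G$ is an $\h\mathbb{Q}$--local spectrum?

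This last point is now immediate from Guti\'errez's theorem, which gives $L_{E(n)}\h G \simeq L_{\h\mathbb{Q}}\h G$ for every abelian group $G$. If $\h G$ is $E(n)$--local, the localisation map $\h G \to L_{E(n)}\h G$ is a weak equivalence, and composing with the Guti\'errez equivalence shows that $\h G \to L_{\h\mathbb{Q}}\h G$ is a weak equivalence, so $\h G$ is $\h\mathbb{Q}$--local. The converse is symmetric. Stringing the equivalences together proves the corollary. The main obstacle is essentially bookkeeping rather than computation: one must be careful that locality passes through arbitrary products and that Guti\'errez's comparison genuinely implies the two localisations coincide as subcategories of $\Ho(\Sp)$, but both are routine given the tools assembled in the paper.
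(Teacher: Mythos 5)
Your argument is correct and is exactly the one the paper intends: the corollary is stated with no written proof precisely because it follows from Theorem \ref{localmappingspectrum}, Lemma \ref{lem:EMLmapping}, Guti\'errez's theorem, and the corollary to Lemma \ref{lem:HRfamiliarity} in the way you describe. Your explicit remarks that products of local spectra are local and that stable $\h\mathbb{Q}$--familiarity reduces to $\h\mathbb{Q}$--familiarity for the (automatically stable) algebraic $\C$ fill in the bookkeeping the paper leaves implicit.
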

\qed

From this we can conclude immediately that $L_{E(n)}\Sp$ and $L_{K(n)}\Sp$ are not algebraic for $n \ge 1$. But the computations of Guti{\'e}rrez reach even further, allowing us to classify algebraic $L_E \Sp$ for all $E$.

\begin{theorem}\label{thm:algrational}
The category of $E$--local spectra $L_E \Sp$ is algebraic if and only if $L_E=L_{\h\mathbb{Q}}$.
\end{theorem}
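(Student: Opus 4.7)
The plan is to prove each direction separately. For the backward direction, that $L_E = L_{\h\mathbb{Q}}$ implies $L_E \Sp$ is algebraic, I would invoke the classical Quillen equivalence between $L_{\h\mathbb{Q}}\Sp$ and the model category $\ch(\mathbb{Q})$ of rational chain complexes. Since $\ch(\mathbb{Q})$ is manifestly a $\ch(\mathbb{Z})$--model category via restriction of scalars along the ring map $\mathbb{Z} \to \mathbb{Q}$, it is algebraic, and this property transfers through the Quillen equivalence to $L_{\h\mathbb{Q}}\Sp$.

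For the forward direction, suppose $L_E\Sp$ is algebraic. First observe that $L_E\Sp$ is trivially an $L_E\mathcal{S}^\Sigma$--model category over itself, hence stably $E$--familiar by the earlier lemma. Next apply Lemma \ref{lem:EMLmapping}: since $L_E\Sp$ is algebraic, every derived mapping spectrum in $L_E\Sp$ is a generalized Eilenberg--MacLane spectrum (GEM). Taking $X = Y = L_E\mathbb{S}$, cofibrantly--fibrantly replaced, the endomorphism spectrum $R\Map(L_E\mathbb{S}, L_E\mathbb{S})$ is equivalent to $L_E\mathbb{S}$ itself, so the $E$--local sphere is a GEM, say $L_E\mathbb{S} \simeq \prod_n \Sigma^n \h A_n$.

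The next stage is to show that this forces $L_E\mathbb{S} \simeq \h\mathbb{Q}$. Each summand $\h A_n$ is a retract of the $E$--local spectrum $L_E\mathbb{S}$ and hence is itself $E$--local. Using the more general results of Guti\'{e}rrez from \cite{Gut10}, which classify $L_E \h G$ for arbitrary $E$ and abelian $G$ beyond the $E(n)$ and $K(n)$ cases stated in the excerpt, together with the constraints coming from the ring structure on $L_E\mathbb{S}$ and from the unit map $\mathbb{Z} \to \pi_0(L_E\mathbb{S})$, one concludes that each $A_n$ must vanish for $n > 0$ and that $A_0 = \mathbb{Q}$. Hence $L_E\mathbb{S} \simeq \h\mathbb{Q}$. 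From here, $L_E\mathbb{S} \simeq \h\mathbb{Q}$ implies $E \spectrasmash \h\mathbb{Q} \simeq E$ (by applying $E \spectrasmash -$ to the $E$--equivalence $\mathbb{S} \to L_E\mathbb{S}$), so $E$ is a rational spectrum with Bousfield class equal to that of $\h\mathbb{Q}$, and therefore $L_E = L_{\h\mathbb{Q}}$.

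The main obstacle is the deduction $L_E\mathbb{S} \simeq \h\mathbb{Q}$ from the GEM condition: it requires engaging with the full structural content of Guti\'{e}rrez's work, which goes beyond the explicit theorem stated in the excerpt. For the cases $E = E(n)$ and $E = K(n)$ with $n\ge 1$, this step is immediate from the stated theorem, since then any $E$--local Eilenberg--MacLane spectrum is forced to be rational (or trivial in the Morava case); the general $E$ requires the broader computations of \cite{Gut10} applied to both the unit $\h A_0$ and the higher pieces $\h A_n$ of the GEM.
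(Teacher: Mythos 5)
Your overall skeleton matches the paper's: the backward direction rests on Shipley's Quillen equivalence between $L_{\h\mathbb{Q}}\Sp$ and $\ch(\mathbb{Q})$, and the forward direction rests on Lemma \ref{lem:EMLmapping} together with Guti\'errez's computations. Your concluding step (smashing $E$ against the $E$--equivalence $\mathbb{S}\to L_E\mathbb{S}\simeq\h\mathbb{Q}$ to see that $E$ is rational, hence a module over $\h\mathbb{Q}$ and Bousfield equivalent to it) is correct and, if anything, slightly slicker than the paper's cofibre-and-Moore-spectrum argument.

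The problem is the step you yourself flag as ``the main obstacle'': passing from ``$L_E\mathbb{S}$ is a GEM'' to ``$L_E\mathbb{S}\simeq\h\mathbb{Q}$''. You defer this to ``the full structural content'' of \cite{Gut10} plus unspecified constraints from the ring structure, but a GEM can perfectly well have homotopy in infinitely many degrees, and Guti\'errez's results classify localisations of Eilenberg--MacLane spectra, not which GEMs can occur as local spheres; nothing you have written excludes, say, $\pi_*(L_E\mathbb{S})$ being a graded ring such as $\mathbb{Q}[x]$ with $|x|>0$. The paper closes exactly this gap with a short concrete argument you are missing: for any $X,Y$ the action of $\alpha\in\pi_n(L_E\mathbb{S})$ on $[X,Y]^{L_E\Sp}_*$ is identified, via adjunction, with an action on the homotopy groups of $L_E R\Map(X,Y)$, which is again a GEM by \cite[Corollary 4.2]{Gut10}; elements of nonzero degree act trivially on homotopy groups of a GEM, so taking $X=Y=L_E\mathbb{S}$ and using unitality ($\alpha=\id\circ\alpha$) forces $\pi_n(L_E\mathbb{S})=0$ for $n\neq 0$. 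Only then does \cite[Theorem 3.5]{Gut10} enter, to show that the resulting localisation must be $P$--localisation or $P$--completion, and since neither the $P$--complete nor the $P$--local sphere has homotopy concentrated in degree zero except rationally, $\pi_0(L_E\mathbb{S})=\mathbb{Q}$. You need to supply this unitality argument (or an equivalent one) explicitly; as written, the forward direction does not go through.
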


\begin{proof}
Let $\alpha \in \pi_*(L_E \mathbb{S})$, $X,Y \in L_E \Sp$. Then, by adjunction, the following diagram commutes.
\[
\xymatrix{ [\mathbb{S},L_E R\Map(X,Y)]^{\Sp} \ar[r]^{\cong} \ar[d]_{-\circ\alpha}& [\mathbb{S}, R\Map(X,Y)]^{L_E\Sp} \ar[r]^{\,\,\,\,\,\,\,\cong}_{adj}\ar[d]_{-\circ\alpha}& [X,Y]^{L_E \Sp} \ar[d]_{\alpha \sframe^L -}\\
[\mathbb{S},L_E R\Map(X,Y)]^{\Sp} \ar[r]^{\cong} & [\mathbb{S}, R\Map(X,Y)]^{L_E\Sp} \ar[r]^{\,\,\,\,\,\,\,\cong}_{adj}& [X,Y]^{L_E \Sp}
}
\]
By Lemma \ref{lem:EMLmapping}, $R\Map(X,Y)$ is a product of Eilenberg--MacLane spectra. By \cite[Corollary 4.2]{Gut10}, the $E$--localisation of an Eilenberg--MacLane spectrum is again a product of Eilenberg--MacLane spectra. So by degree reasons, precomposition with $\alpha$ in the above diagram is trivial unless $\alpha$ is in degree zero.

In particular, this is true for $X=Y=L_E \mathbb{S}$. Remembering that the action of homotopy groups is unital, this implies that $[\mathbb{S},\mathbb{S}]^{L_E \Sp}$ is concentrated in degree zero only. This means that 
\[
L_E \mathbb{S} =\h R \,\,\,\,\mbox{for some} \,\,\,R.
\]
As localisation is idempotent, 
\[
L_E \h R = \h R.
\]

By \cite[Theorem 3.5]{Gut10} we further have
\[
L_E \h R = L_{\h G} \h R
\]
where $G$ is either $\mathbb{Z}/P$ or $\mathbb{Z}_{P}$ for some set of primes $P$. Thus, in our case localisation is either $P$--localisation or $P$--completion. But we know that the $P$--complete sphere can never have its homotopy concentrated in one degree. And it is the same for the $P$--local sphere unless $P$--completion is rationalisation, leaving us with the only possible case $R = \mathbb{Q}$. 

Now it is only left to prove that if $L_E \mathbb{S} = \h\mathbb{Q}$, then $L_E$ is rationalisation. We know that $S \longrightarrow \h\mathbb{Q}$ is an $E$--equivalence. Consequently, the cofibre of this map, $C$, is $E$--acyclic. If we consider the cofibre sequence
\[
M(n) \longrightarrow C \stackrel{\cdot n}{\longrightarrow} C
\]
where $M(n)$ is the mod--$n$ Moore spectrum, we see that $M(n)$ must also be $E$--acyclic. Putting this into the cofibre sequence
\[
E \stackrel{\cdot n}{\longrightarrow} E \longrightarrow E \spectrasmash M(n) \simeq *
\]
we see that multiplication by $n$ is an isomorphism on $E$ for all $n$, hence $E$ must be rational. 

If a spectrum $E$ is rational, then it is a module spectrum over the rational sphere $\h\mathbb{Q}$. However, module spectra over Eilenberg--MacLane spectra are again Eilenberg--MacLane spectra. This means that $E$ is a wedge of shifts of products of $\h\mathbb{Q}$ and consequently, $L_E=L_{\h\mathbb{Q}}$. 
\end{proof}

This result fits in nicely
with the long-known result that $\Ho (L_{\h \mathbb{Q}} \Sp)$ is equivalent to the
derived category of rational chain complexes. This statement was improved in 
\cite{Shi07} which proves that $L_{\h \mathbb{Q}} \Sp$ is Quillen equivalent to 
 $\ch(\mathbb{Q})$. 

\bigskip
We now have a good understanding of how $\Ho(L_E \Sp)$ can act on the homotopy category of
a stable model category and we have related this to actions of $\Ho(L_E \sset)$.
When $L_E$ is smashing we applied this knowledge to questions of rigidity and proven a 
uniqueness statement for $\Ho (L_E \Sp)$. Finally we have studied actions of 
$\Ho(L_E \Sp)$ on algebraic model categories and seen that this action always passes
through $\Ho(L_{\h \mathbb{Q}} \Sp)$.

\end{document}